\newtheorem{thm}{Theorem}[section]
\newtheorem{cor}[thm]{Corollary}
\newtheorem{lem}[thm]{Lemma}
\newtheorem{prop}[thm]{Proposition}
\theoremstyle{definition}
\newtheorem{defn}[thm]{Definition}
\theoremstyle{remark}
\newtheorem{rem}[thm]{Remark}
\numberwithin{equation}{section}
\newcommand*{\cat}[1]{\mathbf{#1}} 
\newcommand*{\mto}{\rightarrow} 
\newcommand*{\del}{\partial} 
\newcommand*{\comp}{\circ} 
\newcommand*{\Int}{\mathbb{Z}} 
\newcommand*{\Rat}{\mathbb{Q}} 
\newcommand*{\cmplx}[1]{{#1}^\bullet} 
\newcommand*{\tensor}{\otimes} 
\newcommand*{\ctensor}{\hat{\otimes}} 
\newcommand*{\isomorph}{\cong} 
\newcommand*{\op}{\mathrm{op}} 
\newcommand{\Kspace}{\mathbb{K}} 
\newcommand{\id}{\mathrm{id}} 
\newcommand{\SQab}{\mathcal{SQ}_1^{\mathrm{ab}}} 
\DeclareMathOperator{\HF}{H} 
\DeclareMathOperator{\Ann}{Ann} 
\DeclareMathOperator{\im}{im} 
\DeclareMathOperator{\KTh}{K} 
\begin{document}


\title[Localisation Sequence]{On a Localisation Sequence for the $\KTh$-Theory of Skew Power Series Rings}
\author{Malte Witte}%

\address{Malte Witte\newline Ruprecht-Karls-Universit\"at Heidelberg\newline
Mathematisches Institut\newline
Im Neuenheimer Feld 288\newline
D-69120 Heidelberg }%
\email{witte@mathi.uni-heidelberg.de}%

\subjclass{16W60 (16E20, 11R23)}

\date{\today}%

\begin{abstract}
Let $B=A[[t;\sigma,\delta]]$ be a skew power series ring such that $\sigma$ is given by an inner automorphism of $B$. We show that a certain Waldhausen localisation sequence involving the $\KTh$-theory of $B$ splits into short split exact sequences. In the case that $A$ is noetherian we show that this sequence is given by the localisation sequence for a left denominator set $S$ in $B$. If $B=\Int_p[[G]]$ happens to be the Iwasawa algebra of a $p$-adic Lie group $G\isomorph H\rtimes \Int_p$, this set $S$ is Venjakob's canonical Ore set. In particular, our result implies that
$$
0\mto \KTh_{n+1}(\Int_p[[G]])\mto \KTh_{n+1}(\Int_p[[G]]_S)\mto \KTh_n(\Int_p[[G]],\Int_p[[G]]_S)\mto 0
$$
is split exact for each $n\geq 0$. We also prove the corresponding result for the localisation of $\Int_p[[G]][\frac{1}{p}]$ with respect to the Ore set $S^*$. Both sequences play a major role in non-commutative Iwasawa theory.
\end{abstract}

\maketitle

\section*{Introduction}

Let $B=A[[t;\sigma,\delta]]$ be a skew power series rings and assume that $\sigma$ extends to an inner automorphism of $B$. In this article, we exploit an observation first made by Schneider and Venjakob \cite{SchnVen:DimThSPSR}, namely the existence of a short exact sequence
$$
(*)\qquad 0\mto B\ctensor_A M\mto B\ctensor_A M\mto M\mto 0
$$
for a pseudocompact $B$-module $M$. (In \emph{loc. cit.}, $M$ was also assumed to be finitely generated over $A$, but we will show below that this is not necessary.) This was used --- first in the context of non-commutative Iwasawa theory by Burns \cite{Burns:AlgebraicLfunctions}, then in general by Schneider and Venjakob \cite{SchnVen:LocCompSPSR} --- to construct a natural splitting of the boundary map
$$
\KTh_{1}(B_S)\xrightarrow{\del_0} \KTh_0(B,B_S)
$$
in the $\KTh$-theory localisation sequence of a regular noetherian skew power series ring $B$ and its localisation $B_S$ with respect to a certain left denominator set $S$.

Generalising this result, we show that by an application of Waldhausen's additivity theorem \cite{Wal:AlgKTheo} to the above exact sequence and by a well-known result in homotopy theory one obtains a natural splitting of the boundary map $\del_n$ for every $n$. Using Waldhausen's construction of $\KTh$-theory for arbitrary Waldhausen categories also helps us to extend this result to non-regular skew power series and even to situations where no suitable left denominator set $S$ is known to exist.

The main application of our result lies in the area of non-commutative Iwasawa theory. Let $H$ be a $p$-adic Lie group and $G$ a semi-direct product of $H$ and $\Int_p$. Fix a topological generator $\gamma$ of $\Int_p$. Then $B=\Int_p[[G]]$ is a skew power series ring over $A=\Int_p[[H]]$ with $\sigma$ given by conjugation with $\gamma$, $t=\gamma-1$ and $\delta=\sigma-\id$. The set $S$ is Venjakob's canonical Ore set and the sequence
$$
\KTh_1(\Int_p[[G]])\mto\KTh_1(\Int_p[[G]]_S)\xrightarrow{\del_0}\KTh_0(\Int_p[[G]],\Int_p[[G]]_S)\mto 0
$$
is used to formulate the main conjecture of non-commutative Iwasawa theory (see \cite{CFKSV}, \cite{RW:EquivIwaTh1}, \cite{FK:CNCIT}, and subsequent articles). By our result, the leftmost map in this sequence is always injective and the splitting of $\del_0$ does exist even if $G$ has $p$-torsion elements. It was part of the proof of the non-commutative main conjecture for totally real fields \cite{Kakde2} to verify this fact for Galois groups of admissible $p$-adic Lie extensions.

The authors of \cite{CFKSV} also use the localisation sequence for the left denominator set
$$
S^*=\bigcup_{n\geq 0} p^nS.
$$
Using the same tools as above, we can show that the boundary morphism
$$
\KTh_{n+1}(\Int_p[[G]]_{S^*})\mto\KTh_n(\Int_p[[G]][\frac{1}{p}],\Int_p[[G]]_{S^*})
$$
splits as well for every $n\geq 0$.

In \cite{SchnVen:DimThSPSR} the coefficient ring $A$ of the skew power series ring $B$ is always assumed to be noetherian. This restriction excludes important examples such as the completed group ring of the absolute Galois group of a global field. Therefore, we will work in a much larger generality by only assuming that $A$ is pseudocompact as $A$-$A$-bimodule. For the construction of the left denominator set $S$ we cannot dispose of the noetherian condition, but we can considerably weaken the extra conditions required in \cite{SchnVen:LocCompSPSR}.

In those cases where a suitable $S$ is not known to exist, we obtain a split exact sequence
$$
0\mto\KTh_{n+1}(B)\mto\KTh_{n+1}(\cat{TSP}_A(B))\mto\KTh_{n}(\cat{TSP}^A(B))\mto 0
$$
for certain Waldhausen categories $\cat{TSP}_A(B)$ and $\cat{TSP}^A(B)$. For applications in Iwasawa theory, this is still a perfectly sensible construction (see e.\,g.\ \cite{Witte:MCVarFF}).

The article is structured as follows. In Section~\ref{sec:pseudocompact modules} we recall properties of pseudocompact modules and of the completed tensor product. For the latter, some extra care is needed because we do not assume that our rings are pseudocompact as modules over a commutative subring as it is done in \cite{Brumer:PseudocompactAlgebras}. In Section~\ref{sec:localisation sequence} we introduce skew power series ring, prove the existence of the short exact sequence $(*)$ and then the splitting of the localisation sequence in terms of the Waldhausen categories $\cat{TSP}_A(B)$ and $\cat{TSP}^A(B)$. We then introduce the set $S$ and prove the Ore condition in the noetherian case. The $\KTh$-groups of the Waldhausen categories may then be replaced by $\KTh_n(B_S)$ and $\KTh_n(B,B_S)$. In Section~\ref{sec:Applications} we consider the special case of completed group rings. In this section, we also prove our result for the localisation with respect to $S_*$.

All rings will be associative with identity. The opposite ring of a ring $R$ will be denoted by $R^{\op}$. An $R$-module is always understood to be a left unitary $R$-module.

The author would like to thank A.~Schmidt and O.~Venjakob for their advice and support.

\section{Preliminaries on Pseudocompact Modules}\label{sec:pseudocompact modules}

Pseudocompact modules have been considered in \cite{Gab:CatAb} and \cite{Brumer:PseudocompactAlgebras}. At some points, it is also convenient to work in the larger category of pro-discrete modules.

\subsection{First Properties}\label{ss:first properties}

\begin{defn}\label{defn:pseudocompact}
Let $A$ be a topological ring and let $M$ be a topological $A$-module.
\begin{enumerate}
    \item The module $M$ is \emph{pro-discrete} if
    $$
        M=\varprojlim_{i\in I}M_i
    $$
    is the limit of a filtered system of discrete $A$-modules $M_i$.
    \item The module $M$ is \emph{pseudocompact} if in addition the $A$-modules $M_i$ are of finite length.
\end{enumerate}
\end{defn}

For any topological $A$-module $M$, we will denote by $\mathcal{U}_M$ the directed set of open submodules of $M$.

\begin{prop}\label{prop:char of LC PC}
Let $M$ be a topological $A$-module.
\begin{enumerate}
    \item $M$ is pro-discrete if and only if for each $U\in\mathcal{U}_M$, $M/U$ is a discrete $A$-module and
    $$
    M\mto\varprojlim_{U\in\mathcal{U}_M} M/U
    $$
    is a topological isomorphism.
    \item $M$ is pseudocompact if and only if it is pro-discrete and for each $U\in\mathcal{U}_M$, $M/U$ has finite length.
\end{enumerate}
\end{prop}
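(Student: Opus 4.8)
The plan is to prove both equivalences by a direct comparison of the two inverse systems $\{M_i\}_{i\in I}$ and $\{M/U\}_{U\in\mathcal{U}_M}$, showing that the latter is cofinal in the former in a suitable sense. First I would treat the ``only if'' direction of (1). Suppose $M=\varprojlim_{i\in I}M_i$ with each $M_i$ discrete. Since the topology on $M$ is the subspace topology from $\prod_i M_i$, a basis of open neighbourhoods of $0$ is given by the kernels $K_i$ of the projections $M\to M_i$ (or finite intersections thereof, but these are again of this form up to refining $I$, which we may do by passing to finite subsets of $I$ directed by inclusion). Each $K_i$ is an open submodule, so $K_i\in\mathcal{U}_M$; conversely any $U\in\mathcal{U}_M$ contains some $K_i$ because the $K_i$ form a neighbourhood basis of $0$. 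Hence the $K_i$ are cofinal in $\mathcal{U}_M$. Since $M/K_i$ embeds into the discrete module $M_i$ as a submodule, $M/K_i$ is discrete, and by cofinality $M/U$ is discrete for every $U\in\mathcal{U}_M$ as well. Moreover the cofinality of $\{K_i\}$ in $\mathcal{U}_M$ gives a canonical isomorphism of topological modules $\varprojlim_i M/K_i\cong\varprojlim_{U\in\mathcal{U}_M} M/U$, and the natural map $M\to\varprojlim_i M/K_i$ is a topological isomorphism (injective because $\bigcap_i K_i=0$ by the Hausdorff property built into ``topological module'' in this setting, surjective and open by completeness of $M$, which is forced by $M=\varprojlim M_i$). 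Composing gives the desired topological isomorphism $M\xrightarrow{\sim}\varprojlim_{U\in\mathcal{U}_M}M/U$.

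The ``if'' direction of (1) is immediate: $\mathcal{U}_M$ is a directed set (the intersection of two open submodules is an open submodule), the modules $M/U$ are discrete by hypothesis, and $M$ is by hypothesis the filtered limit of this system, so $M$ is pro-discrete by Definition~\ref{defn:pseudocompact}(1).

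For part (2), the ``if'' direction is again immediate from the definition. For ``only if'', assume $M$ is pseudocompact, so $M=\varprojlim_{i\in I}M_i$ with each $M_i$ of finite length. By part (1), $M$ is pro-discrete and $M\cong\varprojlim_{U\in\mathcal{U}_M}M/U$, and we saw above that the submodules $K_i=\ker(M\to M_i)$ are cofinal in $\mathcal{U}_M$. Each $M/K_i$ is a submodule of the finite-length module $M_i$, hence of finite length. Now let $U\in\mathcal{U}_M$ be arbitrary; by cofinality there is some $K_i\subseteq U$, and then $M/U$ is a quotient of $M/K_i$, hence of finite length. This completes the proof.

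The only genuine subtlety — the step I would expect to require the most care — is the cofinality claim: that every open submodule $U$ of $M=\varprojlim_i M_i$ contains one of the canonical kernels $K_i$, together with the reduction from arbitrary finite intersections of the $K_i$ back to a single $K_i$ by enlarging the index category to its finite subsets. This rests on the precise meaning of the limit topology and on $I$ being filtered; once it is in place, everything else is formal manipulation of cofinal subsystems of inverse limits. (If the paper's conventions do not build Hausdorffness into ``topological $A$-module'', one should instead note that $\varprojlim_i M_i$ is automatically separated, so no generality is lost.)
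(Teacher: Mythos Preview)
Your proposal is correct and follows essentially the same route as the paper: both arguments show that the kernels $K_i=\ker(M\to M_i)$ are cofinal in $\mathcal{U}_M$, that $M/K_i\hookrightarrow M_i$ inherits discreteness (resp.\ finite length), and then pass to arbitrary $U\in\mathcal{U}_M$ by taking quotients. The only difference is in the last step for part~(1): where you invoke completeness of $M$ to get $M\xrightarrow{\sim}\varprojlim_i M/K_i$, the paper instead observes that in the composition
\[
M\xrightarrow{\alpha}\varprojlim_{U\in\mathcal{U}_M}M/U=\varprojlim_i M/K_i\xrightarrow{\beta}\varprojlim_i M_i
\]
the outer map is the given isomorphism and $\beta$ is injective, whence $\alpha$ is a topological isomorphism --- a slightly more self-contained way to reach the same conclusion.
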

\begin{proof}
Clearly the given conditions are sufficient. To prove necessity, consider
$$
M=\varprojlim_{i\in I} M_i
$$
with $M_i$ discrete and let $\phi_i\colon M\mto M_i$ denote the structure morphisms. The $\ker \phi_i$ form a cofinal system in $\mathcal{U}_M$ and for each $i$, the induced homomorphism $M/\ker \phi_i\mto M_i$ is injective and $M/\ker \phi_i$ is a discrete $A$-module. It has finite length if $M_i$ does. This is then also true for all $U\in\mathcal{U}_M$ containing $\ker \phi_i$. Since the composition
$$
M\xrightarrow{\alpha}\varprojlim_{U\in\mathcal{U}_M}M/U=\varprojlim_{i\in I}M/\ker \phi_i \xrightarrow{\beta} \varprojlim_{i\in I}M_i
$$
is a topological isomorphism and $\beta$ is injective, $\alpha$ is a topological isomorphism as well.
\end{proof}

\begin{rem}
The proposition shows that a topological $A$-module $M$ is pro-discrete precisely if it is linearly topologised, separated, and complete.
\end{rem}

We will denote the category of pro-discrete $A$-modules (with continuous homomorphisms) by $\cat{PD}(A)$, the category of pseudocompact $A$-modules by $\cat{PC}(A)$. The category $\cat{PD}(A)$ is additive and
has arbitrary limits, but is not abelian. The full additive subcategory $\cat{PC}(A)$ is much better behaved.

\begin{prop}\label{prop:ab cat PC}
The category $\cat{PC}(A)$ is abelian and has arbitrary exact products as well as exact filtered limits.
\end{prop}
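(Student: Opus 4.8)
The plan is to deduce everything from the abelian structure of the category of discrete $A$-modules by a limit argument, exactly as one does for pseudocompact modules over a commutative ring. First I would observe that the discrete $A$-modules of finite length form an abelian category $\cat{fl}(A)$: it is closed under submodules, quotients, and finite direct sums inside the abelian category of all discrete $A$-modules, and kernels and cokernels computed there stay of finite length. Then $\cat{PC}(A)$ should be identified with the pro-category, or more concretely with the category of filtered limits $\varprojlim_{i} M_i$ of objects of $\cat{fl}(A)$; by Proposition~\ref{prop:char of LC PC}(2) every pseudocompact module is canonically such a limit, indexed by $\mathcal U_M$, and a continuous homomorphism $f\colon M\to N$ is determined by the induced compatible system $M/f^{-1}(V)\to N/V$ for $V\in\mathcal U_N$. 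This is the bookkeeping that makes the rest formal.

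Next I would construct kernels and cokernels explicitly. Given $f\colon M\to N$ with $M=\varprojlim_U M/U$, $N=\varprojlim_V N/V$, the kernel is $\ker f$ with the subspace topology, which is again pro-discrete with open submodules $\ker f\cap U$; each $\ker f/(\ker f\cap U)$ embeds in $M/U$, hence has finite length, so $\ker f\in\cat{PC}(A)$. For the cokernel one sets $\coker f=\varprojlim_V N/(V+\overline{\im f})$; the key point to check is that $\overline{\im f}$ is closed (automatic) and that this quotient is the categorical cokernel in $\cat{PC}(A)$, which follows because any continuous map out of $N$ killing $\im f$ also kills its closure since the target is separated. The crucial verification is then that $\coim f=M/\ker f\to\ker(\coker f)=\overline{\im f}$ is an isomorphism of topological modules: it is a continuous bijection between pseudocompact modules, and here I would invoke that a continuous bijection of pseudocompact modules is a homeomorphism (the standard compactness-type argument via the Mittag--Leffler / completeness property established in Proposition~\ref{prop:char of LC PC}, since $M/\ker f\cong\varprojlim_U M/(U+\ker f)$ maps isomorphically onto the limit defining $\overline{\im f}$). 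This identification is the main obstacle: one must be careful that passing to closures does not lose information, i.e.\ that $\varprojlim$ of the finite-length quotients really reconstructs $\overline{\im f}$, which uses that each $M/(U+\ker f)$ is Artinian so the images of $\im f$ stabilise and the Mittag--Leffler condition holds.

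For exactness of products: an arbitrary product $\prod_j M_j$ in $\cat{PC}(A)$ is the limit over finite subsets $F$ and open $U_j\subseteq M_j$ of the finite-length modules $\prod_{j\in F} M_j/U_j$, hence pseudocompact, and since limits and kernels are computed levelwise while filtered colimits of finite-length discrete modules are exact, one checks that $\prod_j$ preserves short exact sequences; exactness is tested by the levelwise description and the fact that a cofiltered limit of surjections of finite-length (hence Artinian, Mittag--Leffler) modules is surjective. For exact filtered limits the argument is the same: a filtered limit $\varprojlim_\lambda M_\lambda$ of pseudocompact modules is pseudocompact, and exactness of such limits again reduces to the Mittag--Leffler property of cofiltered systems of Artinian modules. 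I would close by remarking that additivity and the existence of all limits were already noted for $\cat{PD}(A)$, so only abelianness and the two exactness statements genuinely need proof.
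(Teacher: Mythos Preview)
The paper's own proof is a one-line citation to \cite[\S 5, Thm.~3]{Gab:CatAb}, together with the remark that Gabriel's argument nowhere requires $A$ to be pseudocompact as a left $A$-module. Your proposal is essentially a reconstruction of Gabriel's proof: identify $\cat{PC}(A)$ with the pro-category of finite-length discrete $A$-modules, build kernels and cokernels by hand, and use the Mittag--Leffler property of cofiltered Artinian systems for the coimage-to-image isomorphism and the two exactness claims. Your sketch does not use left pseudocompactness of $A$ anywhere, so it is consistent with the paper's one substantive observation; in that sense you have simply unpacked the citation.

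One point of exposition is worth tightening. You assert that $M/\ker f \to \overline{\im f}$ is a continuous bijection and then invoke Mittag--Leffler, but a priori the algebraic image of this map is $\im f$, not its closure, so the map is not obviously surjective. The actual content of the Mittag--Leffler step you cite is that $\im f$ is already closed in $N$ (so $\im f = \overline{\im f}$) and that the quotient topology from $M$ agrees with the subspace topology from $N$ on it. You have the right tool and the right target; the argument just needs to be reordered so that closedness of $\im f$ is the conclusion rather than an implicit assumption.
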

\begin{proof}
See  \cite[\S 5, Thm. 3]{Gab:CatAb} and note that the proof does not need $A$ to be pseudocompact as left $A$-module.
\end{proof}

\subsection{The Completed Tensor Product}\label{ss:completed tensor product}

\begin{defn}\label{defn:completed tensor}
Let $A$ be a topological ring. For $M$ in $\cat{PD}(A^{\op})$ and $N$ in $\cat{PD}(A)$, we define the \emph{completed tensor product}
$$
M\ctensor_A N=\varprojlim_{U\in\mathcal{U}_M}\varprojlim_{V\in\mathcal{U}_N}M/U\tensor_A N/V
$$
where we give $M/U\tensor_A N/V$ the discrete topology.
\end{defn}

\begin{prop}\label{prop:univ prop tensor}
The completed tensor product $M\ctensor_A N$ satisfies the following universal property.
\begin{enumerate}
\item $M\ctensor_A N$ is pro-discrete as $\Int$-module (with respect to the discrete topology on $\Int$) and the canonical map
    $$
    M\times N\mto M\ctensor_A N,\qquad (m,n)\mapsto m\ctensor n,
    $$
    is continuous and bilinear.
\item For any pro-discrete $\Int$-module $T$ and any continuous and bilinear map $f\colon M\times N\mto T$, there exists a unique continuous homomorphism
$$
\bar{f}\colon M\ctensor_A N\mto T
$$
with $\bar{f}(m\ctensor n)=f(m,n)$.
\end{enumerate}
\end{prop}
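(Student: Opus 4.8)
The plan is to deduce both parts directly from the description of $M\ctensor_A N$ as an inverse limit, isolating the one delicate input, namely that a continuous bilinear map into a discrete module factors through some finite stage $M/U\otimes_A N/V$. For part~(1), I would first observe that $M\ctensor_A N=\varprojlim_{(U,V)\in\mathcal U_M\times\mathcal U_N}M/U\otimes_A N/V$, the iterated limit coinciding with the limit over the directed product; as each $M/U\otimes_A N/V$ is discrete, $M\ctensor_A N$ is the limit of a filtered system of discrete $\Int$-modules and hence pro-discrete by Definition~\ref{defn:pseudocompact}. I would then define $m\ctensor n$ to be the element of the limit with $p_{U,V}(m\ctensor n)=\bar m\otimes\bar n$ for all $(U,V)$, where $p_{U,V}$ denotes the projection to $M/U\otimes_A N/V$; compatibility of these components under the transition maps is immediate, and bilinearity (indeed $A$-balancedness) of $(m,n)\mapsto m\ctensor n$ follows from that of each $\bar m\otimes\bar n$. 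Continuity is then checked on the basic neighbourhoods $\ker p_{U,V}$ of $0$: if $m-m'\in U$ and $n-n'\in V$ then $\bar m\otimes\bar n=\bar m'\otimes\bar n'$, so the canonical map sends $(m+U)\times(n+V)$ into $m\ctensor n+\ker p_{U,V}$.

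For part~(2), uniqueness is formal: the elements $m\ctensor n$ generate a subgroup which surjects onto every $M/U\otimes_A N/V$, hence a dense subgroup, and $T$, being pro-discrete, is separated; so a continuous homomorphism $M\ctensor_A N\to T$ is determined by the prescription $\bar f(m\ctensor n)=f(m,n)$. For existence I would write $T=\varprojlim_{W\in\mathcal U_T}T/W$ and, for each $W$, consider the composite $f_W\colon M\times N\xrightarrow{f}T\to T/W$, which is continuous, bilinear, $A$-balanced and has discrete target. The crucial claim is that $f_W$ then factors as $M\times N\to M/U\times N/V\to M/U\otimes_A N/V\xrightarrow{\bar f_W}T/W$ for suitable $U\in\mathcal U_M$ and $V\in\mathcal U_N$. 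Granting this, $\bar f_W\circ p_{U,V}$ is a continuous homomorphism $M\ctensor_A N\to T/W$ which is independent of the chosen $U,V$ and compatible as $W$ shrinks, so passing to the limit over $\mathcal U_T$ yields the desired continuous $\bar f\colon M\ctensor_A N\to T$.

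The hard part will be precisely this factorisation of $f_W$. Continuity of $f$ at the origin together with bilinearity gives only some $U_0\in\mathcal U_M$, $V_0\in\mathcal U_N$ with $f_W(U_0\times V_0)=0$; one must upgrade this to the separate vanishing $f_W(U\times N)=0=f_W(M\times V)$, whereupon $f_W$ is constant on the cosets $(m+U)\times\{n\}$ and $\{m\}\times(n+V)$ and, being $A$-balanced, descends to $M/U\otimes_A N/V$. When $M$ and $N$ are pseudocompact this upgrade is routine: $M/U_0$ is then finitely generated over $A$, say by the images of $m_1,\dots,m_r$, and intersecting $V_0$ with the open kernels of the $A$-linear maps $n\mapsto f_W(m_i,n)$ yields a $V$ with $f_W(M\times V)=0$, and symmetrically for $U$. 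In the general pro-discrete setting this is exactly the point where, as signalled in the introduction, extra care is needed and where one must use the precise sense in which a bilinear map between such linearly topologised modules is continuous.
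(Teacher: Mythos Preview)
Your argument is essentially the one in Brumer's paper (which is all the paper's own proof cites), and it is correct in the pseudocompact setting with one small repair: the maps $n\mapsto f_W(m_i,n)$ are additive and continuous but not $A$-linear, since $T$ carries no $A$-module structure; intersecting their open kernels with $V_0$ therefore yields only an open subgroup. However, since the open $A$-submodules of a pro-discrete $A$-module form a neighbourhood basis of $0$, you may shrink further to an open $A$-submodule $V$, and then for $u\in U_0$, $v\in V$ and $m=\sum_i m_ia_i+u$ one has $f_W(m,v)=\sum_i f_W(m_i,a_iv)=0$ because each $a_iv$ remains in $V$. The symmetric step likewise requires $N$ to be pseudocompact.

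Your instinct that the general pro-discrete case demands more than routine care is correct, but the obstruction is not merely technical: the proposition is in fact false in that generality. Take $A=\Int$ with the discrete topology, $M=\prod_{k\geq 1}\Int$ with the product topology, $N=\bigoplus_{k\geq 1}\Int$ discrete, $T=\Int$ discrete, and $f(m,n)=\sum_k m_kn_k$ (a finite sum since $n$ has finite support). This $f$ is continuous and bilinear, yet $f(\,\cdot\,,e_j)$ is the $j$-th coordinate projection on $M$, so no open $U\subset M$ satisfies $f(U\times N)=0$. One computes $M\ctensor_{\Int}N=\varprojlim_{F\text{ finite}}\Int^{F}\otimes N\cong\prod_{k\geq 1}N$, the canonical map being $(m,n)\mapsto(m_kn)_{k}$; any continuous homomorphism $\bar f\colon\prod_k N\to\Int$ must factor through finitely many coordinates, contradicting $\bar f\big((\delta_{jk}e_j)_k\big)=f(e_j,e_j)=1$ for all $j$. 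Hence no $\bar f$ exists. The paper only invokes the universal property when both factors are pseudocompact (in the proof of Proposition~\ref{prop:the exact sequence}), so this does not affect the downstream results.
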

\begin{proof}
This follows easily from the universal property of the usual tensor product, see \cite[Sect.~2]{Brumer:PseudocompactAlgebras}.
\end{proof}

\begin{prop}\label{prop:commuting with filtered limits}
Let $M$ be a pro-discrete $A^{\op}$-module, let $N$ be the limit of a filtered system $(N_i)_{i\in I}$ of pro-discrete $A$-modules, and assume that all structure maps $\phi_i\colon N\mto N_i$ are surjective. Then
$$
M\ctensor_A \varprojlim_{i\in I}N_i\mto \varprojlim_{i\in I} M\ctensor_A N_i
$$
is a topological isomorphism. In particular, the completed tensor product commutes with arbitrary products of pro-discrete $A$-modules and with filtered limits of pseudocompact $A$-modules.
\end{prop}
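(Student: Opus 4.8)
The plan is to reduce the claim to the defining formula for $\ctensor_A$ together with one cofinality observation about open submodules. First I would analyse the topology of $N=\varprojlim_{i\in I}N_i$: it carries the initial topology for the structure maps $\phi_i\colon N\to N_i$, so a fundamental system of open submodules of $N$ is given by the finite intersections $\bigcap_{j=1}^{k}\phi_{i_j}^{-1}(V_j)$ with $V_j\in\mathcal{U}_{N_{i_j}}$. Since $I$ is filtered I can pick an index $i_0$ dominating $i_1,\dots,i_k$ and write $\phi_{i_j}=\psi_j\circ\phi_{i_0}$ for the transition maps $\psi_j\colon N_{i_0}\to N_{i_j}$; then $\bigcap_{j}\phi_{i_j}^{-1}(V_j)=\phi_{i_0}^{-1}\bigl(\bigcap_{j}\psi_j^{-1}(V_j)\bigr)$, and $\bigcap_j\psi_j^{-1}(V_j)\in\mathcal{U}_{N_{i_0}}$. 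Hence the family $\{\,\phi_i^{-1}(V):i\in I,\ V\in\mathcal{U}_{N_i}\,\}$ is cofinal in $\mathcal{U}_N$. This is precisely the step that uses the hypothesis on $I$; surjectivity of the $\phi_i$ is not yet needed.

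Next I would run the computation. Using the cofinal family above and $\varprojlim_{V\in\mathcal{U}_{N_i}}M/U\tensor_A N_i/V=M/U\ctensor_A N_i$ (the module $M/U$ being discrete), one gets
\[
M\ctensor_A N
=\varprojlim_{U\in\mathcal{U}_M}\ \varprojlim_{V\in\mathcal{U}_N}M/U\tensor_A N/V
=\varprojlim_{U\in\mathcal{U}_M}\ \varprojlim_{(i,V)}M/U\tensor_A \bigl(N/\phi_i^{-1}(V)\bigr),
\]
where the second inverse limit runs over the pairs $(i,V)$ with $i\in I$ and $V\in\mathcal{U}_{N_i}$, ordered in the evident way. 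Here surjectivity of $\phi_i$ enters: it yields $N/\phi_i^{-1}(V)\cong N_i/V$. Since this index category fibres over $I$ with fibre $\mathcal{U}_{N_i}$ over $i$, the inner limit decomposes as $\varprojlim_{i\in I}\varprojlim_{V\in\mathcal{U}_{N_i}}M/U\tensor_A N_i/V=\varprojlim_{i\in I}(M/U\ctensor_A N_i)$. Interchanging the two commuting inverse limits over $\mathcal{U}_M$ and over $I$ and again using $\varprojlim_{U}(M/U\ctensor_A N_i)=M\ctensor_A N_i$ one arrives at $M\ctensor_A N\cong\varprojlim_{i\in I}M\ctensor_A N_i$. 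Finally one checks that this chain of canonical identifications is the comparison map of the statement and that, since all the isomorphisms above respect the respective limit topologies, it is a topological isomorphism.

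For the two supplementary assertions I would argue as follows. For an arbitrary product $\prod_{k}N_k$ of pro-discrete modules, first record the elementary isomorphism $M\ctensor_A(N_1\times N_2)\cong(M\ctensor_A N_1)\times(M\ctensor_A N_2)$ — the submodules $V_1\times V_2$ are cofinal in $\mathcal{U}_{N_1\times N_2}$ and $\tensor_A$ commutes with finite direct sums — and then write $\prod_{k}N_k=\varprojlim_{F}\prod_{k\in F}N_k$ as the filtered limit over the finite subsets $F$, whose transition maps are the surjective projections, so that the first part applies. For a filtered limit of pseudocompact modules with not necessarily surjective structure maps, I would pass to the submodules $N_i''=\phi_i(N)\subseteq N_i$; these are pseudocompact, hence closed, because $N=\varprojlim_i N_i$ is itself pseudocompact by Proposition~\ref{prop:ab cat PC} and $N_i''$ is a quotient of it. The system $(N_i'')_{i\in I}$ has surjective transition maps and the same limit $N$, and using the exactness of filtered limits on $\cat{PC}(A)$ (Proposition~\ref{prop:ab cat PC} again) one sees that replacing $(N_i)$ by $(N_i'')$ changes neither side, which reduces the assertion to the case already treated.

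I expect the main obstacle to be the bookkeeping in the second paragraph rather than a single hard idea: making the reindexing along the cofinal family precise, justifying the Fubini-type interchange of the nested inverse limits, and — the point one must not skip — verifying that the composite isomorphism really is the canonical map $M\ctensor_A\varprojlim_i N_i\to\varprojlim_i M\ctensor_A N_i$ of the proposition and that it is a homeomorphism, not merely an algebraic isomorphism. The cofinality statement of the first paragraph is the only genuinely topological input, and it is exactly where filteredness of $I$ is used.
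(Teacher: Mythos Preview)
Your treatment of the main assertion (under the surjectivity hypothesis) and of products is correct and is essentially the paper's argument spelled out in more detail: identify the cofinal family $\{\phi_i^{-1}(V)\}$ in $\mathcal{U}_N$, use surjectivity of $\phi_i$ to get $N/\phi_i^{-1}(V)\cong N_i/V$, and unwind the definition of $\ctensor_A$. The paper compresses your second paragraph into a single displayed line, but the content is the same; likewise for the finite-subsets trick for products.

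Where you diverge from the paper is the last supplement, on filtered limits of pseudocompact modules, and here your argument has a gap. The paper simply asserts, invoking Proposition~\ref{prop:ab cat PC}, that the $\phi_i$ are automatically surjective when all $N_i$ are pseudocompact; this is not literally true (take $I=\{1\le 2\}$, $N_2=0$, $N_1\neq 0$), so the paper is at best elliptical here. Your idea of passing to the images $N_i''=\phi_i(N)$ is the natural repair, but your closing sentence does not go through as written. You need $\varprojlim_i M\ctensor_A N_i''\cong\varprojlim_i M\ctensor_A N_i$, and ``exactness of filtered limits in $\cat{PC}(A)$'' does not deliver this: the modules $M\ctensor_A N_i$ are a priori only pro-discrete $\Int$-modules (since $M$ is merely pro-discrete, cf.\ Proposition~\ref{prop:existence base change}), so Proposition~\ref{prop:ab cat PC} is not even applicable to them. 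Even granting exactness, you would still need $\varprojlim_i M\ctensor_A(N_i/N_i'')=0$, and deducing this from $\varprojlim_i(N_i/N_i'')=0$ is precisely an instance of the commutation you are trying to prove. So the reduction is circular at this step, and some genuinely new input (a Mittag--Leffler-type argument inside $\cat{PC}(A)$, or a direct cofinality argument at the level of the discrete quotients $N_i/V$) is needed to close it.
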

\begin{proof}
For each $U\in \mathcal{U}_{N_i}$, the map $\phi_{i,U}\colon N\mto N_i/U$ is surjective and the set
$\{\ker \phi_{i,U}\;\mid\;\text{$i\in I$, $U\in\mathcal{U}_{N_i}$}\}$
is a cofinal subset of $\mathcal{U}_N$. Hence,
$$
M\ctensor_A N=\varprojlim_{i\in I}\varprojlim_{V\in\mathcal{U}_{N_i}}\varprojlim_{U\in\mathcal{U}_M}M/U\tensor_A N_i/V=\varprojlim_{i\in I}M\ctensor_A N_i
$$
as claimed.

Finally, note that every direct product over a set $I$ may be rewritten as the limit of the products over finite subsets of $I$ and that by Prop.~\ref{prop:ab cat PC} the maps $\phi_i$ are surjective if all $N_i$ are pseudocompact.
\end{proof}

\begin{defn}\label{defn:bimodules}
Let $A$ and $B$ be topological rings and $M$ a topological $A$-$B$-bimodule, i.\,e.\ a topological abelian group with compatible topological $A$-module and $B^{\op}$-module structures. We call $M$
\begin{enumerate}
\item \emph{pro-discrete} if $M$ is pro-discrete as $A$-module and there exists a fundamental system of neighbourhoods of $0$ consisting of open $B^\op$-submodules,
\item \emph{pseudocompact} if $M$ is pseudocompact both as $A$-module and as $B^{\op}$-module.
\end{enumerate}
\end{defn}

Note that a pro-discrete $A$-$B$-bimodule $M$ is automatically pro-discrete as $B^{\op}$-module. Indeed, any open $A$-submodule $U$ of $M$ contains an open $B^{\op}$-submodule $V$. Hence, it also contains
$$
\overline{\sum_{a\in A}aV},
$$
which is an open $A$-$B$-subbimodule of $M$. In particular,
$$
M\isomorph\varprojlim_{U\in\mathcal{U}'_M}M/U
$$
where $\mathcal{U}'_M$ denotes the filtered set of open subbimodules of $M$. The bimodule $M$ is pseudocompact precisely if it is pro-discrete and for each $U\in\mathcal{U}'_M$, $M/U$ is of finite length as $A$-module and noetherian as $B^{\op}$-module \cite[Thm.~8.12]{GW:NoncommNoethRings}.

\begin{prop}\label{prop:existence base change}
Let $A$ and $B$ be topological rings and $M$ a pro-discrete $A$-$B$-bimodule.
\begin{enumerate}
\item If $N$ is a pro-discrete $B$-module, then $M\ctensor_B N$ is a pro-discrete $A$-module.
\item If $M$ is pseudocompact as $A$-module and $N$ is a pseudocompact $B$-module, then $M\ctensor_B N$ is a pseudocompact $A$-module.
\end{enumerate}
\end{prop}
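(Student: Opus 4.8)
The plan is to read off both statements from the defining double limit
$$
M\ctensor_B N=\varprojlim_{U}\ \varprojlim_{V\in\mathcal{U}_N}\ M/U\tensor_B N/V ,
$$
after arranging that the outer limit runs over open \emph{sub-bimodules} of $M$. First I would note that, $M$ being a pro-discrete $A$-$B$-bimodule, the set $\mathcal{U}'_M$ of open sub-bimodules is cofinal among the open $B^{\op}$-submodules of $M$: an open $B^{\op}$-submodule contains an open $A$-submodule because $M$ is linearly topologised over $A$, and an open $A$-submodule $U$ contains the open sub-bimodule $\overline{\sum_{a\in A}aV}$ for any open $B^{\op}$-submodule $V\subseteq U$, exactly as recalled before Proposition~\ref{prop:existence base change}. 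Consequently the outer limit above may be taken over $U\in\mathcal{U}'_M$, and then each $M/U$ is a discrete $A$-$B$-bimodule, so that the whole expression is manifestly a limit of $A$-modules. (If one prefers, Proposition~\ref{prop:commuting with filtered limits} applied in the first variable rewrites this as $M\ctensor_B N=\varprojlim_{U\in\mathcal{U}'_M}(M/U)\ctensor_B N$ and one argues termwise; the two routes are equally short.)

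For part (1) it then suffices to check that, for $U\in\mathcal{U}'_M$ and $V\in\mathcal{U}_N$, the abelian group $M/U\tensor_B N/V$ with its discrete topology is a discrete $A$-module. Here the bimodule hypothesis enters: $M/U$ is a discrete $A$-module, so the annihilator in $A$ of any of its elements is an open left ideal; an arbitrary element of $M/U\tensor_B N/V$ is a finite sum of elementary tensors $\bar m\tensor\bar n$, and its annihilator contains the finite intersection of the corresponding open left ideals $\Ann_A(\bar m)$, hence is open. Thus $M\ctensor_B N$ is a filtered limit of discrete $A$-modules, i.e. pro-discrete as an $A$-module.

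For part (2) I would additionally invoke the two finiteness inputs. Since $M$ is pseudocompact over $A$ and $\mathcal{U}'_M\subseteq\mathcal{U}_M$, each $M/U$ has finite length as an $A$-module; since $N$ is pseudocompact over $B$, each $N/V$ has finite length, in particular is finitely generated, as a $B$-module. Picking a $B$-linear surjection $B^{\oplus k}\qto N/V$ and applying $M/U\tensor_B-$ yields an $A$-linear surjection $(M/U)^{\oplus k}\qto M/U\tensor_B N/V$, so the target has finite length over $A$. Hence $M\ctensor_B N$ is a filtered limit of finite-length discrete $A$-modules, and by Proposition~\ref{prop:ab cat PC} such a limit is again pseudocompact.

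The only real subtlety is the first step: one must make sure the $A$-module structure on $M\ctensor_B N$ is exhibited by a limit over $A$-modules and not merely over $B^{\op}$-modules, which forces the passage to open sub-bimodules and the cofinality check above. Everything after that is the two one-line observations — openness of annihilators of tensors, and finite generation of $N/V$ — together with the stability of $\cat{PD}(A)$, resp. $\cat{PC}(A)$, under filtered limits.
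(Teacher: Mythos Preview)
Your proof is correct and follows essentially the same route as the paper's: reduce via the cofinal system of open sub-bimodules to the discrete case, then verify that annihilators of elements in $M/U\tensor_B N/V$ are open via the inclusions $\Ann_A m\subset\Ann_A(m\tensor n)$ and $\Ann_A x\cap\Ann_A y\subset\Ann_A(x+y)$, and for part~(2) use that $M/U$ has finite $A$-length while $N/V$ is finitely generated over $B$. The only difference is that you spell out the cofinality reduction in more detail, whereas the paper compresses it to the single phrase ``one reduces to the case that $M$ and $N$ carry the discrete topology.''
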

\begin{proof}
One reduces to the case that $M$ and $N$ carry the discrete topology. For the first assertion, one then needs to verify that $M\tensor_B N$ with its discrete topology is a topological $A$-module. This is the case precisely if the annihilator of each element is open in $A$.  The relations
$$
\Ann_A m\subset\Ann_A m\tensor n \qquad\text{and}\qquad \Ann_A x\cap \Ann_A y\subset \Ann_A x+y
$$
imply that this condition is satisfied.

Further, if $M$ is of finite length as $A$-module and $N$ is a finitely generated $B$-module, then $M\tensor_B N$ is again of finite length. Hence, the second assertion follows.
\end{proof}

The completed tensor product is in general neither left exact nor right exact, but we obtain right exactness in the following situation.

\begin{prop}\label{prop:tensor right exact}
Let $A$ and $B$ be topological rings and $M$ be a pro-discrete $A$-$B$-bimodule which is pseudocompact as $A$-module. Then the functor
$$
\cat{PC}(B)\mto\cat{PC}(A),\qquad N\mapsto M\ctensor_B N,
$$
is right exact.
\end{prop}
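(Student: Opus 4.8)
The plan is to reduce to the discrete case, as in the proof of Proposition~\ref{prop:existence base change}, and then deduce right exactness from the right exactness of the ordinary tensor product together with the exactness properties of filtered limits in $\cat{PC}(A)$ recorded in Proposition~\ref{prop:ab cat PC}. Concretely, suppose
$$
N'\mto N\mto N''\mto 0
$$
is an exact sequence in $\cat{PC}(B)$. I would first show that applying $M\ctensor_B-$ yields a complex $M\ctensor_B N'\mto M\ctensor_B N\mto M\ctensor_B N''\mto 0$, which is clear from functoriality, and that it is exact at $M\ctensor_B N''$ and at $M\ctensor_B N$.

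For the surjectivity of $M\ctensor_B N\mto M\ctensor_B N''$ and the exactness in the middle, I would write $M=\varprojlim_{U}M/U$ over open sub\-bimodules $U\in\mathcal{U}'_M$ (using that $M$ is pro-discrete as a bimodule, hence $M/U$ is of finite length as an $A$-module and noetherian as a $B^\op$-module). By Proposition~\ref{prop:commuting with filtered limits}, $M\ctensor_B N=\varprojlim_U (M/U)\ctensor_B N$, and similarly for $N'$ and $N''$; and for fixed $U$, since $M/U$ is discrete, $(M/U)\ctensor_B N=\varprojlim_V (M/U)\tensor_B N/V$ over $V\in\mathcal{U}_N$. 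The key point is then purely algebraic: for a right $B$-module $L$ of finite length and an exact sequence $N'\to N\to N''\to 0$ of pseudocompact $B$-modules, the sequence $L\tensor_B N'\to L\tensor_B N\to L\tensor_B N''\to 0$ of discrete $A$-modules is exact — this is just right exactness of $L\tensor_B-$ applied levelwise to suitable finite-length quotients, compatibly with the limit over $V$. Passing to the limit over $V$ and then over $U$, I invoke Proposition~\ref{prop:ab cat PC}: filtered limits in $\cat{PC}(A)$ are exact, so the limit of the exact sequences $(M/U)\ctensor_B N'\to (M/U)\ctensor_B N\to (M/U)\ctensor_B N''\to 0$ remains exact. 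This gives the desired right exactness of $N\mapsto M\ctensor_B N$.

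The main obstacle I expect is bookkeeping about cofinality of the index sets and making sure that the ``levelwise exactness'' really fits into a \emph{filtered} limit of exact sequences so that Proposition~\ref{prop:ab cat PC} applies: one must choose, for a given $V\in\mathcal{U}_{N}$, compatible open submodules of $N'$ and $N''$ so that $N'/\!\cdot\to N/V\to N''/\!\cdot\to 0$ is exact with all three terms of finite length, and check that such choices form a cofinal directed system in the relevant product of directed sets. Once the diagram of short exact sequences is organized as a genuine filtered system of exact sequences in $\cat{PC}(A)$, the conclusion is immediate from the exactness of filtered limits. I would also note in passing that right exactness can fail without the pseudocompactness hypothesis on $M$, so that hypothesis is used precisely to guarantee that the quotients $M/U$ are of finite length and that all the tensor products in sight land in $\cat{PC}(A)$.
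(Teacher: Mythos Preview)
Your proposal is correct and follows precisely the route the paper sketches in one line (``This follows from the exactness of filtered limits (Prop.~\ref{prop:ab cat PC}), the right exactness of the usual tensor product, and Prop.~\ref{prop:commuting with filtered limits}''); you have simply unpacked that sentence, including the cofinality bookkeeping the paper leaves implicit. One small correction: under the stated hypotheses $M/U$ has finite $A$-length, but it need \emph{not} be noetherian as a $B^{\op}$-module---that conclusion would require $M$ to be pseudocompact as a bimodule, which is not assumed here---though fortunately your argument never actually uses this, since right exactness of $L\tensor_B-$ holds for an arbitrary right $B$-module $L$.
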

\begin{proof}
This follows from the exactness of filtered limits (Prop.~\ref{prop:ab cat PC}), the right exactness of the usual tensor product, and Prop.~\ref{prop:commuting with filtered limits}.
\end{proof}

\subsection{Pseudocompact Rings}\label{ss:pseudocompact rings}

\begin{defn}\label{defn:pseudocompact rings}
The topological ring $A$ is called
\begin{enumerate}
\item \emph{left pseudocompact} if it is pseudocompact as a topological (left) $A$-module,
\item\emph{(left and right) pseudocompact} if it is pseudocompact as a topological $A$-$A$-bimodule.
\end{enumerate}
\end{defn}

Our terminology agrees with the notion of pseudocompact rings in \cite{Brumer:PseudocompactAlgebras}. In \cite{Gab:CatAb}, the term ``pseudocompact ring'' refers to left pseudocompact rings in our terminology. Nevertheless, the following results remain true.

\begin{prop}\label{prop:free modules}
If $A$ is a left pseudocompact ring, then every pseudocompact module is quotient of a direct product of copies of $A$. Furthermore, the projectives in $\cat{PC}(A)$ are the direct summands of arbitrary direct products of copies of $A$.
\end{prop}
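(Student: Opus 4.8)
The plan is to treat the products $P_J:=\prod_{j\in J}A$, which lie in $\cat{PC}(A)$ since $A$ does and products are exact (Prop.~\ref{prop:ab cat PC}), as the ``free'' objects of $\cat{PC}(A)$, and the first step is to pin down their universal property: for $M$ in $\cat{PC}(A)$ a continuous $A$-homomorphism $P_J\to M$ is the same datum as a \emph{summable} family $(m_j)_{j\in J}$ of elements of $M$ --- one for which $m_j\in U$ for all but finitely many $j$, for every $U\in\mathcal{U}_M$ --- the associated homomorphism being $(a_j)_j\mapsto\sum_j a_jm_j$, a sum which converges precisely because of summability. To prove this one writes $\Hom_{\cat{PC}(A)}(P_J,M)=\varprojlim_{U}\Hom_{\cat{PC}(A)}(P_J,M/U)$ and observes, for a finite-length discrete module $N$, that a continuous map $P_J\to N$ has open kernel and hence factors through a finite sub-product $\prod_{j\in J'}A/I_j$ with $J'$ finite and the $I_j$ open left ideals; since $\Hom(A/I_j,N)=\{n:I_jn=0\}$ and these submodules exhaust $N$ (every element of a discrete module has open annihilator), one gets $\Hom_{\cat{PC}(A)}(P_J,N)=N^{(J)}$, the finitely supported families, and $\varprojlim_U(M/U)^{(J)}$ is exactly the set of summable families in $M$.

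With this in hand the first assertion reduces to the statement that every $M$ in $\cat{PC}(A)$ carries a summable family $(m_j)_{j\in J}$ generating it topologically, i.e.\ with $\{m_j\bmod U\}$ generating $M/U$ over $A$ for every $U\in\mathcal{U}_M$; for then the map $P_J\to M$ attached to this family has dense image, and since the categorical image of a morphism in $\cat{PC}(A)$ is a closed submodule of its target, that image is all of $M$, so the map is an epimorphism. To produce such a family I would run the transfinite radical filtration $M=M^0\supseteq M^1\supseteq\cdots$, with $M^{\xi+1}=\mathrm{rad}(M^\xi)$ and $M^\xi=\bigcap_{\eta<\xi}M^\eta$ at limit ordinals; it terminates at $0$ because a nonzero pseudocompact module has a simple discrete quotient, hence proper radical, so the filtration cannot stabilise at a nonzero value. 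Each subquotient $M^\xi/M^{\xi+1}$ has zero radical and is therefore a product $\prod_i S_i$ of simple discrete $A$-modules; picking a generator $s_i$ of each $S_i$, the family whose $i$-th member is $s_i$ sitting in the $i$-th coordinate is summable in $M^\xi/M^{\xi+1}$ and generates it topologically. Lifting all of these elements back to $M^\xi$ and collecting over all $\xi$ yields the candidate family. (The naive construction --- pulling back the closed embedding $M\hookrightarrow\prod_U M/U$ along a product of surjections $A^{n_U}\twoheadrightarrow M/U$ --- only realises $M$ as a quotient of a \emph{closed submodule} of a product of copies of $A$, which is why some device of this kind is needed.)

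The main obstacle is checking that the collected family is still summable, and still generates every $M/U$: the radical filtration of $M$ need not induce the Loewy filtration of an arbitrary quotient $M/U$, and the supports of the partial families could \emph{a priori} accumulate. I would resolve this exactly as in \cite{Gab:CatAb}: the lifts in $M^\xi$ may be chosen so that each newly adjoined generator, after subtracting a suitable element of the submodule spanned by the generators already chosen, lies in the finitely many relevant open submodules, which keeps the whole collected family summable; termination of the filtration then forces topological generation. This argument uses only that $A$ is left pseudocompact.

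Finally, for the statement about projectives: if $Q$ is projective in $\cat{PC}(A)$, choose by the first part an epimorphism $P_J\twoheadrightarrow Q$; projectivity splits it, so $Q$ is a direct summand of $P_J$. Conversely, a direct summand of a projective object is projective, so it suffices to show that each $P_J$ is itself projective, i.e.\ that $\Hom_{\cat{PC}(A)}(P_J,-)$ is exact; by the universal property this functor sends $M$ to the set of summable families in $M$, and the only thing to verify is that summable families lift along an epimorphism $M\twoheadrightarrow N$ in $\cat{PC}(A)$ --- that is, that a topological generating family of $N$ can be lifted compatibly while staying summable, which is the same bookkeeping as above. (The ring $A$ itself is projective for the cheap reason that $\Hom_{\cat{PC}(A)}(A,-)$ is the forgetful functor and hence exact; but an arbitrary product of projective objects need not be projective in an abstract abelian category with exact products, so this last step genuinely requires the summable-lifting argument.)
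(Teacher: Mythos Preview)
The paper gives no proof of its own here; it simply cites \cite[Cor.~1.3, Lemma~1.6]{Brumer:PseudocompactAlgebras}. Your outline is therefore not competing with anything in the paper, but is rather an attempt to reconstruct the Brumer/Gabriel argument that the citation points to, and in broad strokes it does so correctly: the identification of $\Hom_{\cat{PC}(A)}(\prod_J A,M)$ with summable families is right and is indeed the key structural fact, and the reduction of both assertions to statements about summable families is the standard route.

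Two places deserve more care. First, your claim that a pseudocompact module with zero radical is a \emph{product} of simples is true but not automatic --- the embedding $N\hookrightarrow\prod_U N/U$ into a product of simples is clear, but that $N$ is itself such a product (rather than merely a closed submodule of one) needs an argument. Second, the final step, lifting an arbitrary summable family along an epimorphism $\pi\colon M\twoheadrightarrow N$, is not literally ``the same bookkeeping'' as producing a summable generating set: you are now handed a specific family $(n_j)$ and must choose lifts $m_j\in\pi^{-1}(n_j)$ that are simultaneously small. The obstruction is that a naive lift $(m_j^0)$ is only summable modulo $K=\ker\pi$, and correcting by elements of $K$ so that the result is summable in $M$ requires a genuine selection argument (when $\mathcal{U}_M$ has a countable cofinal chain this is easy, but in general one either argues via compactness or, as Gabriel does, passes through the duality with locally finite Grothendieck categories, where the statement becomes ``coproducts of injectives are injective''). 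Your parenthetical remark that this step ``genuinely requires the summable-lifting argument'' is exactly right; just be aware that it is a real argument and not a reprise of the earlier one.
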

\begin{proof}
See \cite[Cor.~1.3, Lemma~1.6]{Brumer:PseudocompactAlgebras}.
\end{proof}

\begin{prop}\label{prop:finitely presented modules}
Let $A$ be a left pseudocompact ring. The forgetful functor from the category of finitely presented, pseudocompact $A$-modules to the category of finitely presented, abstract $A$-modules is an equivalence of categories. Moreover, any abstract homomorphism from a finitely presented  $A$-module to a pseudocompact $A$-module is automatically continuous.
\end{prop}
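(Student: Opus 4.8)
The plan is to establish three things: the forgetful functor $F$ is faithful (obvious), essentially surjective, and full; the supplementary statement about automatic continuity will be proved along the way and, in fact, already contains fullness as the special case in which the target is itself finitely presented. The whole argument rests on one observation. \emph{Basic observation:} any abstract homomorphism of left $A$-modules $g\colon A^{n}\mto N$ into a pseudocompact $A$-module $N$ — indeed into any topological $A$-module — is continuous. Writing $g(x)=\sum_{i}x_{i}g(e_{i})$ reduces this to the case $n=1$, i.e.\ to the continuity of $\mu_{\nu}\colon A\mto N$, $a\mapsto a\nu$, for a fixed $\nu\in N$. Since $\mu_{\nu}$ is a homomorphism of the underlying topological groups, it suffices to check continuity at $0$; and continuity of the action map $A\times N\mto N$ at the point $(0,\nu)$ provides, for every open $U\ni 0$ in $N$, an open left ideal $J\subset A$ with $J\nu\subset U$. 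Consequently every abstract $A$-linear map $A^{m}\mto A^{n}$ is continuous, hence is automatically a morphism of $\cat{PC}(A)$ (finite products of the pseudocompact module $A$ lie in $\cat{PC}(A)$).

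\emph{Essential surjectivity.} Let $M$ be a finitely presented abstract $A$-module, chosen with a presentation $A^{m}\xrightarrow{\phi}A^{n}\to M\to 0$. By the observation $\phi$ is a morphism of $\cat{PC}(A)$, and the key point is that its image $K:=\phi(A^{m})$ is \emph{closed} in $A^{n}$: a pseudocompact module is linearly compact, so the continuous homomorphic image $K$ of the linearly compact module $A^{m}$ is linearly compact, hence a closed submodule of the separated module $A^{n}$; equivalently, every morphism of $\cat{PC}(A)$ is strict, cf.\ \cite{Gab:CatAb}, \cite{Brumer:PseudocompactAlgebras}. Therefore $A^{n}/K$, equipped with the quotient topology, is pseudocompact, the sequence $A^{m}\xrightarrow{\phi}A^{n}\to A^{n}/K\to 0$ is exact in $\cat{PC}(A)$, and $F$ sends $A^{n}/K$ to $M$.

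\emph{Fullness and automatic continuity.} If $M$ is a finitely presented pseudocompact $A$-module, then any finite presentation $A^{m}\to A^{n}\xrightarrow{\pi}M\to 0$ in $\cat{PC}(A)$ realises $\pi$ as a topological quotient map. Hence, for a pseudocompact $A$-module $N$, an abstract homomorphism $g\colon M\mto N$ is continuous as soon as $g\circ\pi\colon A^{n}\mto N$ is, which holds by the observation. This proves the supplementary statement; specialising to a finitely presented $N$ gives fullness, and together with faithfulness and essential surjectivity we conclude that $F$ is an equivalence of categories.

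The only non-formal step, and the one I expect to be the main obstacle, is the closedness of the finitely generated submodule $K=\phi(A^{m})\subset A^{n}$: this is false for general linearly topologised modules and genuinely exploits the linear compactness of pseudocompact modules (so one really must invoke the structure theory of $\cat{PC}(A)$, not merely that it is abelian). Everything else is a routine unwinding of the universal properties of quotient topologies and of cokernels in $\cat{PC}(A)$.
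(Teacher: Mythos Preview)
Your argument is correct: the basic observation that abstract maps $A^n\to N$ are continuous, the closedness of $\phi(A^m)$ via linear compactness (equivalently, strictness of morphisms in $\cat{PC}(A)$), and the fact that any continuous surjection in the abelian category $\cat{PC}(A)$ is a topological quotient map all combine to give essential surjectivity, fullness, and the automatic-continuity supplement exactly as you describe. The paper does not supply its own argument but merely refers to \cite[Prop.~5.2.22]{NSW:CohomNumFields}; your write-up is essentially the standard proof one finds there, so there is nothing substantive to contrast.
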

\begin{proof}
This follows as \cite[Prop.~5.2.22]{NSW:CohomNumFields}.
\end{proof}

For the following result, we must restrict to (left and right) pseudocompact rings.

\begin{prop}\label{prop:comparison of tensor prods}
Let $A$ be a pseudocompact ring.
\begin{enumerate}
\item For any pseudocompact $A$-module $N$, the canonical homomorphism
$$
N\mto A\ctensor_A N
$$
is an isomorphism.
\item If $B$ is a topological ring, $M$ a pseudocompact $B$-$A$-bimodule, and $N$ a finitely presented $A$-module, then the canonical homomorphism
$$
M\tensor_A N\mto M\ctensor_A N
$$
is an isomorphism.
\end{enumerate}
\end{prop}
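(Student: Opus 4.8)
The plan is to prove both assertions by reducing to the discrete case and then invoking the classical facts about ordinary tensor products, much in the spirit of the proofs of Propositions~\ref{prop:commuting with filtered limits} and~\ref{prop:existence base change}. For part~(1), write $N=\varprojlim_{V\in\mathcal{U}_N} N/V$ with each $N/V$ of finite length. Since $A$ is pseudocompact as $A$-$A$-bimodule, the open subbimodules of $A$ are cofinal among the open left submodules, so $A=\varprojlim_{\mathfrak{a}} A/\mathfrak{a}$ with $\mathfrak{a}$ running over open ideals. Using Proposition~\ref{prop:commuting with filtered limits} twice, one gets $A\ctensor_A N=\varprojlim_{\mathfrak a}\varprojlim_V (A/\mathfrak{a})\tensor_A (N/V)$. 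The point is then that for fixed $V$, the finite-length module $N/V$ is annihilated by some open ideal $\mathfrak{a}_0$, so the transition maps $(A/\mathfrak{a})\tensor_A(N/V)\mto (A/\mathfrak{a}_0)\tensor_A(N/V)=N/V$ are isomorphisms for $\mathfrak{a}\subseteq\mathfrak{a}_0$; hence $\varprojlim_{\mathfrak a}(A/\mathfrak a)\tensor_A (N/V)=N/V$, and taking the limit over $V$ recovers $N$. One must also check that this identification agrees with the canonical map $N\mto A\ctensor_A N$, which is immediate from the universal property in Proposition~\ref{prop:univ prop tensor}.

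For part~(2), since $N$ is finitely presented, pick a presentation $A^m\mto A^n\mto N\mto 0$ of abstract $A$-modules; by Proposition~\ref{prop:finitely presented modules} this is automatically a presentation in $\cat{PC}(A)$ as well. Applying $M\tensor_A(-)$ and $M\ctensor_A(-)$ and using right exactness of the ordinary tensor product together with right exactness of the completed tensor product (Proposition~\ref{prop:tensor right exact}, which applies because $M$ is pseudocompact, hence pro-discrete, as a $B$-$A$-bimodule and pseudocompact as a $B$-module), I reduce to checking that $M\tensor_A A^n\mto M\ctensor_A A^n$ is an isomorphism, i.e.\ to the case $N=A$. That case is exactly part~(1) read for the $B$-$A$-bimodule $M$ in place of the $A$-module $N$: indeed $M\ctensor_A A=\varprojlim_{U\in\mathcal{U}'_M}\varprojlim_{\mathfrak a}(M/U)\tensor_A(A/\mathfrak a)$, and for each open subbimodule $U$ the quotient $M/U$ is of finite length as an $A$-module, hence killed by an open ideal, so the inner limit collapses to $M/U$ and the outer limit recovers $M$. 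Compatibility with the canonical map is again formal.

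The only genuinely delicate point is bookkeeping with the double limits: one must make sure the cofinality arguments are legitimate, in particular that for a finite-length $A$-module the annihilator is an open ideal (which uses that $A$ is pseudocompact, so that annihilators of elements are open and finite intersections of opens are open), and that commuting the limits is justified by Proposition~\ref{prop:commuting with filtered limits} — this requires the relevant transition maps to be surjective, which holds since all the modules involved are pseudocompact. Everything else is a routine diagram chase using the universal properties already established.
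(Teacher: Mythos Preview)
Your argument is correct. For part~(2) you and the paper proceed identically: choose a finite presentation of $N$, use right exactness on both sides, and reduce to $N=A$, which is part~(1) applied to $M$ as a pseudocompact $A^{\op}$-module. (Minor point: when you write ``$M/U$ is of finite length as an $A$-module'' you mean as a \emph{right} $A$-module; this holds because $M$ is pseudocompact as $A^{\op}$-module and open subbimodules are cofinal among open right $A$-submodules.)

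For part~(1) the routes genuinely differ. The paper picks a presentation $\prod_I A\to\prod_J A\to N\to 0$ via Proposition~\ref{prop:free modules}, then uses right exactness of $A\ctensor_A(-)$ (Proposition~\ref{prop:tensor right exact}) together with compatibility with products (Proposition~\ref{prop:commuting with filtered limits}) to reduce to the trivial case $N=A$. You instead compute the double limit defining $A\ctensor_A N$ directly: each finite-length quotient $N/V$ has open (two-sided) annihilator, so the inner limit over $\mathfrak{a}$ stabilises at $N/V$, and the outer limit recovers $N$. Your approach is more elementary---it bypasses the structure theory of projectives in $\cat{PC}(A)$ and the right-exactness result---while the paper's approach is more categorical and foreshadows how the machinery is used later. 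Note also that the double limit you write down is literally Definition~\ref{defn:completed tensor} (with open two-sided ideals cofinal among open right ideals since $A$ is pseudocompact as a bimodule), so the appeal to Proposition~\ref{prop:commuting with filtered limits} is not strictly needed there.
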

\begin{proof}
For the first assertion we choose a presentation
$$
\prod_{i\in I}A\mto\prod_{j\in J}A\mto N\mto 0
$$
using Prop.~\ref{prop:free modules}. Then we use the right-exactness of the completed tensor product (Prop.~\ref{prop:tensor right exact}) and that the completed tensor product commutes with arbitrary products (Prop.~\ref{prop:commuting with filtered limits}) to reduce to the case $N=A$. In this case, the statement follows immediately from the definition.

The second assertion follows by choosing a finite presentation of $N$ and applying the first assertion to the pseudocompact $A^{\op}$-module $M$. Since $M$ is also pseudocompact as $B$-module, the completed tensor product is again right exact such that
$$
M\tensor_A N\mto M\ctensor_A N
$$
is indeed an isomorphism.
\end{proof}

\begin{rem}
Note that for the second assertion of the proposition, it does not suffice to assume that $N$ is pseudocompact and finitely generated. The proof of \cite[Lemma 2.1.(ii)]{Brumer:PseudocompactAlgebras} is erroneous since the kernel of
$$
M\tensor_A A^n\mto M\tensor_A N
$$
is not necessarily closed.
\end{rem}

\section{The Localisation Sequence}\label{sec:localisation sequence}

\subsection{Skew Power Series Rings}

Let $A$ be a pseudocompact ring with a continuous ring automorphism $\sigma$. Recall that a $\sigma$-derivation on $A$ is a continuous map
$
\delta\colon A\mto A
$ satisfying
$$
\delta(ab)=\sigma(a)\delta(b)+\delta(a)b
$$
for all $a,b\in A$.

\begin{defn}
Let $A$ be a (left and right) pseudocompact ring with a continuous automorphism $\sigma$ and a topologically nilpotent $\sigma$-derivation $\delta$.
Assume that $B$ is a left pseudocompact ring with an element $t\in B$ such that
\begin{enumerate}
\item $B$ contains $A$ as a closed subring,
\item $B$ is pseudocompact as $A^{\op}$-module,
\item as topological $A$-module, $B$ is topologically freely generated by the powers of $t$:
$$B=\prod_{n=0}^{\infty}At^n,$$
\item for any $a\in A$,
$$ta=\sigma(a)t+\delta(a).$$
\end{enumerate}
Then $B=A[[t;\sigma,\delta]]$ will be called a \emph{(left) skew power series ring} over $A$.
\end{defn}

\begin{rem}
A skew power series ring does not exist for every choice of $\sigma$ and $\delta$. If it does exist, it is not clear that it is also a right skew power series ring, i.\,e.\ topologically free over the powers of $t$ as a topological $A^{\op}$-module. A condition that is clearly necessary for the existence and pseudocompactness is that $\delta$ is topologically nilpotent, i.\,e.\ that for each open ideal $I$ of $A$, there exists a positive integer $n$ such that $\delta^n(A)\subset I$. If $A$ is noetherian, it is shown in \cite{SchnVen:DimThSPSR} that a sufficient condition for the existence of a left and right skew power series ring is that $\delta$ is $\sigma$-nilpotent. Analysing the multiplication formula (4) in \cite[Sect.~1]{SchnVen:DimThSPSR} one sees that a sufficient condition for the existence as left skew power series ring is that $\delta$ is topologically nilpotent and that $A$ admits a fundamental system of open neighbourhoods of $0$ consisting of two-sided ideals stable under $\sigma$ and $\delta$. More general definitions of skew power series rings are considered in \cite[\S 7.4.2]{Rueschoff:diplom}.
\end{rem}

\begin{rem}\label{rem:change of variable}
If $B=A[[t;\sigma,\delta]]$ is a left skew power series ring, $a\in A^{\times}$ and $b\in A$, then we may consider $B$ also as skew power series ring $B=A[[t';\sigma',\delta']]$ in $t'=at+b$ with the $A$-automorphism
$$
\sigma'\colon A\mto A,\qquad x\mapsto a\sigma(x)a^{-1},
$$
and the $\sigma'$-derivation
$$
\delta'\colon A\mto A, \qquad x\mapsto a\delta(x)+bx-\sigma'(x)b.
$$
For this, we note that
$$
B\mto B,\qquad \sum_{n=0}^{\infty} f_nt^n\mapsto \sum_{n=0}^{\infty} f_n (t')^n,
$$
is an isomorphism of pseudocompact left $A$-modules and hence, $B$ is also topologically freely generated by the powers of $t'$.
\end{rem}

We fix a skew power series ring $B=A[[t;\sigma,\delta]]$ and let $B^\sigma$ denote the pseudocompact $B$-$A$-bimodule with the $A^\op$-structure given by
$$
a\cdot b=b\sigma(a)
$$
for $a\in A^\op$, $b\in B$.

The following proposition extends \cite[Prop.~2.2]{SchnVen:DimThSPSR}. An important difference is that it applies not only to $B$-modules finitely generated as $A$-modules, but to all pseudocompact $B$-modules.

\begin{prop}\label{prop:the exact sequence}
Let $B$ be a skew power series ring over $A$. For any pseudocompact $B$-module $M$, there exists an exact sequence
of pseudocompact $B$-modules
$$
0\mto B^\sigma\ctensor_A M\xrightarrow{\kappa} B\ctensor_A M\xrightarrow{\mu} M\mto 0
$$
with
$$
\kappa(b\ctensor m)=bt\ctensor m-b\ctensor tm,\qquad \mu(b\ctensor m)=bm.
$$
\end{prop}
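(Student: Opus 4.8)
The plan is to first verify the claim by reduction to the discrete/finite-length case and then bootstrap back up. Since $M$ is pseudocompact, we may write $M = \varprojlim_{U} M/U$ over the open submodules $U \in \mathcal{U}_M$ that are $B$-submodules (these are cofinal, by the bimodule argument after Definition~\ref{defn:bimodules}), and each $M/U$ has finite length over $A$. The completed tensor product commutes with such filtered limits by Proposition~\ref{prop:commuting with filtered limits} (here $B^\sigma$ and $B$ are pseudocompact as $A^{\op}$-modules by hypothesis, so their completed tensor products with pseudocompact $A$-modules behave well, and the transition maps $M \to M/U$ are surjective). Both $B^\sigma\ctensor_A M$ and $B\ctensor_A M$ are pseudocompact $B$-modules by Proposition~\ref{prop:existence base change}. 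So it suffices to prove exactness of the sequence for each $M/U$ and then pass to the limit, using exactness of filtered limits in $\cat{PC}(B)$ (Proposition~\ref{prop:ab cat PC}).

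Thus assume $M$ has finite length over $A$; in particular $M$ is a finitely presented $A$-module, so by Proposition~\ref{prop:comparison of tensor prods}(2) we have $B\ctensor_A M = B\tensor_A M$ and $B^\sigma\ctensor_A M = B^\sigma\tensor_A M$, and everything is now an ordinary algebraic computation with honest tensor products. Surjectivity of $\mu$ is immediate since $\mu(1\ctensor m)=m$. For the rest, I would use the topological $A$-basis $\{t^n\}_{n\geq 0}$ of $B$: as a left $A$-module, $B\ctensor_A M \isomorph \prod_{n\geq 0} t^n \tensor M$ (using that completed tensor commutes with the product $B=\prod_n At^n$, Proposition~\ref{prop:commuting with filtered limits}, and Proposition~\ref{prop:comparison of tensor prods}(1)); similarly $B^\sigma\ctensor_A M \isomorph \prod_{n\geq 0} t^n\tensor M$, but with a twisted identification because of the $\sigma$-action on the right $A$-structure of $B^\sigma$. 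Writing an element of $B\ctensor_A M$ as $\sum_{n\geq 0} t^n\tensor m_n$, one computes $\kappa(t^n\tensor m) = t^{n+1}\tensor m - t^n\tensor tm$, and then expands $tm$ — more precisely, for $a\in A$ one uses $ta = \sigma(a)t+\delta(a)$ to move $t$ past the $A$-coefficients, so that $t\cdot(t^n\tensor m)$ rewrites in terms of the basis. The upshot is that $\kappa$, in basis coordinates, is a "shift minus a lower-triangular operator" whose diagonal part is the shift $t^n\tensor m \mapsto t^{n+1}\tensor m$. Injectivity of $\kappa$ then follows because the leading (lowest-degree) term of a nonzero element is preserved; and $\im\kappa = \ker\mu$ follows because $\mu$ kills exactly $\sum t^n \tensor m_n$ with $\sum t^n m_n$ vanishing — and given such an element one solves recursively for a preimage under $\kappa$, the recursion terminating at each fixed total degree because $\delta$ is topologically nilpotent (so only finitely many $\delta$-corrections contribute modulo any open submodule).

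The main obstacle is bookkeeping the twist and the $\delta$-terms: one must check that the formal recursion defining a preimage under $\kappa$ actually converges in the pseudocompact topology, i.e.\ is well-defined modulo every open $B$-submodule. This is where topological nilpotence of $\delta$ is essential — it guarantees that the "off-diagonal" part of $\kappa$ is topologically nilpotent, so that $\kappa$ differs from the (obviously split-injective with the expected cokernel) shift map by a topologically nilpotent perturbation, and hence has the same kernel and cokernel. I would phrase this cleanly by filtering $M$ by the images of $\delta^k$ or, more robustly, by arguing modulo each $U\in\mathcal{U}_M$ where $\delta$ acts nilpotently, reducing to a genuinely finite triangular computation; then reassemble via the limit. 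Once the finite-length case is done, the limit argument from the first paragraph finishes the proof, and the explicit formulas for $\kappa$ and $\mu$ are visibly compatible with the transition maps, so the exact sequence is natural in $M$.
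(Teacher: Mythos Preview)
Your limit reduction in the first paragraph is sound: open $B$-submodules are cofinal, the quotients $M/U$ are pseudocompact $A$-modules of finite length (since $M$ is pseudocompact as $A$-module and $U$ is in particular an open $A$-submodule), the completed tensor product commutes with these limits, and filtered limits in $\cat{PC}(B)$ are exact. The problem is the second paragraph.

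The step ``$M$ has finite length over $A$, hence is finitely presented over $A$, so $B\ctensor_A M = B\tensor_A M$ by Proposition~\ref{prop:comparison of tensor prods}(2)'' fails precisely in the generality the paper cares about: when $A$ is not noetherian, a simple $A$-module need not be finitely presented (take $A$ local pseudocompact with non-finitely-generated maximal ideal, e.g.\ a power series ring in infinitely many variables over a field). So you cannot pass to ordinary tensor products. Relatedly, your decomposition $B\ctensor_A M \isomorph \prod_n t^n\tensor M$ is not justified: the identity $B=\prod_n At^n$ is a decomposition of $B$ as a \emph{left} $A$-module, whereas in $B\ctensor_A M$ the tensor uses the \emph{right} $A$-structure of $B$, and Proposition~\ref{prop:commuting with filtered limits} only lets you pull products out of the second factor. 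Without this decomposition, the ``leading term preserved'' argument for injectivity of $\kappa$ and the recursive construction of preimages have no footing; and even granting the decomposition, one would still need that $t$ acts nilpotently on every finite-length discrete $B$-module, which you have not argued.

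The paper proceeds quite differently. Exactness at the middle is shown for arbitrary $M$ in one stroke: the $A$-linear section $m\mapsto 1\ctensor m$ of $\mu$ gives a topological $A$-module splitting $B\ctensor_A M = (1\ctensor M)\oplus\ker\mu$, and the projections of the topological generators $t^i\ctensor m$ onto $\ker\mu$ are $t^i\ctensor m-1\ctensor t^im = \sum_{k=0}^{i-1}\kappa(t^{i-1-k}\ctensor t^km)\in\im\kappa$. For injectivity of $\kappa$ the paper goes \emph{up} rather than down: it first treats $M=B$, where one decomposes the \emph{second} factor as $\prod_j At^j$ (this is legitimate by Proposition~\ref{prop:commuting with filtered limits}) and computes explicitly in the basis $t^i\ctensor t^j$; this extends to topologically free $M$ by taking products; and then for general $M$ one picks a surjection $F\to M$ from a topologically free module (Proposition~\ref{prop:free modules}) and applies the snake lemma to the resulting $3\times 3$ diagram. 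Your limit reduction never gets used.
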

\begin{proof}
We closely follow  the argumentation of \cite[proof of Prop.~2.2]{SchnVen:DimThSPSR}, replacing generators by topological generators. Only the proof of the injectivity of $\kappa$ is different.

From Prop.~\ref{prop:free modules} and the definition of skew power series rings one deduces that $M$ is pseudocompact as $A$-module. Prop.~\ref{prop:existence base change} then implies that $B\ctensor_A M$ and $B^\sigma\ctensor_A M$
are pseudocompact as $B$-modules. Using the universal property of the completed tensor product (Prop.~\ref{prop:univ prop tensor}) one  shows that $\kappa$ and $\mu$ are well defined and functorial
continuous $B$-homomorphisms. Clearly, $\mu\comp\kappa=0$. The map $\mu$ is always surjective, since
$$
M\mto B\ctensor_A M,\qquad m\mapsto 1\ctensor m,
$$
is a continuous $A$-linear splitting by Prop.~\ref{prop:univ prop tensor}. One then observes that $\ker \mu$  is topologically generated as pseudocompact $A$-module by
$$
t^i\ctensor m-1\ctensor t^im=\sum_{k=0}^{i-1} \kappa(t^{i-1-k}\ctensor t^{k}m)
$$
where $i$ is an positive integer and $m$ runs through a topological generating system of the pseudocompact $A$-module $M$. Hence, $\ker \mu=\im \kappa$.

We will now consider the special case $M=B$. The canonical isomorphism of left pseudocompact $A$-modules
$$
B\ctensor_A B=B\ctensor_A \prod_{i=0}^{\infty} A=\prod_{i=0}^{\infty}B=\prod_{i=0}^\infty\prod_{j=0}^{\infty}A
$$
shows that the system $(t^i\ctensor t^j)_{i\geq0,j\geq 0}$ is a topological basis of $B\ctensor_A B$ as pseudocompact $A$-module. The same argument also works for $B^ {\sigma}\ctensor_A B$. Let $a_{i,j}\in A$ with $a_{i,j}=0$ for $i<0$ or $j<0$. An easy inductive argument shows that
$$
\kappa\left(\sum_{i=0}^{\infty}\sum_{j=0}^{\infty} a_{i,j}t^i\ctensor t^j\right)=\sum_{i=0}^{\infty}\sum_{j=0}^{\infty} (a_{i-1,j}-a_{i,j-1})t^i\ctensor t^j
$$
is zero if and only if $a_{i,j}=0$ for all $i$ and $j$. Since the completed tensor product commutes with arbitrary products, it follows that  $\kappa$ is injective for all topologically free pseudocompact $B$-modules $M$.

If $M$ is an arbitrary pseudocompact $B$-module, Prop.~\ref{prop:free modules} implies the existence of an exact sequence
$$
0\mto N\mto F\mto M\mto 0
$$
of pseudocompact $B$-modules with $F$ topologically free. We obtain the following commutative diagram with exact rows and columns:
$$
\xymatrix{
&                            &                    & 0\ar[d] \\
&B^{\sigma}\ctensor_A N\ar[r]\ar[d]& B\ctensor_A N\ar[r]\ar[d]& N\ar[r]\ar[d]& 0\\
0\ar[r]&B^{\sigma}\ctensor_A F\ar[r]\ar[d]&B\ctensor_A F\ar[r]\ar[d]& F\ar[r]\ar[d] & 0\\
&B^{\sigma}\ctensor_A M\ar[r]^{\kappa}\ar[d]& B\ctensor_A M\ar[r]\ar[d]& M\ar[r]\ar[d] & 0\\
& 0& 0 & 0}
$$
By the Snake lemma, we see that $\kappa$ is injective in general.
\end{proof}

\subsection{K-theory}

We recall that the Quillen $\KTh$-groups of a ring $R$ may be calculated via Waldhausen's $S$-construction from the Waldhausen categories of either perfect or strictly perfect complexes \cite{Wal:AlgKTheo}, \cite{ThTr:HAKTS+DC}.

\begin{defn}
A complex of $R$-modules $\cmplx{P}$ is \emph{strictly perfect} if each $P^n$ is projective and finitely generated and if $P^n=0$ for almost all $n$. We denote by $\cat{SP}(R)$ the Waldhausen category of strictly perfect complexes with quasi-isomorphisms as weak equivalences and injections with strictly perfect cokernel as cofibrations.
\end{defn}

\begin{defn}
A complex of $R$-modules $\cmplx{P}$ is \emph{perfect} if it is quasi-isomorphic to a strictly perfect complex. We denote by $\cat{P}(R)$ the Waldhausen category of perfect complexes. Quasi-isomorphisms are weak equivalences and injections are cofibrations.
\end{defn}

Let $A$ be a left pseudocompact ring and $\cmplx{P}$ a strictly perfect complex. By Prop.~\ref{prop:finitely presented modules},
each $P^n$ carries a natural pseudocompact topology and thus becomes a projective object in $\cat{PC}(A)$. Moreover, if $\cmplx{Q}$ is a complex of pseudocompact $A$-modules (with continuous differentials) and $a\colon\cmplx{P}\mto\cmplx{Q}$ is a morphism of complexes, then $a$ is automatically continuous.

\begin{defn}
Let $A$ be a left pseudocompact ring. We call a complex $\cmplx{P}$ in the category of complexes over $\cat{PC}(A)$ \emph{topologically strictly perfect} if $\cmplx{P}$ is quasi-isomorphic to a strictly perfect complex, each $P^n$ is projective in $\cat{PC}(A)$, and $P^n=0$ for almost all $n$. We denote by $\cat{TSP}(A)$ the Waldhausen category of topologically strictly perfect complexes with quasi-isomorphisms as weak equivalences and injections with topologically strictly perfect cokernel as cofibrations.
\end{defn}

By what was said above and the Waldhausen approximation theorem \cite[Thm.~1.9.1]{ThTr:HAKTS+DC} the natural functors
$$
\cat{SP}(A)\mto \cat{TSP}(A),\qquad \cat{TSP}(A)\mto \cat{P}(A)
$$
induce isomorphisms on the associated $\KTh$-groups. The Waldhausen $\KTh$-groups of $\cat{SP}(A)$
are known to agree with the Quillen $\KTh$-groups of the ring $A$, see \cite[Thm.~1.11.2, Thm.~1.11.7.]{ThTr:HAKTS+DC}. Hence,
$$
\KTh_n(A)=\KTh_n(\cat{SP}(A))=\KTh_n(\cat{TSP}(A))=\KTh_n(\cat{P}(A))
$$
for all $n$. For us, it is most convenient to work with the category $\cat{TSP}(A)$.

\begin{lem}\label{lem:completed base change for perfect complexes}
Let $B$ be a left pseudocompact ring, $A$ be a pseudocompact ring and $M$ be a pseudocompact $B$-$A$-bimodule which is finitely generated projective as $B$-module.
For any $\cmplx{P}$ in $\cat{TSP}(A)$, the complex $M\ctensor_A \cmplx{P}$ is in $\cat{TSP}(B)$ and the functor
$$
\cat{TSP}(A)\mto\cat{TSP}(B),\qquad \cmplx{P}\mapsto M\ctensor_A\cmplx{P}
$$
is Waldhausen exact.
\end{lem}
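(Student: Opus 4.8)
The plan is to check, in order, that (i)~$M\ctensor_A-$ sends each term of a complex in $\cat{TSP}(A)$ to a projective object of $\cat{PC}(B)$; (ii)~$M\ctensor_A\cmplx{P}$ is quasi-isomorphic to a strictly perfect complex of $B$-modules for every $\cmplx{P}\in\cat{TSP}(A)$, so that $M\ctensor_A\cmplx{P}$ lies in $\cat{TSP}(B)$; and (iii)~the functor preserves the Waldhausen structure. By Proposition~\ref{prop:existence base change}(2), $M\ctensor_A-$ restricts to an additive functor $\cat{PC}(A)\mto\cat{PC}(B)$, and by Proposition~\ref{prop:commuting with filtered limits} it commutes with arbitrary products, in particular with finite direct sums. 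The second elementary observation is that a cofibration $\cmplx{P}\cto\cmplx{P}_1$ in $\cat{TSP}(A)$ is split in each degree: its cokernel $\cmplx{C}$ is topologically strictly perfect, so each $C^n$ is projective in $\cat{PC}(A)$ and the degree-$n$ sequence $0\mto P^n\mto P_1^n\mto C^n\mto 0$ splits.

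For (i), note first that $M$, being finitely presented and projective as a $B$-module, is a direct summand of some $B^k$, and the corresponding idempotent of $B^k$ is automatically continuous by Proposition~\ref{prop:finitely presented modules}; hence $M$ is a projective object of $\cat{PC}(B)$ by Proposition~\ref{prop:free modules}. A term $P^n$ of $\cmplx{P}\in\cat{TSP}(A)$ is a direct summand of a product $\prod_I A$ in $\cat{PC}(A)$ (Proposition~\ref{prop:free modules}); applying $M\ctensor_A-$, commuting it with the product, and using $M\ctensor_A A\isomorph M$ (Proposition~\ref{prop:comparison of tensor prods}, applied over $A^{\op}$), we get that $M\ctensor_A P^n$ is a direct summand of $\prod_I M$, hence of a product of copies of $B$, hence projective in $\cat{PC}(B)$. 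Since $P^n$ vanishes for almost all $n$, so does $M\ctensor_A P^n$, so $M\ctensor_A\cmplx{P}$ is a bounded complex of projectives in $\cat{PC}(B)$. Moreover, if $\cmplx{Q}$ is already strictly perfect over $A$, each $Q^n$ is finitely presented and Proposition~\ref{prop:comparison of tensor prods}(2) gives $M\ctensor_A Q^n\isomorph M\tensor_A Q^n$, which is a direct summand of $M\tensor_A A^{m}\isomorph M^{m}$ and hence finitely generated projective over $B$; so $M\ctensor_A\cmplx{Q}$ is strictly perfect over $B$.

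The crucial point is that $M\ctensor_A-$ preserves quasi-isomorphisms between complexes in $\cat{TSP}(A)$, and here one cannot argue degreewise or via a short exact sequence, since $M\ctensor_A-$ need not be left exact. Instead, let $f\colon\cmplx{P}\mto\cmplx{P}_1$ be a quasi-isomorphism with $\cmplx{P},\cmplx{P}_1\in\cat{TSP}(A)$. Then $\Cone(f)$ is a bounded acyclic complex all of whose terms are projective in $\cat{PC}(A)$, and any such complex is contractible: the differential onto the top term is a surjection onto a projective object, hence split, exhibiting the second-highest term as a direct sum of (a copy of) the top term and the kernel, which is again projective by Proposition~\ref{prop:free modules}; one then descends by induction on the length. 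An additive functor preserves contracting homotopies, so $\Cone(M\ctensor_A f)\isomorph M\ctensor_A\Cone(f)$ is contractible and $M\ctensor_A f$ is a quasi-isomorphism. This yields (ii): any $\cmplx{P}\in\cat{TSP}(A)$ is linked by a chain of quasi-isomorphisms to a strictly perfect $A$-complex $\cmplx{Q}$, hence $M\ctensor_A\cmplx{P}$ is linked by a chain of quasi-isomorphisms to the strictly perfect $B$-complex $M\ctensor_A\cmplx{Q}$; combined with (i), this shows $M\ctensor_A\cmplx{P}\in\cat{TSP}(B)$.

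Finally, for Waldhausen exactness: the functor preserves the zero object, and by the previous paragraph it preserves weak equivalences. Given a cofibration $\cmplx{P}\cto\cmplx{P}_1$ with cokernel $\cmplx{C}$, which is split in each degree, the additive degreewise functor $M\ctensor_A-$ produces a degreewise split injection $M\ctensor_A\cmplx{P}\cto M\ctensor_A\cmplx{P}_1$ with cokernel $M\ctensor_A\cmplx{C}\in\cat{TSP}(B)$, i.\,e.\ a cofibration in $\cat{TSP}(B)$; and since a pushout along a cofibration is computed degreewise as a direct sum and $M\ctensor_A-$ commutes with finite direct sums, the functor preserves pushouts along cofibrations. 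This proves the lemma. I expect the main obstacle to be the quasi-isomorphism invariance: because the completed tensor product is not exact in general, the usual degreewise arguments fail, and one must instead use that complexes in $\cat{TSP}(A)$ consist of projectives, so that quasi-isomorphisms between them are chain homotopy equivalences and hence preserved by any additive functor.
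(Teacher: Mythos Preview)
Your proof is correct and follows essentially the same line as the paper's: step~(i) matches the paper's use of Propositions~\ref{prop:free modules}, \ref{prop:commuting with filtered limits}, and~\ref{prop:comparison of tensor prods} verbatim, and your treatment of strictly perfect $\cmplx{Q}$ is identical. The one small difference is in how quasi-isomorphism preservation is justified: the paper invokes right exactness of $M\ctensor_A-$ (Proposition~\ref{prop:tensor right exact}) to conclude that acyclic bounded complexes of projectives go to acyclic complexes, whereas you argue directly that such complexes are contractible and that additive functors preserve contracting homotopies---this is slightly more elementary (it does not even need right exactness) but amounts to the same thing, and the paper's Waldhausen-exactness argument via the external reference is just a packaged version of your explicit degreewise-split argument in~(iii).
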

\begin{proof}
Since the projectives in $\cat{PC}(B)$ are the direct summands of products of copies of $B$ (Prop.~\ref{prop:free modules}), $M\ctensor_A A=M$ (Prop.~\ref{prop:comparison of tensor prods}.(1)), and the completed tensor product commutes with products (Prop.~\ref{prop:commuting with filtered limits}) we conclude that $M\ctensor_A P^n$ is a projective pseudocompact $B$-module for each $n$.
Let $a\colon\cmplx{Q}\mto \cmplx{P}$ be quasi-isomorphism with $\cmplx{Q}$ a strictly perfect complex of $A$-modules. Since we are in the situation of Prop.~\ref{prop:tensor right exact},
we conclude that
$$
M\ctensor_A\cmplx{Q}\xrightarrow{\id\tensor a} M\ctensor_A\cmplx{P}
$$
is a quasi-isomorphism. On other hand, Prop.~\ref{prop:comparison of tensor prods} implies that
$$
M\tensor_A\cmplx{Q}\mto M\ctensor_A\cmplx{Q}
$$
is an isomorphism. Since $M$ is finitely generated and projective as $B$-module, the complex $M\tensor_A\cmplx{Q}$ is strictly perfect as complex of $B$-modules. This shows that $M\ctensor_A\cmplx{P}$ is an object of $\cat{TSP}(B)$.

By Prop.~\ref{prop:tensor right exact}, the completed tensor product with $M$ is a right exact functor on $\cat{PC}(A)$. Since the objects in $\cat{TSP}(A)$ are bounded complexes of projective objects in $\cat{PC}(A)$, it takes short exact sequences of complexes in $\cat{TSP}(A)$ to short exact sequences in $\cat{TSP}(B)$ and acyclic complexes to acyclic complexes. Therefore, it constitutes a Waldhausen exact functor between the categories (see e.\,g.\ \cite[Lemma~3.1.8]{Witte:PhD}).
\end{proof}

\begin{defn}
Let $A$ be a subring of $B$. We let $\cat{SP}^A(B)$ denote the Waldhausen subcategory of $\cat{SP}(B)$ consisting of strictly perfect complexes of $B$-modules which are perfect as complexes of $A$-modules. Furthermore, we let $\cat{SP}_A(B)$ denote the Waldhausen category with the same objects and cofibrations as $\cat{SP}(B)$, but with new weak equivalences: A morphism of complexes $f\colon \cmplx{P}\mto\cmplx{Q}$ is a weak equivalence if the cone of $f$ is in $\cat{SP}^A(B)$. If additionally $A$ and $B$ are left pseudocompact rings such that $B$ is pseudocompact as left $A$-module, we define likewise the Waldhausen categories $\cat{TSP}^A(B)$ and $\cat{TSP}_A(B)$.
\end{defn}

We recall from \cite[Thm.~1.8.2]{ThTr:HAKTS+DC} that the natural Waldhausen functors
$$
\cat{SP}^A(B)\mto\cat{SP}(B)\mto\cat{SP}_A(B)
$$
induce via the $S$-construction a homotopy fibre sequence of the associated topo\-logi\-cal spaces and hence, a long exact sequence
$$
\mto\KTh_n(\cat{SP}^A(B))\mto \KTh_n(B)\mto \KTh_n(\cat{SP}_A(B))\mto \dotsc \mto \KTh_0(B)\mto \KTh_0(\cat{SP}_A(B))\mto 0.
$$
The same is also true for the $\cat{TSP}$-version and by the approximation theorem \cite[Thm.~1.9.1]{ThTr:HAKTS+DC} the resulting long exact sequence is isomorphic to the one above.

\begin{thm}\label{thm:splitting abstract}
Let $B$ be a skew power series ring over a pseudocompact ring $A$ and assume that there exists a unit $\gamma\in B$ such that
$$
\sigma(a)=\gamma a \gamma^{-1}
$$
for all $a\in A$. The long exact localisation sequence for $\cat{TSP}^A(B)\mto\cat{TSP}(B)\mto\cat{TSP}_A(B)$ splits into short split-exact sequences
$$
0\mto \KTh_{n+1}(B)\mto \KTh_{n+1}(\cat{TSP}_A(B))\mto \KTh_n(\cat{TSP}^A(B))\mto 0
$$
for $n\geq 0$.
\end{thm}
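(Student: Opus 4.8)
The plan is to exhibit a splitting of the connecting map $\del_n\colon \KTh_{n+1}(\cat{TSP}_A(B))\to\KTh_n(\cat{TSP}^A(B))$ coming from the homotopy fibre sequence of $S$-construction spaces. First I would reinterpret the short exact sequence of Proposition~\ref{prop:the exact sequence} at the level of complexes: for any $\cmplx{P}$ in $\cat{TSP}^A(B)$ the completed base change functors $\cmplx{P}\mapsto B\ctensor_A\cmplx{P}$ and $\cmplx{P}\mapsto B^\sigma\ctensor_A\cmplx{P}$ land in $\cat{TSP}(B)$ and are Waldhausen exact by Lemma~\ref{lem:completed base change for perfect complexes} (applied with the roles of the rings suitably chosen; here $B$ is finitely generated projective over $B$ and the bimodule structures are the evident ones). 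The inner-automorphism hypothesis $\sigma(a)=\gamma a\gamma^{-1}$ is used precisely here: left multiplication by $\gamma$ gives an isomorphism of $B$-$A$-bimodules $B^\sigma\cong B$, so the two base-change functors are naturally isomorphic as Waldhausen exact endofunctors of $\cat{TSP}^A(B)$ (equivalently, of $\cat{TSP}(B)$ restricted to that subcategory). Consequently $\kappa$ realises a short exact sequence of Waldhausen exact functors
$$
0\mto B\ctensor_A(-)\xrightarrow{\ \kappa\ } B\ctensor_A(-)\xrightarrow{\ \mu\ } \id \mto 0
$$
on $\cat{TSP}^A(B)$, where the outer terms are the \emph{same} functor $F:=B\ctensor_A(-)$ and the middle cokernel is the identity functor (note the identity functor on $\cat{TSP}^A(B)$ factors through $\cat{TSP}(B)$, which is where these complexes actually live as strictly perfect objects).

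Next I would feed this into Waldhausen's additivity theorem \cite{Wal:AlgKTheo}: a short exact sequence of exact functors $0\to F'\to F\to F''\to 0$ induces on $S$-construction spaces a homotopy $F_*\simeq F'_*\cdot F''_*$ (loop-sum). Applied to the sequence above this gives $F_*\simeq F_*\cdot \iota_*$ on $\KTh(\cat{TSP}^A(B))$, where $\iota$ is the inclusion $\cat{TSP}^A(B)\hookrightarrow\cat{TSP}(B)$ composed with $\cat{TSP}(B)\to\cat{TSP}_A(B)$ — wait, more carefully: the identity functor on $\cat{TSP}^A(B)$ composed with the localisation functor $\cat{TSP}^A(B)\to\cat{TSP}(B)\to\cat{TSP}_A(B)$ is the zero map on $\KTh$-groups (the composite of two consecutive maps in the fibre sequence), so after post-composing the additivity relation with the map to $\KTh(\cat{TSP}_A(B))$ one learns that the composite $\KTh_n(\cat{TSP}^A(B))\xrightarrow{F}\KTh_n(\cat{TSP}(B))\to\KTh_n(\cat{TSP}_A(B))$ is nullhomotopic; but the composite $\KTh_n(\cat{TSP}^A(B))\xrightarrow{F}\KTh_n(B)$ composed further along the fibre sequence need not vanish, and it is this that will produce the section. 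The cleanest way to package it: additivity plus the null-homotopy of $\iota$ shows that the Waldhausen exact functor $F\colon\cat{TSP}^A(B)\to\cat{TSP}(B)$, composed with $\cat{TSP}(B)\to\cat{TSP}_A(B)$, is canonically homotopic to the trivial functor, which by the universal property of the homotopy fibre lifts $F$ canonically to a map $\KTh(\cat{TSP}^A(B))\to\Fibre$ of the localisation map, i.e. a map whose composite with the boundary $\Fibre\to\KTh(\cat{TSP}^A(B))$ is (up to sign) the self-map of $\KTh(\cat{TSP}^A(B))$ induced by $F$ minus the identity — and by additivity again that self-map is the identity, so we get a genuine section of $\del_n$.

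I would then invoke the elementary homotopy-theoretic fact that a long exact sequence of abelian groups in which every third connecting map $\del_n$ admits a section splits into short split-exact sequences: the section gives $\KTh_{n+1}(\cat{TSP}_A(B))\cong \im(\KTh_{n+1}(\cat{TSP}(B)))\oplus\KTh_n(\cat{TSP}^A(B))$, and since $\del_n$ is then surjective with a section the next map $\KTh_n(\cat{TSP}^A(B))\to\KTh_n(\cat{TSP}(B))$ is zero, so $\KTh_{n+1}(\cat{TSP}(B))\to\KTh_{n+1}(\cat{TSP}_A(B))$ is injective; this yields the asserted short split-exact sequences for all $n\ge 0$, using $\KTh_n(\cat{TSP}(B))=\KTh_n(B)$. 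The main obstacle is the bookkeeping in the middle paragraph: making precise, in Waldhausen's framework, the statement that a short exact sequence of exact \emph{endofunctors} of a Waldhausen category together with the vanishing of one composite along a fibre sequence produces a \emph{canonical} section (not merely a non-canonical splitting of the groups), and checking that the self-map of $\KTh(\cat{TSP}^A(B))$ extracted from additivity is honestly the identity with the correct sign conventions. Everything else — the base-change lemma, the inner-automorphism identification $B^\sigma\cong B$, and the final diagram chase — is routine given the results already established in the excerpt.
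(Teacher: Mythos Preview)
Your strategy matches the paper's, and your first computation is the whole point: Waldhausen additivity applied to the exact sequence of functors $0\to F\to F\to\iota\to 0$ (with $F=B\ctensor_A(-)$ and $\iota$ the inclusion) gives $F_*\simeq F_*+\iota_*$ on $\Kspace(\cat{TSP}(B))$, so the map $\Kspace(\cat{TSP}^A(B))\to\Kspace(\cat{TSP}(B))$ induced by the inclusion is null-homotopic. The paper then simply quotes the elementary fact (Bredon, \emph{Topology and Geometry}, Thm.~6.8) that when the first map in a homotopy fibre sequence $X\to Y\to Z$ is null-homotopic, the long exact homotopy sequence breaks into split short exact sequences: the null-homotopy itself furnishes the section $X\to\Omega Z$ of the boundary. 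No further bookkeeping is needed, and the ``main obstacle'' you flag dissolves.

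Your ``wait, more carefully'' detour is where things go wrong. Post-composing the additivity relation with $\Kspace(\cat{TSP}(B))\to\Kspace(\cat{TSP}_A(B))$ yields nothing new: both sides collapse to the class of $F$ composed with localisation, since $\iota$ composed with localisation is already null by the fibre-sequence axiom, so you learn only a tautology. And the attempt to ``lift $F$ to the fibre'' does not produce a section of $\del_n$: the homotopy fibre of the localisation map \emph{is} $\Kspace(\cat{TSP}^A(B))$, so a lift of $F$ along that fibre lands back in the source, not in $\Omega\Kspace(\cat{TSP}_A(B))$ where a section of $\del_n$ would have to live; the self-map you extract has no reason to be ``$F$ minus the identity''. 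So drop the detour and use your first observation directly. One small correction: the $B$-$A$-bimodule isomorphism $B\to B^\sigma$ is given by \emph{right} multiplication by $\gamma^{-1}$; left multiplication by $\gamma$ is not left $B$-linear.
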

\begin{proof}
For any Waldhausen category $\cat{W}$, let $\Kspace(\cat{W})$ denote the topological space associated to $\cat{W}$ via the $S$-construction \cite[Def.~1.5.3]{ThTr:HAKTS+DC}. Consider the homotopy fibre sequence
$$
\Kspace(\cat{TSP}^A(B))\mto\Kspace(\cat{TSP}(B))\mto\Kspace(\cat{TSP}_A(B)).
$$
from \cite[Thm.~1.8.2]{ThTr:HAKTS+DC}.

Note that right multiplication with $\gamma^{-1}$ gives rise to an isomorphism of $B$-$A$-bimodules $B\mto B^\sigma$.
By Lemma~\ref{lem:completed base change for perfect complexes} and since every projective pseudocompact $B$-module is also projective as pseudocompact $A$-module the completed tensor product
$$
\cat{TSP}^A(B)\mto\cat{TSP}(B),\qquad \cmplx{P}\mapsto B\ctensor_A\cmplx{P}
$$
is a Waldhausen exact functor.
By Prop.~\ref{prop:the exact sequence}
\begin{equation}\label{eq:exact sequence}
0\mto B\ctensor_A \cmplx{P}\xrightarrow{\kappa\comp\gamma^{-1}} B\ctensor_A\cmplx{P}\mto \cmplx{P}\mto 0
\end{equation}
is an exact sequence in $\cat{TSP}(B)$. The additivity theorem \cite[Cor.~1.7.3]{ThTr:HAKTS+DC} applied to this sequence implies that the map $\Kspace(\cat{TSP}^A(B))\mto\Kspace(\cat{TSP}(B))$ induced by the canonical inclusion
is homotopic to the constant map defined by the base point. By \cite[Thm.~6.8]{Bredon:TopologyAndGeometry}, which applies equally well to homotopy fibre sequences instead of fibre sequences, the assertion of the theorem follows.
\end{proof}

\begin{rem}
Let $B=A[[t;\sigma,\delta]]$ be a skew power series ring with a unit $\gamma\in B^{\times}$ as above. For $a\in A^{\times}$ and $b\in A$, let
$B'=A[[t';\sigma',\delta']]$ be the topological ring $B$ with the modified skew power series ring structure as in Remark~\ref{rem:change of variable} and set $\gamma'=a\gamma$. Then the exact sequence $(\ref{eq:exact sequence})$ for $B'$ is exactly the same as the one for $B$. In particular, one obtains the same splitting for the $\KTh$-groups.
\end{rem}

Using the algebraic description  of the first Postnikov section of the topological space associated to a Waldhausen category \cite[Def. 1.2]{MT:1TWKTS} we can give the following explicit formula for a section $s\colon\KTh_0(\cat{TSP}^A(B))\mto\KTh_1(\cat{TSP}_A(B))$: The group $\KTh_0(\cat{TSP}^A(B))$ is the abelian group generated by the objects of $\cat{TSP}^A(B)$ modulo the relations generated by exact sequences and weak equivalences in $\cat{TSP}^A(B)$. Moreover, there exists a canonical class $[w]\in\KTh_1(\cat{TSP}_A(B))$ for each weak auto-equivalence $w\colon\cmplx{P}\mto\cmplx{P}$ in $\cat{TSP}_A(B)$.

\begin{defn}\label{defn:explicit splitting}
In the situation of Theorem~\ref{thm:splitting abstract}, set
$$
s_{B,\gamma}(\cmplx{P})=[B\ctensor_A \cmplx{P}\xrightarrow{\kappa\comp\gamma^{-1}} B\ctensor_A\cmplx{P}]^{-1}\in\KTh_1(\cat{TSP}_A(B))
$$
for each $\cmplx{P}$ in $\cat{TSP}^A(B)$.
\end{defn}

It is easy to check that $s_{B,\gamma}$ defines a group homomorphism
$$
\KTh_0(\cat{TSP}^A(B))\mto\KTh_1(\cat{TSP}_A(B))
$$
(see e.\,g.\ \cite[Thm~2.4.7]{Witte:PhD}). Moreover, $\del_0(s_{B,\gamma}(\cmplx{P}))$ is the cone of $\kappa\comp\gamma^{-1}$ by \cite[Thm.~A.5]{Witte:MCVarFF} and hence, equal to $\cmplx{P}$ in  $\KTh_0(\cat{TSP}^A(B))$.

\begin{rem}\label{rem:normalisation of boundary}
Note that the above description of the boundary homomorphism $\del_0$ is compatible with the formula given in \cite[p.~2]{WeibYao:Localization}. Other authors prefer to use $-\del_0$ instead.
\end{rem}

We record the following functorial behaviour.

\begin{prop}\label{prop:functoriality of splitting}
Let $B_i=A_i[[t_i;\sigma_i,\delta_i]]$ ($i=1,2$) be two skew power series rings, $M$ a pseudocompact $B_2$-$B_1$-bimodule which is finitely generated and projective as $B_2$-module, $N\subset M$ a pseudocompact $A_2$-$A_1$-subbimodule which is finitely generated and projective as $A_2$-module such that  $N\ctensor_{A_1} B_1\isomorph M$ as $A_2$-$B_1$-bimodules. Then the completed tensor product with $M$ induces Waldhausen exact functors
\begin{align*}
T_M\colon\cat{TSP}^{A_1}(B_1)&\mto\cat{TSP}^{A_2}(B_2),\qquad \cmplx{P}\mapsto M\ctensor_{B_1}\cmplx{P},\\
T_M\colon\cat{TSP}_{A_1}(B_1)&\mto\cat{TSP}_{A_2}(B_2),\qquad \cmplx{P}\mapsto M\ctensor_{B_1}\cmplx{P}.
\end{align*}
Assume further that there exists units $\gamma_i\in B_i$ with $\sigma_i(a)=\gamma_i a \gamma_i^{-1}$ for all $a\in A_i$ and let one of the following two conditions be satisfied:
\begin{enumerate}
\item The inclusion $N\subset M$ also induces an isomorphism of pseudocompact $B_2$-$A_1$-bimodules $B_2\ctensor_{A_2}N\isomorph M$ and for all $n\in N$,
    $$
    \gamma_2 n\gamma_1^{-1}\in N ,\qquad t_2 n-\gamma_2 n \gamma_1^{-1}t_1\in N.
    $$
\item  $A_2=A_1$ and $B_2\subset B_1$ is a closed subring; $t_i=\gamma_i-1$ for $i=1,2$; there exists a number $k\in\Int$ such that $\gamma_2=\gamma_1^k$; $M=B_1$ and $N=A_1$.
\end{enumerate}
Then the following diagram is commutative:
$$
\xymatrix{
\KTh_0(\cat{TSP}^{A_1}(B_1))\ar[r]^{s_{B_1,\gamma_1}}\ar[d]^{T_M}&\KTh_1(\cat{TSP}_{A_1}(B_1))\ar[d]^{T_M}\\
\KTh_0(\cat{TSP}^{A_2}(B_2))\ar[r]^{s_{B_2,\gamma_2}}&\KTh_2(\cat{TSP}_{A_2}(B_2))
}
$$
\end{prop}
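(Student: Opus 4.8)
The plan is to establish the two statements about $T_M$ and then reduce the commutativity of the square to an identity $[W_1]=[W_2]$ between classes of explicit weak auto-equivalences. For the functors: that $\cmplx{P}\mapsto M\ctensor_{B_1}\cmplx{P}$ is a Waldhausen exact functor $\cat{TSP}(B_1)\mto\cat{TSP}(B_2)$ is Lemma~\ref{lem:completed base change for perfect complexes} applied to the $B_2$-$B_1$-bimodule $M$, which is finitely generated projective over $B_2$ by hypothesis. To see that it carries $\cat{TSP}^{A_1}(B_1)$ into $\cat{TSP}^{A_2}(B_2)$ I would use the isomorphism $N\ctensor_{A_1}B_1\isomorph M$: associativity of the completed tensor product together with Prop.~\ref{prop:comparison of tensor prods} gives $M\ctensor_{B_1}\cmplx{P}\isomorph N\ctensor_{A_1}\cmplx{P}$ as complexes over $\cat{PC}(A_2)$, and a further application of Lemma~\ref{lem:completed base change for perfect complexes} (now to $N$, finitely generated projective over $A_2$) shows this is perfect over $A_2$. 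For the $\cat{TSP}_{A_i}$-versions one only has to note that the functor is unchanged on objects and cofibrations and, being exact on bounded complexes of projectives in $\cat{PC}(B_1)$, commutes with mapping cones; hence it carries a morphism with cone in $\cat{TSP}^{A_1}(B_1)$ to one with cone in $\cat{TSP}^{A_2}(B_2)$.

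For the diagram, I would use the $1$-truncated description of $\KTh$-theory from \cite{MT:1TWKTS}: the assignment $w\mapsto[w]$ on weak auto-equivalences is additive, satisfies $[w'\comp w]=[w']+[w]$, and is compatible with Waldhausen exact functors. Applying $T_M$ to the exact sequence \eqref{eq:exact sequence} for $B_1$ shows that $T_M(s_{B_1,\gamma_1}(\cmplx{P}))$ is the inverse of the class of the weak auto-equivalence
$$
W_1=\id_M\ctensor(\kappa_1\comp\gamma_1^{-1})\colon M\ctensor_{B_1}(B_1\ctensor_{A_1}\cmplx{P})\mto M\ctensor_{B_1}(B_1\ctensor_{A_1}\cmplx{P}),
$$
while evaluating \eqref{eq:exact sequence} for $B_2$ at $T_M(\cmplx{P})=M\ctensor_{B_1}\cmplx{P}$ shows that $s_{B_2,\gamma_2}(T_M(\cmplx{P}))$ is the inverse of the class of
$$
W_2=\kappa_2\comp\gamma_2^{-1}\colon B_2\ctensor_{A_2}(M\ctensor_{B_1}\cmplx{P})\mto B_2\ctensor_{A_2}(M\ctensor_{B_1}\cmplx{P}).
$$
Since the square is a square of group homomorphisms, it is enough to evaluate on the generators $[\cmplx{P}]$ of $\KTh_0(\cat{TSP}^{A_1}(B_1))$, so it suffices to prove $[W_1]=[W_2]$ in $\KTh_1(\cat{TSP}_{A_2}(B_2))$ for every $\cmplx{P}$.

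In case (1), associativity of $\ctensor$ and the two identifications $M\isomorph B_2\ctensor_{A_2}N$ and $M\isomorph N\ctensor_{A_1}B_1$ identify both the source of $W_1$ and the source of $W_2$ with $B_2\ctensor_{A_2}(N\ctensor_{A_1}\cmplx{P})$; under this identification I expect $W_1$ and $W_2$ to become the same endomorphism, which gives $[W_1]=[W_2]$. By the universal property of the completed tensor product (Prop.~\ref{prop:univ prop tensor}) and continuity, this equality may be checked on simple tensors $c\ctensor(n\ctensor p)$ with $c\in B_2$ and $n\in N$, and by $B_2$-linearity with $c=1$; inserting the formula $\kappa_i(b\ctensor m)=bt_i\ctensor m-b\ctensor t_i m$ and the description of $\gamma_i^{-1}$ as right multiplication, it reduces precisely to the two hypotheses $\gamma_2 n\gamma_1^{-1}\in N$ (allowing $n\gamma_1^{-1}=\gamma_2^{-1}(\gamma_2 n\gamma_1^{-1})$ to be rewritten inside the tensor factor $B_2\ctensor_{A_2}N$) and $t_2 n-\gamma_2 n\gamma_1^{-1}t_1\in N$ (reconciling the $t_1$- and $t_2$-terms). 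In case (2) we have $A_2=A_1=:A$, $M=B_1$ with $T_M$ the restriction along $B_2\subset B_1$, and $t_i=\gamma_i-1$; the last relation forces $\delta_i=\sigma_i-\id$ and hence $\kappa_i\comp\gamma_i^{-1}=\id-\Theta_i$ with $\Theta_i(b\ctensor m)=b\gamma_i^{-1}\ctensor\gamma_i m$. Here I would instead decompose $B_1\ctensor_A\cmplx{P}$ as a complex of $B_2$-modules along the finite free basis of $B_1$ over $B_2$ given by powers of $\gamma_1$, write $\id-\Theta_1$ in the resulting block form (the off-diagonal entries cyclically permuting the summands up to twists by powers of $\sigma_1$), and reduce the $\KTh_1$-class by elementary row and column operations — which leave it unchanged — to the class of $\id-\Theta_2$ on a single summand; the decisive computation is that the composite of the off-diagonal maps around the cycle equals $\Theta_2$, which comes down to $\gamma_2=\gamma_1^k$ together with $t_i=\gamma_i-1$.

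The main obstacle, in both cases, is precisely this last verification: keeping careful track of the several associativity and base-change isomorphisms for the completed tensor product — all legitimate here because every module in sight is pseudocompact, so Prop.~\ref{prop:commuting with filtered limits}, \ref{prop:tensor right exact} and \ref{prop:comparison of tensor prods} apply — and reconciling the two skew power series structures along the identification in case (1), respectively carrying out the block reduction cleanly in case (2).
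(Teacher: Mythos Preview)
Your approach is correct and matches the paper's proof in both structure and detail: the paper establishes the Waldhausen exactness of $T_M$ exactly as you describe, and then compares $[W_1]$ and $[W_2]$ in $\KTh_1(\cat{TSP}_{A_2}(B_2))$ using the relations of \cite{MT:1TWKTS}. For case~(1) the paper organises your verification via an auxiliary endomorphism $\kappa'$ of $B_2\ctensor_{A_2}N\ctensor_{A_1}B_1$ built from $\sigma(n)=\gamma_2 n\gamma_1^{-1}$ and $\delta(n)=t_2 n-\sigma(n)t_1$, showing that both $W_1$ and $W_2$ become $\kappa'\ctensor\id_{\cmplx{P}}$ under the identifications you name; for case~(2) the paper carries out exactly your block reduction, writing $\kappa_1$ as an explicit $k\times k$ matrix in the basis $\Delta_0,\dots,\Delta_{k-1}$ and factoring it as a product of a lower-triangular matrix with diagonal $(\id,\dots,\id,\id-\Delta_k)$ and an upper unitriangular matrix, which is precisely the elementary reduction you anticipate.
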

\begin{proof}
Let $\cmplx{P}$ be in $\cat{TSP}^{A_1}(B_1)$. By definition, there exists a strictly perfect complex of $A_1$-modules $\cmplx{Q}$ and a quasi-isomorphism $\cmplx{Q}\mto\cmplx{P}$ in $\cat{TSP}(A_1)$. Hence,
$$
N\tensor_{A_1}\cmplx{Q}\mto N\ctensor_{A_1}\cmplx{P}\isomorph M\ctensor_{B_1}\cmplx{P}
$$
is a quasi-isomorphism of complexes of pseudocompact $A_2$-modules. This shows that $T_M$ takes objects of $\cat{TSP}^{A_1}(B_1)$ to objects of $\cat{TSP}^{A_2}(B_2)$ and weak equivalences in $\cat{TSP}_{A_1}(B_1)$ to weak equivalences in $\cat{TSP}_{A_2}(B_2)$. It follows easily that both versions of $T_M$ are Waldhausen exact as claimed.

Assume that condition $(1)$ is satisfied. For $n\in N$ set $\sigma(n)=\gamma_2 n\gamma_1^{-1}$, $\delta(n)=t_2n-\sigma(n)t_1$ and consider the homomorphism of $B_2$-$B_1$-bimodules
\begin{gather*}
\kappa'\colon B_2\ctensor_{A_2}N\ctensor_{A_1}B_1\mto B_2\ctensor_{A_2}N\ctensor_{A_1}B_1,\\
b_2\ctensor n\ctensor b_1\mapsto b_2\gamma_2^{-1}t_2\ctensor n\ctensor b_1-b_2\gamma_2^{-1}\ctensor\sigma(n)\ctensor t_1 b_1-b_2\gamma_2^{-1}\ctensor\delta(n)\ctensor b_1.
\end{gather*}
Then the diagram
$$
\xymatrix{
M\ctensor_{B_1}B_1\ctensor_{A_1}\cmplx{P}\ar[r]^{\id_M\ctensor(\kappa\comp\gamma_1^{-1})}\ar@{=}[d]&M\ctensor_{B_1}B_1\ctensor_{A_1}\cmplx{P}\ar@{=}[d]\\
M\ctensor_{A_1}\cmplx{P}\ar[r]&M\ctensor_{A_1}\cmplx{P}\\
B_2\ctensor_{A_2}N\ctensor_{A_1}B_1\ctensor_{B_1}\cmplx{P}\ar[u]_{\isomorph}\ar[d]^{\isomorph}\ar[r]^{\kappa'\ctensor\id_{\cmplx{P}}}&
B_2\ctensor_{A_2}N\ctensor_{A_1}B_1\ctensor_{B_1}\cmplx{P}\ar[u]_{\isomorph}\ar[d]^{\isomorph}\\
B_2\ctensor_{A_2}M\ctensor_{B_1}\cmplx{P}\ar[r]^{(\kappa\comp\gamma_2^{-1})\ctensor\id_{\cmplx{P}}}&B_2\ctensor_{A_2}M\ctensor_{B_1}\cmplx{P}
}
$$
commutes for every $\cmplx{P}$ in $\cat{TSP}^{A_1}(B_1)$. We conclude
\begin{align*}
T_M(s_{B_1,\gamma_1}(\cmplx{P}))&=[\id_M\ctensor(\kappa\comp\gamma_1^{-1})]^{-1}=[\kappa'\ctensor\id_{\cmplx{P}}]^{-1}\\
&=[(\kappa\comp\gamma_2^{-1})\ctensor\id_{\cmplx{P}}]^{-1}=s_{B_2,\gamma_2}(T_M(\cmplx{P}))
\end{align*}
from the relations listed in \cite[Def.~1.2]{MT:1TWKTS}.

The proof in the case that condition $(2)$ is satisfied follows essentially as in \cite[Proof of Lemma~4.6]{BV:DescentTheory}:
Let $A=A_1=A_2$ and
set for $\cmplx{P}$ in $\cat{TSP}^{A}(B_1)$
\begin{align*}
\Delta_i\colon B_2\ctensor_{A}\cmplx{P}&\mto B_1\ctensor_{A}\cmplx{P},\qquad x\ctensor y\mapsto x\gamma_1^{-i}\ctensor \gamma_1^{i}y,\\
\kappa_1\colon B_1\ctensor_{A}\cmplx{P}&\mto B_1\ctensor_{A}\cmplx{P},\qquad x\ctensor y\mapsto x\gamma_1^{-1}t_1\ctensor y -x\gamma_1^{-1}\ctensor t_1=x\ctensor y-x\gamma_1^{-1}\ctensor \gamma_1 y,\\
\kappa_2\colon B_2\ctensor_{A}\cmplx{P}&\mto B_1\ctensor_{A}\cmplx{P},\qquad x\ctensor y\mapsto x\gamma_2^{-1}t_2\ctensor y -x\gamma_2^{-1}\ctensor t_2=x\ctensor y-x\gamma_2^{-1}\ctensor \gamma_2 y.
\end{align*}
Then
$$
(B_2\ctensor_{A}\cmplx{P})^k\mto B_1\ctensor_{A}\cmplx{P},\qquad (x_i)_{i=0}^{k-1}\mapsto \sum_{i=0}^{k-1}\Delta_i(x_i),
$$
is an isomorphism in $\cat{TSP}(B_2)$. Using this as an identification, $\kappa_1$ is given by right multiplication with the $k\times k$-matrix of $B_2\ctensor_{A}\cmplx{P}$-endomorphisms
\begin{gather*}
\begin{pmatrix}
\id       & -\id  & 0     &\hdots  &0\\
0         & \ddots&\ddots & \ddots & \vdots\\
\vdots    & \ddots&\ddots  & \ddots & 0 \\
0         & \ddots& \ddots &\ddots  &-\id\\
-\Delta_k & 0     &\hdots & 0      &\id
\end{pmatrix}=\\
\begin{pmatrix}
\id       & 0  & \hdotsfor{2}  &0\\
0         & \id&  0 &  \hdots & 0\\
\vdots    & \ddots&  \ddots  & \ddots & \vdots \\
0         & \hdots&  0     &\id  &   0\\
-\Delta_k & \hdotsfor{2}   & -\Delta_k &\id-\Delta_k\\
\end{pmatrix}
\begin{pmatrix}
\id       & -\id  & 0     &\hdots  &0\\
0         & \id   & -\id  & \ddots & \vdots\\
\vdots    & 0     & \ddots& \ddots & 0 \\
\vdots    & \vdots& \ddots&\ddots  &-\id\\
0         & 0     &\hdots & 0      &\id
\end{pmatrix}.
\end{gather*}
Note that $\id-\Delta_k=\kappa_2$.
The relations listed in \cite[Def.~1.2]{MT:1TWKTS} imply that the class in $\KTh_1(\cat{TSP}_A(B_2))$ of a triangular matrix as above is the product of the classes of the diagonal entries and the class of a composition of weak auto-equivalences is the product of the classes of the weak auto-equivalences. Hence,
$$
T_M(s_{B_1,\gamma_1}(\cmplx{P}))=[\kappa_1]^{-1}=[\id-\Delta_k]^{-1}=s_{B_2,\gamma_2}(T_M(\cmplx{P})).
$$
\end{proof}

\subsection{Left Denominator Sets}

We recall that a subset $S$ in a ring $B$ is a \emph{left denominator set} if
\begin{enumerate}
\item $S$ is multiplicatively closed,
\item (\emph{Ore condition}) for each $s\in S$, $b\in B$ there exist $s'\in S$, $b'\in B$ such that $b's=s'b$, and
\item (\emph{annihilator condition}) for each $s\in S$, $b\in B$ with $bs=0$ there exists $s'\in S$ with $s'b=0$.
\end{enumerate}
If $S$ is a left denominator set, the \emph{(left) quotient ring} $B_S$ of $B$ exists and is flat as right $B$-module \cite[Ch.~10]{GW:NoncommNoethRings}.

We would like to identify the $\KTh$-groups of the Waldhausen category $\cat{SP}_A(B)$ with the $\KTh$-groups of the quotient ring $B_S$ of $B$ with respect to a left denominator set $S\subset B$. Unfortunately, it is not clear to us that this is always possible. Below, we will show that there exists an essentially unique candidate for $S$. Under the condition that $A$ is noetherian plus a mild extra condition we show that this $S$ does indeed have the right properties.

\begin{defn}\label{defn:can Ore set}
Let $A$ be a subring of $B$. We let $S$ denote the set of elements $b\in B$ such that
$$
B\xrightarrow{\cdot b} B
$$
is perfect as complex of $A$-modules and hence, an object in $\cat{SP}^A(B)$.
\end{defn}

\begin{lem}
The set $S$ is multiplicatively closed and saturated, i.\,e.\ if any two of the elements $a, b, ab\in B$ are in $S$, then so is the third.
\end{lem}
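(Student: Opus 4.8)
The statement to prove is that the set $S$ of Definition~\ref{defn:can Ore set} is multiplicatively closed and saturated. The plan is to translate both assertions into statements about the two-term complexes $B\xrightarrow{\cdot b}B$ viewed in the derived category of $A$-modules, and to exploit the standard octahedron/cofibre-sequence manipulations together with the characterisation of perfectness.

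\emph{Multiplicative closure.} First I would observe that the complex $C_b := (B\xrightarrow{\cdot b}B)$, placed in degrees $-1$ and $0$, is always a strictly perfect complex of $B$-modules; membership $b\in S$ is precisely the condition that $C_b$ lies in $\cat{SP}^A(B)$, i.e.\ that it is perfect as a complex of $A$-modules. Given $a,b\in S$, consider $C_{ab}$. The key point is the existence of a short exact sequence of complexes of $B$-modules
$$
0\mto C_b\mto C_{ab}\mto C_a\mto 0,
$$
obtained from the commuting ladder whose left square has vertical maps $\id$ and $\cdot a$ (so that $C_b\hookrightarrow C_{ab}$ via $(\id,\cdot a)$ in degrees $-1,0$) and whose quotient is $C_a$; one checks directly that the induced maps are chain maps and that the sequence is degreewise split exact. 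Since $\cat{SP}^A(B)$ is closed under extensions (it is a Waldhausen subcategory, and perfectness over $A$ is a two-out-of-three property for exact triangles), $C_b, C_a\in\cat{SP}^A(B)$ forces $C_{ab}\in\cat{SP}^A(B)$, i.e.\ $ab\in S$.

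\emph{Saturation.} The same short exact sequence handles all three cases at once, using that perfectness is stable under the two-out-of-three property in the triangulated category $\D(A)$: from the exact triangle $C_b\mto C_{ab}\mto C_a\mto C_b[1]$, if any two of $C_b, C_{ab}, C_a$ are perfect over $A$ then so is the third. Hence if two of $a, b, ab$ lie in $S$, so does the third. For this I would invoke that a cone of a morphism between perfect complexes is perfect, and that a complex $X$ with $\Cone(f\colon X\mto Y)$ and $Y$ perfect has $X$ perfect as well (rotate the triangle); these are elementary facts about $\cat{P}(A)$ that follow from the characterisation of perfect complexes as those quasi-isomorphic to strictly perfect ones, or may simply be cited.

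\emph{Main obstacle.} The only genuinely non-formal step is verifying that the displayed short exact sequence $0\to C_b\to C_{ab}\to C_a\to 0$ is correct — specifically, writing down the chain maps and confirming commutativity of the squares $\id\cdot(\cdot b) = (\cdot ab)\cdot\id$ wait, one must be careful: the left-hand inclusion $C_b\to C_{ab}$ should be $(\cdot a, \id)$ so that on degree $-1$ it is multiplication by $a$ and the square $a\cdot(\cdot ab)\overset{?}{=}(\cdot b)\cdot$... the correct ladder is the one making $C_b\to C_{ab}$ compatible, namely identity in degree $0$ and $\cdot a$ in degree $-1$ is wrong too; the clean way is to note $C_{ab}$ is the total complex of the bicomplex formed by $C_a$ and $C_b$ glued along $\cdot b\colon B\to B$, i.e.\ $C_{ab}\simeq C_b\oplus C_a$ as graded modules with a twisted differential, which immediately yields the sequence. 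I expect this small diagram-chase — choosing the maps so that all squares commute and the sequence is degreewise split — to be the one place requiring care; everything else is a direct appeal to the two-out-of-three property for perfect complexes.
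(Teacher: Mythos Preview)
Your overall strategy---relate $C_a$, $C_b$, $C_{ab}$ by a distinguished triangle and invoke the two-out-of-three property for perfect complexes---is exactly the paper's approach. The paper writes down the commutative ladder
\[
\xymatrix{
B\ar[r]^{=}\ar[d]^{\cdot a}&B\ar[r]^{\cdot a}\ar[d]^{\cdot ab}&B\ar[d]^{\cdot b}\\
B\ar[r]^{\cdot b}&B\ar[r]^{=}&B
}
\]
and reads the columns as the distinguished triangle $C_a\to C_{ab}\to C_b\to C_a[1]$ (this is the octahedral axiom applied to the factorisation $\cdot(ab)=(\cdot b)\comp(\cdot a)$). Then two-out-of-three for perfectness finishes the argument, just as you say.

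Where your write-up goes wrong is in trying to upgrade this triangle to a genuine short exact sequence of complexes $0\to C_b\to C_{ab}\to C_a\to 0$. No such sequence exists: in each degree it would read $0\to B\to B\to B\to 0$, which cannot be exact for $B\neq 0$ (certainly not ``degreewise split''). This is why your attempts to write down the chain maps keep failing---not because you haven't found the right formula, but because there is no formula to find. The map $C_a\to C_{ab}$ in the paper's triangle is $(\id,\cdot b)$ in degrees $-1,0$; it is a chain map since $(\cdot b)\comp(\cdot a)=\cdot(ab)$, but it is neither injective nor does it have cokernel isomorphic to $C_b$ as a complex---only the cone is quasi-isomorphic to $C_b$. (Incidentally, the order in the triangle is $C_a\to C_{ab}\to C_b$, not $C_b\to C_{ab}\to C_a$; this does not affect the two-out-of-three conclusion but is worth getting right.)

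So: drop the short exact sequence language, state directly that the ladder above gives a distinguished triangle, and your saturation paragraph then constitutes the entire proof.
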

\begin{proof}
The commutative diagram
$$
\xymatrix{
B\ar[r]^{=}\ar[d]^{\cdot a}&B\ar[r]^{\cdot a}\ar[d]^{\cdot ab}&B\ar[d]^{\cdot b}\\
B\ar[r]^{\cdot b}&B\ar[r]^{=}&B
}
$$
constitutes a distinguished triangle if one reads the columns as complexes. If in a distinguished triangle two of the three complexes are perfect, then so is the third.
\end{proof}

\begin{lem}\label{lem:uniqueness of S}
Assume that there exists a left denominator set $S'$ such that $\cat{SP}^A(B)$ is precisely the subcategory of those complexes in $\cat{SP}(B)$ whose cohomology modules are $S'$-torsion. Then the set $S$ has the same property and is the saturation of $S'$, i.\,e.\ it is the left denominator set consisting of those elements in $B$ that become a unit in $B_{S'}$.
\end{lem}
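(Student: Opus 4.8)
The plan is to translate the statement into a fact about the quotient ring $B_{S'}$ via flat base change. First I would set up the dictionary. Since $B_{S'}$ is flat as a right $B$-module, for every complex $\cmplx{Q}$ in $\cat{SP}(B)$ one has $\HF^n(B_{S'}\tensor_B\cmplx{Q})=B_{S'}\tensor_B\HF^n(\cmplx{Q})$ for all $n$; moreover, for an \emph{arbitrary} $B$-module $M$, the module $M$ is $S'$-torsion if and only if $B_{S'}\tensor_B M=0$, because the kernel of $M\to B_{S'}\tensor_B M$ is precisely the $S'$-torsion submodule of $M$. Hence a complex $\cmplx{Q}$ in $\cat{SP}(B)$ has $S'$-torsion cohomology --- equivalently, by hypothesis, lies in $\cat{SP}^A(B)$ --- if and only if $B_{S'}\tensor_B\cmplx{Q}$ is acyclic.

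Next I would apply this to the two-term complex $B\xrightarrow{\cdot b}B$ from Definition~\ref{defn:can Ore set}, whose cohomology modules are $B/Bb$ and the left annihilator $\{x\in B:xb=0\}$. After applying $B_{S'}\tensor_B(-)$ it becomes right multiplication by $b$ on the free rank-one left $B_{S'}$-module $B_{S'}$, which is bijective exactly when the image of $b$ in $B_{S'}$ is a unit (surjectivity gives $c$ with $cb=1$, and then injectivity applied to $bc-1$, which satisfies $(bc-1)b=0$, forces $bc=1$). Therefore
$$
S=\{b\in B:\ b\in B_{S'}^{\times}\},
$$
i.e.\ $S$ is the saturation of $S'$; in particular $S'\subseteq S$. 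By the standard theory of saturated left denominator sets (cf.\ \cite[Ch.~10]{GW:NoncommNoethRings}), $S$ is again a left denominator set and the canonical ring homomorphism $B_{S'}\to B_S$ is an isomorphism.

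The ``same property'' assertion is then formal: since $B_S=B_{S'}$, a $B$-module is $S$-torsion if and only if it is $S'$-torsion (both being equivalent to annihilation by $B_{S'}\tensor_B(-)$), so the full subcategory of $\cat{SP}(B)$ consisting of complexes with $S$-torsion cohomology coincides with the one consisting of complexes with $S'$-torsion cohomology, which by hypothesis is $\cat{SP}^A(B)$.

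I expect the only genuinely delicate step to be running the flat-base-change argument without any noetherian assumption on $B$: the cohomology modules of a strictly perfect complex need not be finitely generated, so one must use the form of the torsion criterion valid for all modules, and be careful that ``$b$ becomes a unit in $B_{S'}$'' means exactly that right multiplication by $b$ is bijective on $B_{S'}$. Verifying that the saturation $S$ is a left denominator set with $B_S=B_{S'}$, and concluding that $\cat{SP}^A(B)$ is the category of $S$-torsion complexes, are then both routine.
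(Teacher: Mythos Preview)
Your proposal is correct and follows essentially the same approach as the paper's proof: both argue that $b\in S$ iff the complex $B\xrightarrow{\cdot b}B$ lies in $\cat{SP}^A(B)$, which by hypothesis is equivalent to having $S'$-torsion cohomology, which via flat base change is equivalent to $b$ being a unit in $B_{S'}$; the rest follows from $B_S=B_{S'}$. The paper compresses all of this into two sentences, whereas you have spelled out the flat-base-change dictionary and the unit criterion explicitly, but the underlying argument is the same.
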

\begin{proof}
An element $b\in B$ is a unit in $B_{S'}$ if and only if the complex $B\xrightarrow{\cdot b}B$ is $S'$-torsion and thus perfect. Hence, $B_{S}=B_{S'}$ and a complex is $S$-torsion precisely if it is $S'$-torsion.
\end{proof}

The following theorem extends \cite[Thm. 2.25]{SchnVen:LocCompSPSR}.

\begin{thm}\label{thm:existence of Ore set}
Assume that $B$ is a skew power series ring over a noetherian pseudocompact coefficient ring $A$ and $I$ is an open two-sided ideal of $A$ stable under $\sigma$ and $\delta$ and contained in the Jacobson radical of $A$. Then $S$ is a left denominator set and that $\cat{SP}^A(B)$ is precisely the subcategory of those complexes in $\cat{SP}(B)$ whose cohomology modules are $S$-torsion.
\end{thm}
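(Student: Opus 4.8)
The plan is to follow the proof of \cite[Thm.~2.25]{SchnVen:LocCompSPSR}, concentrating all use of the ideal $I$ in a single Weierstrass-type preparation. Write $\algc{A}=A/I$ and $C=B/IB=\algc{A}[[t;\algc{\sigma},\algc{\delta}]]$, and for $b\in B$ let $\algc{b}\in C$ denote its image. Since $I$ is open, $\algc{A}$ is Artinian; since $\delta$ is topologically nilpotent and $I$ is $\sigma$- and $\delta$-stable, $C$ is again a skew power series ring, now over an Artinian coefficient ring and with $\algc{\delta}$ nilpotent, and the same is true of $B/I^kB=(A/I^k)[[t;\ldots]]$ for every $k\ge1$. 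Because $I\subseteq\Jac(A)$, Nakayama's lemma shows that every open $A$-submodule of $A$ contains a power of $I$, whence $\bigcap_kI^kB=0$. Finally, as $A$ is noetherian, $B$ is a noetherian ring \cite{SchnVen:DimThSPSR}, and $B\isomorph\prod_{n\ge0}A$ as a left $A$-module is flat over $A$ (a product of flat modules over a noetherian ring is flat). Hence any $\cmplx{P}$ in $\cat{SP}(B)$ is a bounded complex of flat $A$-modules, so of finite Tor-amplitude over $A$, and therefore lies in $\cat{SP}^A(B)$ if and only if each $\HF^n(\cmplx{P})$ is finitely generated over $A$. Applied to $B\xrightarrow{\cdot b}B$ this gives the criterion: $b\in S$ if and only if $B/Bb$ and $\set{x\in B\mid xb=0}$ are both finitely generated over $A$.

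I then verify the first half of this criterion for monic polynomials: if $g=t^N-\sum_{n<N}a_nt^n$ with $a_n\in A$, a direct analysis of the multiplication in $B$ (as in \cite[Sect.~1]{SchnVen:DimThSPSR}) yields the division decomposition $B=Bg\oplus\bigoplus_{n<N}At^n$ of left $A$-modules, so $B/Bg\isomorph A^N$ is $A$-finite and right multiplication by $g$ is injective; thus $g\in S$. The main point is a partial converse: if $B/Bb$ is finitely generated over $A$ then $\set{x\in B\mid xb=0}=0$, and hence $b\in S$. Indeed, $C/C\algc{b}=(B/Bb)/I(B/Bb)$ is then finitely generated over the Artinian ring $\algc{A}$, hence of finite length, so $\algc{b}\ne0$. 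The skew Weierstrass preparation theorem over $\algc{A}$ --- proved by successive approximation, with the nilpotence of $\algc{\delta}$ and the $t$-adic completeness of $C$ making the recursion converge --- produces $\algc{b}=\algc{u}\,\algc{p}$ with $\algc{u}\in C^{\times}$ and $\algc{p}$ a monic polynomial, so by the division decomposition $\set{x\in C\mid x\algc{b}=0}=0$. The image of $b$ in $(A/\Jac(A))[[t;\ldots]]$ equals the nonzero image of $\algc{u}\,\algc{p}$, so the same preparation theorem applies over $A/I^k$ and shows that $b$ is not a zero divisor modulo $I^kB$ for any $k$; hence $xb=0$ forces $x\in\bigcap_kI^kB=0$.

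With this in hand the denominator-set axioms follow quickly. Multiplicative closedness and saturation are already known. The annihilator condition is immediate: if $s\in S$ then $B/Bs$ is $A$-finite, so the preceding paragraph gives $\set{x\in B\mid xs=0}=0$, whence $bs=0$ forces $b=0$ and $1\in S$ works. For the Ore condition, given $s\in S$ and $b\in B$, write $\algc{b}$ for the image of $b$ in $B/Bs$ and set $J=\set{x\in B\mid xb\in Bs}$; then $B/J\isomorph B\algc{b}\subseteq B/Bs$ is a noetherian $A$-module, so finitely many of the $t^n\algc{b}$ generate it over $A$, giving $a_0,\dots,a_{N-1}\in A$ with $s':=t^N-\sum_{n<N}a_nt^n\in J$. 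By the preceding paragraph $s'\in S$, and $s'b\in Bs$ supplies the $b'\in B$ with $s'b=b's$; so $S$ is a left denominator set. It remains, by the criterion of the first paragraph, to match ``finitely generated over $A$'' with ``$S$-torsion'' on the cohomology modules. A $B$-module $M$ finitely generated over $A$ is $S$-torsion, because for $m\in M$ the module $Bm\isomorph B/\set{x\in B\mid xm=0}$ is $A$-finite, so $\set{x\in B\mid xm=0}$ contains a monic polynomial, which lies in $S$. Conversely, a $B$-module that is $S$-torsion and finitely generated over the noetherian ring $B$ --- such as $\HF^n(\cmplx{P})$ for $\cmplx{P}$ in $\cat{SP}(B)$ --- is annihilated by a single $s\in S$ (by the Ore condition), hence finitely generated over $B/BsB$, a quotient of the $A$-finite module $B/Bs$, so it is finitely generated over $A$. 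This completes the proof.

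The one genuinely difficult ingredient is the skew Weierstrass preparation over the Artinian ring $\algc{A}$ used in the second paragraph: in contrast to \cite{SchnVen:LocCompSPSR}, where the residue ring modulo the radical is a field carrying the zero derivation, here the successive-approximation argument must cope with the nilpotent twist $\algc{\delta}$ and with zero divisors in $\algc{A}$, and one has to check that the property ``unit times monic polynomial'' persists under the reductions modulo the powers $I^k$ appearing in the lifting step.
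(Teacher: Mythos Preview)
Your route differs from the paper's in a key organisational choice. The paper does not attempt to analyse $S$ directly; instead it introduces the auxiliary set $S'=\bigcup_{n\ge0}(t^n+J_1)$ with $J_1=\prod_{n\ge0}It^n$, checks by a bare-hands coefficient comparison modulo the ideals $J_k$ that every element of $S'$ is a non-zero-divisor, and obtains the Ore condition from the observation that on any $B$-module $M$ finitely generated over $A$ the element $t$ acts topologically nilpotently for the $I$-adic topology (so $t^nm\in IM$ for some $n$, giving an annihilator in $S'$). Once $S'$ is a left denominator set and $\cat{SP}^A(B)$ is identified with the $S'$-torsion complexes, $S$ is recovered as the saturation of $S'$ via Lemma~\ref{lem:uniqueness of S}. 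No Weierstrass preparation appears anywhere.

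Your alternative via monic polynomials and the division decomposition $B=Bg\oplus\bigoplus_{n<N}At^n$ is sound in outline, and your Ore argument and the identification of $A$-finite with $S$-torsion cohomology both work. The real gap is precisely the one you flag: a skew Weierstrass preparation over an Artinian coefficient ring with zero divisors and a nonzero nilpotent derivation is not a standard result, and your lifting to $A/I^k$ is only asserted, not argued. Fortunately this detour is unnecessary. For the annihilator condition you only need that each $s\in S$ is a non-zero-divisor, and membership in $S$ already forces $K=\{x\in B:xs=0\}$ to be finitely generated over $A$ (it is a cohomology group of the perfect $A$-complex $B\xrightarrow{\cdot s}B$). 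Now apply your own key lemma to any $x\in K$: the cyclic module $Bx\subseteq K$ is $A$-finite, so some monic $g$ satisfies $gx=0$, and the division decomposition for $g$ gives $x=0$. This replaces the entire Weierstrass paragraph.

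One small slip in the last step: in the noncommutative setting an $S$-torsion module finitely generated over $B$ need not be annihilated by a single $s\in S$ (the condition $s\cdot Bm_i=0$ is much stronger than $sm_i=0$). What you actually use is that each generator $m_i$ is killed by some $s_i\in S$, so each $Bm_i$ is a quotient of the $A$-finite module $B/Bs_i$, and hence $M=\sum_iBm_i$ is $A$-finite. The paper's inductive cone argument on the complex is one way to package this; your module-level version is fine once phrased correctly.
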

\begin{proof}
Since $A$ is noetherian, the ideal powers $I^k$ constitute a fundamental system of open neighbourhoods of 0 \cite[Sect.~VII]{Warner:TopRings}. Since $I$ is stable under $\sigma$ and $\delta$, we deduce that
$$
J_k=\prod_{n=0}^{\infty}I^kt^n
$$
is a closed two-sided ideal of $B$ for each positive integer $k$.
Consider the set
$$
S'=\bigcup_{n=0}^{\infty}t^n+J_1.
$$
Clearly, $S'$ is multiplicatively closed. Furthermore, the annihilator $\Ann_B t^n+f$ is zero for $f\in J_1$. Indeed, if $gt^n+gf=0$ then reduction modulo $J_k$ shows that the coefficients of $g$ must lie in
$$
\bigcap_{k=1}^\infty I^k=0.
$$
For any $f\in J$ we have
$$(B/B(t^n+f))/I(B/B(t^n+f))=B/Bt^n+J_1$$
which is clearly finitely generated as $A/I$-module. By the topological Nakayama lemma we conclude that $B/B(t^n+f)$ is finitely generated as $A$-module.

Assume that $M$ is a $B$-module which is finitely generated over $A$ and let $m\in M$. Since $A$ is noetherian, the module $Bm$ is also finitely generated over $A$. In particular, it is pseudocompact for the $I$-adic topology. Since $t$ is topologically nilpotent, there exists a positive integer $n$ such that $t^nm\in J_1m$, i.\,e.\ there exists a $g\in J_1$ such that $(t^n+g)m=0$.
Applying this to $B/B(t^n+f)$ we conclude that $S'$ satisfies the Ore condition, whereas the annihilator condition is automatically satisfied. Hence, $S'$ is a left denominator set.

Let now $\cmplx{P}$ be a strictly perfect complex of $B$-modules. Since $B$ is a direct product of free $A$-modules and $A$ is noetherian, any finitely generated projective $B$-module is flat as $A$-module. Hence, $\cmplx{P}$ has finite flat dimension over $A$ and is perfect as complex of $A$-modules if and only if its cohomology groups are finitely generated as $A$-modules. It follows immediately that every complex in $\cat{SP}^A(B)$ is $S'$-torsion.

Conversely, assume that $\cmplx{P}$ is $S'$-torsion. Without loss of generality we may assume that $\cmplx{P}$ is concentrated in the degrees $0$ to $n$, that $\HF^n(P)\neq 0$ and that all $P^i$ for $i>0$ are in fact free $B$-modules. We do induction on $n$. In the case that the finitely generated projective $B$-module $P^0$ happens to be $S'$-torsion we choose a system of generators of $P^0$ and an $s\in S'$ that kills all the generators and obtain a surjection $B^k/B^ks\mto P^0$, which shows that $P^0$ is also finitely generated and projective as $A$-module. If $n=1$ and $\HF^0(P)=0$ we proceed similarly with $\HF^1(P)$. If $\HF^0(P)\neq 0$ or if $n>1$ we find a morphism from a shift of the complex $B^k\xrightarrow{\cdot s}B^k$ to $\cmplx{P}$ such that we get a surjection onto $\HF^n(P)$. The cone of this morphism is strictly perfect as complex of $B$-modules, $S'$-torsion and concentrated in the degrees $0$ to $n-1$ if $n>1$. If $n=1$, it is concentrated in degrees $-1$ to $0$, but the zeroth cohomology group is the only one that might not vanish. This shows that $\cmplx{P}$ is in $\cat{SP}^A(B)$. We now apply Lemma~\ref{lem:uniqueness of S} to $S'$.
\end{proof}

\begin{rem}
If one drops the assumption that $A$ is noetherian one can still show that the annihilators of elements in the set $S'$ are trivial. For this, one uses that $I$ is transfinitely nilpotent \cite[Thm.~33.21]{Warner:TopRings}. Assuming that $S'$ also satisfies the Ore condition, the above proof shows that every $S'$-torsion complex is in $\cat{SP}^A(B)$. However, we were not able to prove the converse nor verify the Ore condition in this generality.
\end{rem}


\begin{prop}\label{prop:WeibYao}
Assume that $S$ is a left denominator set such that $\cat{SP}^A(B)$ is precisely the subcategory of those complexes in $\cat{SP}(B)$ whose cohomology modules are $S$-torsion. The Waldhausen exact functor
$$
\cat{SP}_A(B)\mto \cat{SP}(B_S),\qquad \cmplx{P}\mapsto B_S\tensor_B \cmplx{P},
$$
induces isomorphisms
$$
\KTh_n(\cat{SP}_A(B))\isomorph \KTh_n(B_S)
$$
for $n>0$. The groups $\KTh_n(\cat{SP}^A(B))$ can be identified with the relative $\KTh$-groups $\KTh_n(B,B_S)$ for all $n\geq 0$. \end{prop}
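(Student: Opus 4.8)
The plan is to reduce the whole statement to a single comparison fact about the base change functor and then read everything off from the localisation fibre sequence already available for $\cat{SP}^A(B)\to\cat{SP}(B)\to\cat{SP}_A(B)$. Concretely, I would prove that the functor $F\colon\cat{SP}_A(B)\to\cat{SP}(B_S)$, $\cmplx{P}\mapsto B_S\tensor_B\cmplx{P}$, is Waldhausen exact and induces an isomorphism on $\KTh_n$ for $n\geq1$ and a monomorphism on $\KTh_0$; the two assertions of the proposition then follow formally.

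First I would verify that $F$ is well defined and Waldhausen exact. Since $B_S$ is flat as a right $B$-module, the functor $B_S\tensor_B-$ is exact, carries a strictly perfect complex of $B$-modules to a strictly perfect complex of $B_S$-modules, sends cofibrations (injections with strictly perfect cokernel) to cofibrations, and, being a left adjoint, preserves pushouts along cofibrations. For the weak equivalences one invokes the hypothesis on $S$: a morphism $f$ is a weak equivalence in $\cat{SP}_A(B)$ iff $\Cone f$ lies in $\cat{SP}^A(B)$, i.e.\ has $S$-torsion cohomology, which by flatness is equivalent to $\HF^*(B_S\tensor_B\Cone f)=0$, i.e.\ to $B_S\tensor_B f$ being a quasi-isomorphism; the converse direction uses that, by hypothesis, a strictly perfect complex with $S$-torsion cohomology already belongs to $\cat{SP}^A(B)$. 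In particular the composite $\cat{SP}(B)\to\cat{SP}_A(B)\xrightarrow{F}\cat{SP}(B_S)$ is the usual base change functor, so $F$ gives a commutative ladder relating the homotopy fibre sequence $\Kspace(\cat{SP}^A(B))\to\Kspace(B)\to\Kspace(\cat{SP}_A(B))$ (which is one by \cite[Thm.~1.8.2]{ThTr:HAKTS+DC}) to the tautological one $\Fibre(\Kspace(B)\to\Kspace(B_S))\to\Kspace(B)\to\Kspace(B_S)$, with the identity as the middle vertical map.

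It remains to establish the comparison fact; this is the non-commutative localisation theorem of Weibel and Yao \cite{WeibYao:Localization} (a $B_S$-version of the Thomason--Trobaugh localisation theorem). To reprove it in the present framework I would argue as follows. By the Waldhausen fibration theorem $\Kspace(\cat{SP}_A(B))$ is the cofibre of $\Kspace(\cat{SP}^A(B))\to\Kspace(B)$. Using that strictly perfect $B$-complexes are compact in the derived category of $B$ together with the adjunction between $B_S\tensor_B-$ and restriction of scalars, every morphism $B_S\tensor_B\cmplx{P}\to\cmplx{D}$ with $\cmplx{D}$ quasi-isomorphic to an extended complex $B_S\tensor_B\cmplx{Q}$ ($\cmplx{Q}$ strictly perfect over $B$) is, up to an automorphism of $\cmplx{D}$ by an element of $S$, induced from a morphism over $B$; feeding this, via mapping cylinders, into the approximation theorem \cite[Thm.~1.9.1]{ThTr:HAKTS+DC} identifies $\KTh_*(\cat{SP}_A(B))$ with the $\KTh$-theory of the full Waldhausen subcategory of $\cat{SP}(B_S)$ of complexes quasi-isomorphic to extended ones. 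Since for every strictly perfect $B_S$-complex $\cmplx{D}'$ there is a $\cmplx{D}''$ with $\cmplx{D}'\oplus\cmplx{D}''$ a bounded complex of free $B_S$-modules, hence extended from $B$, this subcategory is cofinal, and the Waldhausen cofinality theorem \cite[Thm.~1.10.1]{ThTr:HAKTS+DC} yields the isomorphism on $\KTh_n$ for $n\geq1$ and the monomorphism on $\KTh_0$ (the image being $\mathrm{im}(\KTh_0(B)\to\KTh_0(B_S))$). Granting this, assertion~(1) is immediate, and for $\KTh_n(\cat{SP}^A(B))\isomorph\KTh_n(B,B_S)$ ($n\geq0$) one compares the long exact homotopy sequences of the ladder from the second paragraph: the vertical maps being the identity on $\KTh_*(B)$, isomorphisms in positive degrees and a monomorphism in degree $0$, the five lemma (and the sharp four lemma in degree $0$) forces $\KTh_n(\cat{SP}^A(B))\to\KTh_n(B,B_S)$ to be an isomorphism for every $n\geq0$. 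I expect the genuine content---and the main obstacle---to be exactly this Weibel--Yao comparison, and in particular its $\KTh_0$-cofinality subtlety (not every finitely generated projective $B_S$-module is extended from $B$); everything else is a formal chase with the fibre sequence supplied by \cite[Thm.~1.8.2]{ThTr:HAKTS+DC}.
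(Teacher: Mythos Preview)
Your proposal is correct and takes essentially the same approach as the paper: both reduce the statement to the localisation theorem of Weibel--Yao, with the identification of $\KTh_n(\cat{SP}^A(B))$ with $\KTh_n(B,B_S)$ read off from the comparison of fibre sequences. The paper's proof is a one-line citation of \cite{WeibYao:Localization}; you go further and sketch the internal mechanism of that theorem (approximation to the subcategory of extended complexes, then cofinality), which is extra exposition rather than a different route.
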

\begin{proof}
This is a direct consequence of the localisation theorem in \cite{WeibYao:Localization}.
\end{proof}

\begin{cor}\label{cor:splitting sps-rings with S}
Let $B$ be a skew power series ring over a noetherian pseudocompact ring $A$ and let $\gamma\in B$ be a unit such that
$$
\sigma(a)=\gamma a\gamma^{-1}
$$
for every $a\in A$. Assume further that $I$ is an open two-sided ideal of $A$ which is contained in the Jacobson radical and stable under $\sigma$ and $\delta$. Then $S$ is a left denominator set in $B$ and for every $n\geq 0$
$$
\KTh_{n+1}(B_S)\isomorph \KTh_{n+1}(B)\oplus\KTh_{n}(B,B_S)
$$
\end{cor}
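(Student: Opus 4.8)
The plan is to assemble the three main results established above. First I would invoke Theorem~\ref{thm:existence of Ore set}: the hypotheses of the corollary---$A$ noetherian and $I$ an open two-sided ideal stable under $\sigma$ and $\delta$ and contained in the Jacobson radical---are exactly those of that theorem, so the set $S$ of Definition~\ref{defn:can Ore set} is a left denominator set in $B$ and $\cat{SP}^A(B)$ is precisely the subcategory of those complexes in $\cat{SP}(B)$ whose cohomology modules are $S$-torsion. This already proves the first assertion of the corollary and places us in the setting required for Proposition~\ref{prop:WeibYao}.

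Next I would apply Theorem~\ref{thm:splitting abstract}, whose only additional hypothesis---the existence of a unit $\gamma\in B$ with $\sigma(a)=\gamma a\gamma^{-1}$ for all $a\in A$---is part of the corollary's hypotheses. This yields, for every $n\geq 0$, a short split-exact sequence
$$
0\mto\KTh_{n+1}(B)\mto\KTh_{n+1}(\cat{TSP}_A(B))\mto\KTh_n(\cat{TSP}^A(B))\mto 0.
$$
By the Waldhausen approximation theorem \cite[Thm.~1.9.1]{ThTr:HAKTS+DC} the natural functors $\cat{SP}_A(B)\mto\cat{TSP}_A(B)$ and $\cat{SP}^A(B)\mto\cat{TSP}^A(B)$ induce isomorphisms on $\KTh$-groups compatibly with the two localisation sequences (as recorded in the discussion preceding Theorem~\ref{thm:splitting abstract}), so one may replace $\cat{TSP}_A(B)$ by $\cat{SP}_A(B)$ and $\cat{TSP}^A(B)$ by $\cat{SP}^A(B)$ in the displayed sequence.

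Finally, Proposition~\ref{prop:WeibYao} identifies $\KTh_{n+1}(\cat{SP}_A(B))$ with $\KTh_{n+1}(B_S)$---which is legitimate since $n+1>0$---and $\KTh_n(\cat{SP}^A(B))$ with the relative group $\KTh_n(B,B_S)$ for all $n\geq 0$. Substituting these identifications, the displayed sequence becomes the short split-exact sequence
$$
0\mto\KTh_{n+1}(B)\mto\KTh_{n+1}(B_S)\mto\KTh_n(B,B_S)\mto 0,
$$
and its splitting gives the claimed decomposition $\KTh_{n+1}(B_S)\isomorph\KTh_{n+1}(B)\oplus\KTh_n(B,B_S)$. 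I do not expect a genuine obstacle here; the proof is essentially bookkeeping, the only delicate point being to make sure the various $\KTh$-group identifications are compatible with the boundary maps and with the section constructed in Theorem~\ref{thm:splitting abstract}. One may note in passing that Proposition~\ref{prop:WeibYao} does not identify the degree-zero groups $\KTh_0(\cat{SP}_A(B))$ and $\KTh_0(B_S)$, but this is irrelevant since the leftmost nontrivial term above sits in degree $n+1\geq 1$.
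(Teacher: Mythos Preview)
Your proposal is correct and follows exactly the paper's approach: the paper's proof simply cites Theorem~\ref{thm:splitting abstract}, Theorem~\ref{thm:existence of Ore set}, and Proposition~\ref{prop:WeibYao}. Your only addition is spelling out the passage from $\cat{TSP}$ to $\cat{SP}$ via the approximation theorem, which the paper had already recorded in the discussion before Theorem~\ref{thm:splitting abstract}.
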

\begin{proof}
 This follows from Theorem~\ref{thm:splitting abstract}, Theorem~\ref{thm:existence of Ore set} and Prop.~\ref{prop:WeibYao}.
\end{proof}

\begin{rem}\label{rem:existence of stable I}
If in the above corollary, $B$ is itself a (left and right) pseudocompact ring, then the existence of $I$ is automatic. Indeed, the definitions of $\sigma$ and $\delta$ can be extended to $B$ and every two-sided ideal of $B$ is then stable under $\sigma$ and $\delta$. The intersections of the open two-sided ideals of $B$ with $A$ form a fundamental system of neighbourhoods of $0$ in $A$. Since $A$ is noetherian, the Jacobson radical of $A$ is open. Hence, there exists an open two-sided ideal $J$ of $B$ such that $J\cap A$ is contained  in the Jacobson radical.
\end{rem}

\section{Applications to Completed Group Rings}\label{sec:Applications}

\begin{defn}
Let $G$ be a pro-finite group and $\mathcal{U}_G$ the directed set of open normal subgroups. For any topological ring $R$ we define the \emph{completed group ring} of $G$ over $R$ to be the topological ring
$$
R[[G]]=\varprojlim_{U\in\mathcal{U}_G}R[G/U].
$$
\end{defn}

We will now fix a pseudocompact coefficient ring $R$ and a prime $p$ which is topologically nilpotent in $R$. Furthermore, let $G$ be a pro-finite group which is the semi-direct product of a closed normal subgroup $H$ and a closed subgroup $\Gamma\isomorph\Int_p$. We also fix a topological generator $\gamma\in \Gamma$. The following proposition is a harmless generalisation of \cite[Ex. 5.1]{Ven:CharElements}.

\begin{prop}
The completed group ring $R[[G]]$ is a left and right pseudocompact skew power series ring over $R[[H]]$ with
\begin{align*}
t&=\gamma-1,\\
\sigma(a)&=\gamma a\gamma^{-1},\\
\delta(a)&=\sigma(a)-a
\end{align*}
for $a\in R[[H]]$.
\end{prop}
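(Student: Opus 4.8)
The plan is as follows. The commutation relation~(4) and the $\sigma$-derivation property of $\delta$ are purely formal: since $t=\gamma-1$ and $\sigma(a)=\gamma a\gamma^{-1}$, for $a\in R[[H]]$ one computes $ta=\gamma a-a=\sigma(a)\gamma-a=\sigma(a)(1+t)-a=\sigma(a)t+\delta(a)$ with $\delta=\sigma-\id$, and $\delta(ab)=\sigma(a)\sigma(b)-ab=\sigma(a)(\sigma(b)-b)+(\sigma(a)-a)b=\sigma(a)\delta(b)+\delta(a)b$. Conjugation by the unit $\gamma\in R[[G]]^{\times}$ is a continuous automorphism of the topological ring $R[[G]]$; since $H$ is normal in $G$ it restricts to a continuous automorphism $\sigma$ of the closed subring $R[[H]]$, with inverse conjugation by $\gamma^{-1}$. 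That $R[[H]]$ is a closed subring of $R[[G]]$ and is itself a left and right pseudocompact ring are standard facts about completed group rings of closed subgroups; concretely, $R[[H]]=\varprojlim_{\mathfrak a,V}(R/\mathfrak a)[H/V]$ over open ideals $\mathfrak a\subseteq R$ and open normal subgroups $V$ of $H$, and each $(R/\mathfrak a)[H/V]$ has finite length over $R/\mathfrak a$, hence finite length as a left and as a right module over itself. This settles condition~(1) and the hypotheses on the coefficient ring $A=R[[H]]$.

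The substance is condition~(3): that $R[[G]]=\prod_{n\ge 0}R[[H]]t^{n}$ as a topological left $R[[H]]$-module, with $t=\gamma-1$. First I would check that $t$ is topologically nilpotent in $R[[G]]$: in a finite quotient $(R/\mathfrak a)[G/U]$ the image $\bar\gamma$ lies in the discrete group $(R/\mathfrak a)[G/U]^{\times}$, so, being the image of the compact procyclic pro-$p$ group $\Gamma$, it has finite order $p^{c}$; then $\bar t^{p^{c}}=(\bar\gamma-1)^{p^{c}}$ lies in $p\cdot(R/\mathfrak a)[G/U]$ by the binomial theorem, and $p$ is nilpotent there, so $\bar t$ is nilpotent. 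Hence the left $R[[H]]$-linear map
\[
\Phi\colon\prod_{n=0}^{\infty}R[[H]]t^{n}\longrightarrow R[[G]],\qquad (a_{n})_{n}\longmapsto\sum_{n=0}^{\infty}a_{n}t^{n},
\]
is a well-defined continuous morphism of pseudocompact left $R[[H]]$-modules ($R[[G]]$ being pseudocompact over $R[[H]]$ by the same finite-length argument). To prove that $\Phi$ is an isomorphism I would reduce, by replacing $R$ with each $R/\mathfrak a$ and passing to the limit, to the case that $p$ is nilpotent in $R$; then, using that $H\triangleleft G$ and that $\gamma$ acts with finite order on every $\Gamma$-stable quotient $H/V$, I would show that the subgroups $V\cdot\Gamma^{p^{c}}$ (with $V$ an open $\Gamma$-stable normal subgroup of $H$ and $c$ large) are open normal in $G$, are cofinal among all open normal subgroups, and satisfy $G/(V\cdot\Gamma^{p^{c}})\isomorph(H/V)\rtimes(\Gamma/\Gamma^{p^{c}})$. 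Consequently $R[G/(V\Gamma^{p^{c}})]\isomorph R[H/V]\tensor_{R}R[\Gamma/\Gamma^{p^{c}}]$ as left $R[H/V]$-modules, free on the images of $1,t,\dots,t^{p^{c}-1}$; taking the inverse limit over $V$ and $c$ — together with the classical one-variable identification $\varprojlim_{c}R[\Gamma/\Gamma^{p^{c}}]\isomorph\prod_{n\ge 0}Rt^{n}$, which is valid because $p$ is nilpotent in $R$ — identifies $R[[G]]$ with $\prod_{n}R[[H]]t^{n}$ in a way that recovers $\Phi$.

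It remains to handle condition~(2) and the topological nilpotence of $\delta$. Condition~(2), that $R[[G]]$ is pseudocompact as $R[[H]]^{\op}$-module — as well as left and right pseudocompactness of $R[[G]]$ itself — follows from the finite-length description $R[[G]]=\varprojlim(R/\mathfrak a)[G/U]$ applied to the relevant module structures. For $\delta=\sigma-\id$: given an open ideal $I$ of $R[[H]]$, I would replace it by the smaller $\sigma$-stable open ideal $\ker(R[[H]]\mto(R/\mathfrak a)[H/V])$ with $V$ $\Gamma$-stable, on whose quotient $\sigma$ acts with finite order $p^{k}$; then $(\sigma-\id)^{p^{k}}\equiv\sigma^{p^{k}}-\id=0$ modulo $p$, so $(\sigma-\id)^{p^{k}}=p\,\theta$ for an $R$-linear operator $\theta$ on the quotient, whence $(\sigma-\id)^{Np^{k}}=p^{N}\theta^{N}=0$ there for $N$ large and $\delta^{Np^{k}}(R[[H]])\subseteq I$. (Topological nilpotence of $\delta$ is in any case forced once conditions~(1)--(4) hold.) Together with the bimodule statement this shows $R[[G]]$ is a left and right pseudocompact skew power series ring over $R[[H]]$ with the stated data, generalising \cite[Ex.~5.1]{Ven:CharElements}.

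The main obstacle is condition~(3), and within it the passage to the inverse limit: the open normal subgroups of $G$ need not respect the decomposition $G=H\rtimes\Gamma$, which is why one must isolate the cofinal family $V\cdot\Gamma^{p^{c}}$ that does, and why one first reduces to the case $p$ nilpotent so that the elementary identification of $R[[\Gamma]]$ with $\prod_{n}Rt^{n}$ is available.
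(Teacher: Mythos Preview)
Your proposal is correct. The substantive difference from the paper lies in how condition~(3) is established. The paper argues via the completed tensor product machinery of Section~\ref{sec:pseudocompact modules}: it observes that $R[[H]]$ is pro-discrete over $\Int_p$ (since $p$ is topologically nilpotent), verifies that the multiplication map
\[
R[[H]]\ctensor_{\Int_p}\Int_p[[\Gamma]]\longrightarrow R[[G]]
\]
is an isomorphism of pseudocompact $R[[H]]$-modules, and then invokes the classical Iwasawa isomorphism $\Int_p[[\Gamma]]\isomorph\Int_p[[t]]$. You instead reduce to $R$ discrete with $p$ nilpotent, exhibit the cofinal family $V\cdot\Gamma^{p^c}$ of open normal subgroups of $G$ compatible with the semidirect product decomposition, read off the free-module statement on each finite quotient $R[(H/V)\rtimes(\Gamma/\Gamma^{p^c})]$, and pass to the limit using the one-variable identification $R[[\Gamma]]\isomorph\prod_{n\geq 0}Rt^n$ over $R$ rather than over $\Int_p$.

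Both routes ultimately rest on the same structural fact, namely that multiplication $R[[H]]\times R[[\Gamma]]\to R[[G]]$ induces a topological $R[[H]]$-module isomorphism, combined with the power-series description of $R[[\Gamma]]$. The paper's version is shorter because it can quote Propositions~\ref{prop:commuting with filtered limits} and~\ref{prop:existence base change}; yours is more self-contained and makes the group theory (normality of $V\Gamma^{p^c}$, cofinality) explicit, which the paper suppresses entirely in the phrase ``one then verifies''. You are also more careful than the paper in spelling out the topological nilpotence of $t$ and of $\delta$, which the paper does not address explicitly.
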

\begin{proof}
It is obvious that the completed group rings are pseudocompact and that $R[[G]]$ is pseudocompact as $R[[H]]$-$R[[H]]$-bimodule. Since $H$ is normal, it is also easy to verify that $\sigma$ is a continuous ring automorphism of $R[[H]]$ and that $\delta$ is a continuous $\sigma$-derivation.
It remains to show that
$$
R[[G]]=\prod_{n=0}^{\infty}R[[H]]t^n
$$
as $R[[H]]$-module. The topological nilpotence of $p$ implies that $R[[H]]$ is pro-discrete over $\Int_p$. One then verifies that
$$
R[[H]]\ctensor_{\Int_p}\Int_p[[\Gamma]]\mto R[[G]],\qquad a\ctensor x\mapsto ax,
$$
is an isomorphism of pseudocompact $R[[H]]$-modules. By a classical result from Iwasawa theory \cite[Thm.~7.1]{Wa:ICF}
$$
\Int_p[[\Gamma]]\mto \Int_p[[t]],\qquad \gamma\mapsto t+1,
$$
defines a topological ring isomorphism. Hence, the system $(t^n)_{n\geq 0}$ is a topological $R[[H]]$-basis of $R[[G]]$.
\end{proof}

\begin{cor}
Let $R$, $G$, and $H$ be as above. For any $n\geq 0$,
$$
\KTh_{n+1}(\cat{TSP}_{R[[H]]}(R[[G]]))\isomorph \KTh_{n+1}(R[[G]])\oplus \KTh_{n}(\cat{TSP}^{R[[H]]}(R[[G]])).
$$
\end{cor}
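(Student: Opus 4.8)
The plan is to read the statement off from Theorem~\ref{thm:splitting abstract}, applied to the skew power series ring $B=R[[G]]$ over the coefficient ring $A=R[[H]]$. First I would invoke the preceding proposition, which presents $R[[G]]$ as a left and right pseudocompact skew power series ring over $R[[H]]$ with $t=\gamma-1$, $\sigma(a)=\gamma a\gamma^{-1}$ and $\delta(a)=\sigma(a)-a$ for $a\in R[[H]]$. The essential point is that the defining automorphism $\sigma$ is already inner on the whole of $B$: the topological generator $\gamma\in\Gamma\subset G$ is a unit of $R[[G]]$, and by construction $\sigma(a)=\gamma a\gamma^{-1}$ for every $a\in R[[H]]$. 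Hence the hypothesis of Theorem~\ref{thm:splitting abstract} --- that there be a unit $\gamma\in B$ with $\sigma(a)=\gamma a\gamma^{-1}$ for all $a\in A$ --- is met, with exactly the same $\gamma$.

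Granting this, Theorem~\ref{thm:splitting abstract} provides, for each $n\geq 0$, a short split-exact sequence
$$
0\mto \KTh_{n+1}(R[[G]])\mto \KTh_{n+1}(\cat{TSP}_{R[[H]]}(R[[G]]))\mto \KTh_n(\cat{TSP}^{R[[H]]}(R[[G]]))\mto 0 .
$$
A split short exact sequence of abelian groups identifies the middle term with the direct sum of the outer two, which is precisely the claimed isomorphism. If one wants the splitting made explicit, Definition~\ref{defn:explicit splitting} furnishes the section $s_{R[[G]],\gamma}$, built from the completed base change along the bimodule isomorphism $B\isomorph B^{\sigma}$ (right multiplication by $\gamma^{-1}$) together with the exact sequence of Proposition~\ref{prop:the exact sequence}.

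I do not expect a genuine obstacle: the corollary is a direct specialisation of Theorem~\ref{thm:splitting abstract}, and no noetherian hypothesis on $A=R[[H]]$ is needed for that theorem (in contrast to Corollary~\ref{cor:splitting sps-rings with S}, which also identifies the Waldhausen $\KTh$-groups with those of $B_S$). The one bookkeeping point to check is the compatibility of notation --- that the element $\gamma$ fixed before the preceding proposition, used both as the topological generator of $\Gamma$ and in the skew power series data, is the same unit implementing $\sigma$ by conjugation --- and this is immediate from the formulas just recalled.
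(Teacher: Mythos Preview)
Your proposal is correct and matches the paper's own argument, which consists of nothing more than the reference ``See Theorem~\ref{thm:splitting abstract}.'' You have simply spelled out why that theorem applies---namely, that the preceding proposition exhibits $R[[G]]$ as a skew power series ring over $R[[H]]$ with $\sigma$ implemented by conjugation with the unit $\gamma\in R[[G]]$---which is exactly the intended specialisation.
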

\begin{proof}
See Theorem~\ref{thm:splitting abstract}.
\end{proof}

\begin{cor}
Suppose that $R$ is a commutative noetherian pseudocompact ring with $p\in R$ topologically nilpotent and that $G$ is a compact $p$-adic Lie  group. Then $R[[H]]$ and $R[[G]]$ are noetherian, the set $S$ of Definition~\ref{defn:can Ore set} is a left denominator set, and for any $n\geq 0$,
$$
\KTh_{n+1}(R[[G]]_S)\isomorph\KTh_{n+1}(R[[G]])\oplus \KTh_{n}(R[[G]],R[[G]]_S).
$$
\end{cor}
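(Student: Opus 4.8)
The plan is to reduce the statement to Corollary~\ref{cor:splitting sps-rings with S}, which already incorporates the passage from the Waldhausen-category formulation of Theorem~\ref{thm:splitting abstract} to the quotient-ring formulation via Theorem~\ref{thm:existence of Ore set} and Proposition~\ref{prop:WeibYao}. By the preceding proposition, $B=R[[G]]$ is a left and right pseudocompact skew power series ring over $A=R[[H]]$ with $t=\gamma-1$, $\sigma(a)=\gamma a\gamma^{-1}$ and $\delta(a)=\sigma(a)-a$; in particular $\gamma\in B^{\times}$ and $\sigma$ is the restriction to $A$ of an inner automorphism of $B$, so the hypothesis on $\gamma$ in Corollary~\ref{cor:splitting sps-rings with S} is automatic. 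What remains to be supplied is (i) the noetherianness of $A$ and $B$, and (ii) an open two-sided ideal $I\subseteq A$ which lies in the Jacobson radical and is stable under $\sigma$ and $\delta$.

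For (i): a closed subgroup of a compact $p$-adic Lie group is again a compact $p$-adic Lie group, so $H$ is one as well. The noetherianness of the completed group algebra of a compact $p$-adic Lie group with coefficients in a noetherian pseudocompact ring in which $p$ is topologically nilpotent is classical (for $R=\Int_p$ it goes back to Lazard; see also \cite{NSW:CohomNumFields}): picking a uniform open normal pro-$p$ subgroup $G_0$ and equipping $R[[G_0]]$ with the filtration built from an adic filtration on $R$ and the augmentation-ideal filtration along $G_0$, one finds that the associated graded ring is a polynomial ring in $\dim G$ variables over a noetherian ring, hence noetherian; since the filtration is complete and separated, $R[[G_0]]$ is noetherian, and $R[[G]]$ is noetherian because it is finite free over $R[[G_0]]$. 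The same argument applies to $H$, giving $R[[H]]$ noetherian.

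For (ii): since $A=R[[H]]$ is now known to be noetherian and $B=R[[G]]$ is itself a left and right pseudocompact ring, this is exactly the situation of Remark~\ref{rem:existence of stable I} --- $\sigma$ and $\delta$ extend to $B$, every two-sided ideal of $B$ is stable under them, the Jacobson radical of the noetherian ring $A$ is open, and the traces on $A$ of the open two-sided ideals of $B$ form a neighbourhood basis of $0$, so one picks an open two-sided ideal $J\subseteq B$ with $J\cap A$ inside the Jacobson radical and sets $I=J\cap A$. With (i) and (ii) at hand, Corollary~\ref{cor:splitting sps-rings with S} applies to $A$, $B$, $\gamma$ and $I$, yielding that $S$ is a left denominator set in $R[[G]]$ and $\KTh_{n+1}(R[[G]]_S)\isomorph\KTh_{n+1}(R[[G]])\oplus\KTh_n(R[[G]],R[[G]]_S)$ for all $n\geq 0$. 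The only step with real content is the noetherianness in (i) --- this is the one place where being a $p$-adic Lie group rather than an arbitrary profinite group is essential --- and I expect it to be the main obstacle, although it is a standard fact; everything else is a formal assembly of the results proved above.
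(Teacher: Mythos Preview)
Your proposal is correct and follows the same overall strategy as the paper: verify the hypotheses of Corollary~\ref{cor:splitting sps-rings with S} (noetherianness of $A=R[[H]]$, together with the existence of the ideal $I$ via Remark~\ref{rem:existence of stable I}), and then apply that corollary. The only substantive difference is in how noetherianness is established. You sketch a direct Lazard-style argument via a uniform open subgroup and the associated graded ring; the paper instead first reduces to $R$ local, then invokes Cohen's structure theorem to write $R$ as a quotient of $\mathcal{O}[[X_1,\dots,X_n]]$ for a complete DVR $\mathcal{O}$, thereby reducing to the case $R=\mathcal{O}$ (by absorbing the formal variables into the group as extra $\Int_p$-factors), and only then appeals to the known noetherianness of $\mathcal{O}[[G']]$ for a $p$-valuable open subgroup $G'$ (citing Schneider). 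Both routes are standard; the paper's reduction has the virtue of making explicit exactly which reference handles the noetherianness, whereas your graded argument for general $R$ is correct in outline but would need a little more care to pin down the filtration and its graded over an arbitrary noetherian pseudocompact coefficient ring.
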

\begin{proof}
Every commutative noetherian pseudocompact ring is a finite direct product of commutative noetherian pseudocompact local rings with the topology given by the powers of the maximal ideal \cite[Cor.~ 36.35]{Warner:TopRings}. Hence, we may suppose that $R$ is local. As a corollary of Cohen's structure theorem, we have
$$
R\isomorph\mathcal{O}[[X_1,\dots,X_n]]/I
$$
for a complete discrete valuation ring $\mathcal{O}$ with maximal ideal $p\mathcal{O}$, some integer $n\geq 0$, and some ideal $I$ (which is finitely generated and therefore closed) \cite[Ch.~IX, \S~2, Prop.~1, Thm.~3]{Bourbaki:CommAlg}. Hence, $R[[G]]$ is a factor ring of
$$
\mathcal{O}[[X_1,\dots,X_n]][[G]]\isomorph\mathcal{O}[[G\times\prod_{k=1}^n\Gamma]],\qquad \Gamma\isomorph\Int_p.
$$
So, we may assume that $R=\mathcal{O}$. By \cite[Thm.~27.1]{Schneider:LieGroups} we can find an open $p$-valuable subgroup $G'$ of $G$. By \cite[Thm.~33.4]{Schneider:LieGroups}, $\mathcal{O}[[G']]$ is noetherian. Since $\mathcal{O}[[G]]$ is finitely generated as left or right $\mathcal{O}[[G']]$-module, $\mathcal{O}[[G]]$ is also noetherian. Now apply Corollary~\ref{cor:splitting sps-rings with S} together with Remark~\ref{rem:existence of stable I}.
\end{proof}

If $R=\Int_p$, it is not hard to verify that our set $S$ coincides with Venjakob's canonical Ore set $S$ introduced in \cite{CFKSV}.

The following corollary gives a new and more conceptual proof of the central \cite[Thm.~6.1.(ii)]{Burns:MCinNCIwasawaTh+RelConj}. Moreover, we can dispose of the assumption that the coefficient ring $R$ is unramified over $\Int_p$. In particular, one can replace in \cite[Thm.~2.2,  Cor.~2.3]{Burns:MCinNCIwasawaTh+RelConj} the base ring $\Int_p$ by the valuation ring $\mathcal{O}$ of any finite extension field of $\Rat_p$.

Let $G=H\rtimes \Gamma$ be a $p$-adic Lie group as above. Write $\SQab(G)$ for the set of pairs $\tau=(U_\tau,J_\tau)$ with $U_\tau \subset G$ an open subgroup, $J_\tau\subset U_\tau\cap H$ a subgroup which is open in $H$ and normal in $U_\tau$ and such that $G_\tau=U_\tau/J_\tau$ is an abelian $p$-adic Lie group of rank $1$. Furthermore, let $d_\tau$ be the index of $HU_\tau$ in $G$ and $H_\tau$ be the image of $H\cap U_\tau$ in $G_\tau$. For each $\tau$, we also fix once and for all a $\gamma_{\tau}$ lying in a $p$-Sylow subgroup of $U_\tau$ such that $\gamma^{-d_\tau}\gamma_{\tau}\in H$. We write $\Gamma_\tau\isomorph \Int_p$ for the closed subgroup of $U_\tau$ topologically generated by $\gamma_\tau$ and identify it with its image in $G_\tau$. Note that $H_\tau$ is a commutative finite group and that the choice of $\gamma_\tau$ induces decompositions $U_\tau=U_\tau\cap H\rtimes \Gamma_{\tau}$ and $G_\tau=H_\tau\times \Gamma_\tau$.

From now on, we assume that $R$ is commutative, noetherian, and pseudocompact with $p\in R$ topologically nilpotent.
Since
$$
R[[G_\tau]]\ctensor_{R[[U_\tau]]}R[[G]]\isomorph R[H_\tau]\ctensor_{R[[J_\tau]]}R[[G]]
$$
as pseudocompact $R[H_\tau]$-$R[[G]]$-bimodules, the exact functor
$$
\cat{TSP}(R[[G]])\mto\cat{TSP}(R[[G_\tau]]),\qquad \cmplx{P}\mapsto R[[G_\tau]]\ctensor_{R[[U_\tau]]}\cmplx{P}
$$
induces exact functors
\begin{align*}
\cat{TSP}_{R[[H]]}(R[[G]])&\mto \cat{TSP}_{R[H_\tau]}(R[[G_\tau]]),\\
\cat{TSP}^{R[[H]]}(R[[G]])&\mto \cat{TSP}^{R[H_\tau]}(R[[G_\tau]])
\end{align*}
by the first part of Prop.~\ref{prop:functoriality of splitting}. We will denote by $q_\tau$ the induced homomorphisms of $\KTh$-groups.

For each $\phi$  in the group of characters $\hat{H}_\tau$ of $H_\tau$, let $R[\phi]$ denote the finite extension of $R$ generated by the values of $\phi$. Consider the ring homomorphism
$$
R[[G_\tau]]=R[[\Gamma_\tau]][H_\tau]\mto R[\phi][[\Gamma_\tau]]
$$ which maps $h\in H_\tau$ to $\phi(h)$. Thus, $R[\phi][[\Gamma_\tau]]$ becomes a pseudocompact $R[\phi][[\Gamma_\tau]]$-$R[[G_\tau]]$-bimodule and we obtain an exact functor
$$
\cat{TSP}(R[[G_\tau]])\mto \cat{TSP}(R[\phi][[\Gamma_\tau]]),\qquad \cmplx{P}\mapsto R[\phi][[\Gamma_\tau]]\ctensor_{R[[G_\tau]]}\cmplx{P}.
$$
Since
$
R[\phi][[\Gamma_\tau]]\isomorph R[\phi]\ctensor_{R[H_\tau]}R[[G_\tau]]
$
as $R[\phi]$-$R[[G_\tau]]$-bimodules we also get exact functors
\begin{align*}
\cat{TSP}_{R[H_\tau]}(R[[G_\tau]])&\mto \cat{TSP}_{R[\phi]}(R[\phi][[\Gamma_\tau]]),\\
\cat{TSP}^{R[H_\tau]}(R[[G_\tau]])&\mto \cat{TSP}^{R[\phi]}(R[\phi][[\Gamma_\tau]]).
\end{align*}
We will denote the induced homomorphisms of $\KTh$-groups by $q_\phi$.

Recall that $R[\phi][\Gamma_\tau$ and $R[\phi][[\Gamma_\tau]]_S$ are semilocal commutative rings. Hence, the determinant induces isomorphisms
\begin{align*}
\KTh_1(\cat{TSP}(R[\phi][[\Gamma_\tau]]))&=\KTh_1(R[\phi][[\Gamma_\tau]])\isomorph R[\phi][[\Gamma_\tau]]^{\times}, \\
\KTh_1(\cat{TSP}^{R[\phi]}(R[\phi][[\Gamma_\tau]]))&=
\KTh_1(R[\phi][[\Gamma_\tau]]_{S})\isomorph R[\phi][[\Gamma_\tau]]_{S}^{\times}.
\end{align*}
Consider the homomorphisms
\begin{align*}
\Delta\colon& \KTh_1(R [[G]])\mto\prod_{\tau\in\SQab(G)}\prod_{\phi\in\hat{H}_\tau} R[\phi][[\Gamma_\tau]]^{\times}\\
\Delta_S\colon&\KTh_1(R[[G]]_S)\mto\prod_{\tau\in\SQab(G)}\prod_{\phi\in\hat{H}_\tau} R[\phi][[\Gamma_\tau]]_{S}^{\times}\\
\end{align*}
whose $(\tau,\phi)$-component is $q_\phi\comp q_\tau$ and note that they agree with the maps $\Delta_{\mathcal{O},G}$ and $\Delta_{\mathcal{O},G,S}$ from \cite[Thm.~6.1]{Burns:MCinNCIwasawaTh+RelConj} in the case that $R=\mathcal{O}$ is the valuation ring of a finite extension of $\Rat_p$. (We also note that they depend on the splittings $G_\tau=H_\tau\times \Gamma_\tau$ induced by the choices of the generators $\gamma_\tau$.)

\begin{cor}
Let $G=H\rtimes \Gamma$ be a $p$-adic Lie group and $R$ be any commutative, noetherian, pseudocompact ring such that $p\in R$ is topologically nilpotent. Then
$$\im \Delta_S\cap\prod_{\tau\in\SQab(G)}\prod_{\phi\in\hat{H}_\tau} R[\phi][[\Gamma_\tau]]^{\times}=\im \Delta.$$
\end{cor}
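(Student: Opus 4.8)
The plan is to deduce the equality from a morphism of \emph{split} short exact localisation sequences together with a short diagram chase. First I would set up the two rows. By Corollary~\ref{cor:splitting sps-rings with S} (and Remark~\ref{rem:existence of stable I}), applied to $G$ and to each $\Gamma_\tau$, the rings $R[[G]]$ and $R[\phi][[\Gamma_\tau]]$ are noetherian, the canonical sets $S$ of Definition~\ref{defn:can Ore set} are left denominator sets, and by Proposition~\ref{prop:WeibYao} we may identify $\KTh_n(\cat{TSP}_\bullet(\bullet))$ with $\KTh_n(\bullet_S)$ and $\KTh_n(\cat{TSP}^\bullet(\bullet))$ with the relative $\KTh$-groups. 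Theorem~\ref{thm:splitting abstract} and Definition~\ref{defn:explicit splitting} then provide the split short exact top row
$$
0\mto\KTh_1(R[[G]])\xrightarrow{j}\KTh_1(R[[G]]_S)\xrightarrow{\del_0}\KTh_0(R[[G]],R[[G]]_S)\mto 0
$$
with section $s=s_{R[[G]],\gamma}$, and similarly, for each pair $(\tau,\phi)$, the split short exact sequence of the skew power series ring $R[\phi][[\Gamma_\tau]]$ over $R[\phi]$ (with $\sigma=\id$ and unit $\gamma_\tau$); since $R[\phi][[\Gamma_\tau]]$ and $R[\phi][[\Gamma_\tau]]_S$ are semilocal, the determinant identifies the first two terms with the unit groups, and taking the product over all $(\tau,\phi)$ gives a split short exact bottom row with section $\bar s=\prod_{\tau,\phi}s_{R[\phi][[\Gamma_\tau]],\gamma_\tau}$.

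Next I would check that $q_\phi\comp q_\tau$ defines a morphism from the top row to the bottom row which moreover respects the sections $s$ and $\bar s$. Compatibility with the localisation maps $j$ and the boundary maps $\del_0$ (and their analogues) is functoriality of the Waldhausen localisation sequence; the resulting left vertical map is then $\Delta$ and the middle one is $\Delta_S$ by the very definition of these maps. For compatibility with the sections I would invoke Proposition~\ref{prop:functoriality of splitting}; a direct application to $R[[G]]\to R[[G_\tau]]$ is obstructed by the index $d_\tau$, so the idea is to interpose the group $U'=HU_\tau=H\rtimes\langle\gamma^{d_\tau}\rangle$ and to factor $q_\phi\comp q_\tau$ as
$$
\KTh(R[[G]])\mto\KTh(R[[U']])\mto\KTh(R[[U_\tau]])\mto\KTh(R[[G_\tau]])\mto\KTh(R[\phi][[\Gamma_\tau]]).
$$
Here the first arrow is restriction along $R[[U']]\subset R[[G]]$, covered by condition (2) of Proposition~\ref{prop:functoriality of splitting} with $k=d_\tau$ (both rings being skew power series rings over $R[[H]]$, with $t=\gamma-1$ and $t'=\gamma^{d_\tau}-1$); the second is restriction along $R[[U_\tau]]\subset R[[U']]$ and the last two are completed base changes, all three covered by condition (1) with sub-bimodules $N=R[[H]]$, $N=R[H_\tau]$ and $N=R[\phi]$ respectively. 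The relations among $\gamma,\gamma_\tau,t,t_\tau$ required by condition (1) follow from $\gamma^{-d_\tau}\gamma_\tau\in H$, from the normality of $H$, and from the decomposition $G_\tau=H_\tau\times\Gamma_\tau$, which in particular makes $\gamma_\tau$ central in $R[[G_\tau]]$. This yields $\Delta_S\comp s=\bar s\comp\Delta_0$, where $\Delta_0$ is the induced map on the relative $\KTh_0$-groups.

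The diagram chase is then short. The inclusion $\im\Delta\subseteq\im\Delta_S\cap\prod_{\tau,\phi}R[\phi][[\Gamma_\tau]]^\times$ is immediate from $j_\bullet\comp\Delta=\Delta_S\comp j$ and the fact that $\Delta$ takes values in $\prod_{\tau,\phi}R[\phi][[\Gamma_\tau]]^\times$. Conversely, let $x=\Delta_S(y)$ with $x\in\prod_{\tau,\phi}R[\phi][[\Gamma_\tau]]^\times=\ker\del_{0,\bullet}$. Using the section $s$, write $y=j(w)+s(\del_0 y)$ with $w\in\KTh_1(R[[G]])$; then $x=j_\bullet(\Delta(w))+\bar s(\Delta_0(\del_0 y))$, and since both $x$ and $j_\bullet(\Delta(w))$ lie in $\ker\del_{0,\bullet}$, so does $\bar s(\Delta_0(\del_0 y))$. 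Applying $\del_{0,\bullet}$, which is a retraction of $\bar s$, gives $\Delta_0(\del_0 y)=0$, hence $\bar s(\Delta_0(\del_0 y))=0$ and $x=j_\bullet(\Delta(w))\in\im\Delta$.

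The hard part will be the middle paragraph: constructing the four-term factorisation of $q_\phi\comp q_\tau$ through $U'$ and $U_\tau$ and verifying the bimodule hypotheses of Proposition~\ref{prop:functoriality of splitting}, i.e.\ exhibiting the sub-bimodules $N$ and checking the displayed compatibility relations for $\gamma,\gamma_\tau,t,t_\tau$. Once the sections are known to be respected, the identification of the two rows and the diagram chase above are routine.
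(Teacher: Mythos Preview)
Your proposal is correct and follows essentially the same route as the paper. The paper also reduces to showing that the sections $s_{G,\gamma}$ and $(s_{\Gamma_\tau,\gamma_\tau})$ are intertwined by the maps $q_\phi\comp q_\tau$, and proves this via the same intermediate group $V_{d_\tau}=HU_\tau$ (your $U'$), applying condition (2) of Proposition~\ref{prop:functoriality of splitting} for the restriction $R[[G]]\to R[[V_{d_\tau}]]$ and condition (1) for the remaining steps; the only cosmetic difference is that the paper merges your steps two and three into a single application of condition (1) with bimodule $M=R[[G_\tau]]\ctensor_{R[[U_\tau]]}R[[V_{d_\tau}]]$ and $N=R[[H_\tau]]\ctensor_{R[[U_\tau\cap H]]}R[[H]]$, and leaves the final diagram chase implicit.
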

\begin{proof}
We use the explicit description of the splitting
$$
s=s_{G,\gamma}\colon\KTh_0(\cat{TSP}^{R[[H]]}(R[[G]]))
  \mto\KTh_1(\cat{TSP}_{R[[H]]}(R[[G]]))
$$
given in Definition~\ref{defn:explicit splitting}. It suffices to prove that the diagram
$$
\xymatrix{
\KTh_0(\cat{TSP}^{R[[H]]}(R[[G]]))\ar[r]^{s_{G,\gamma}}\ar[d]^{(q_\phi\comp q_\tau)}&
\KTh_1(\cat{TSP}_{R[[H]]}(R[[G]]))\ar[d]^{\Delta_S}\\
\prod_{\tau,\phi}\KTh_0(\cat{TSP}^{R[\phi]}(R[\phi][[\Gamma_\tau]]))\ar[r]^{(s_{\Gamma_\tau,\gamma_\tau})}&
\prod_{\tau,\phi}\KTh_1(\cat{TSP}_{R[\phi]}(R[\phi][[\Gamma_\tau]]))}
$$
commutes.

For each integer $d$, we let $V_d$ denote the subgroup of $G$ topologically generated by $H$ and $\gamma^d$ and let $q_d$ denote the homomorphisms of $\KTh$-groups resulting from the application of Prop.~\ref{prop:functoriality of splitting} to the $R[[V_d]]$-$R[[G]]$-bimodule $M=R[[G]]$ and the $R[[H]]$-$R[[H]]$-subbimodule $N=R[[H]]$. Since $M$ and $N$ satisfy condition $(2)$ of Prop.~\ref{prop:functoriality of splitting}, we know that each $q_d$ commutes with $s$.

Now $U_{\tau}\subset V_{d_\tau}$ for each $\tau\in\SQab(G)$ and we may write $q_{\tau}=q'_{\tau}\comp q_{d_\tau}$ with $q'_{\tau}$ corresponding to the $R[[G_\tau]]$-$R[[V_d]]$-bimodule
$$
M=R[[G_\tau]]\ctensor_{R[[U_\tau]]}R[[V_{d_\tau}]]
$$
and the $R[[H_\tau]]$-$R[[H]]$-subbimodule
$$
N=R[[H_\tau]]\ctensor_{R[[U_\tau\cap H]]}R[[H]].
$$
Since $M$, $N$, $\gamma_1=\gamma^{d_{\tau}}$, $\gamma_2=\gamma_{\tau}$, $t_i=\gamma_i-1$ ($i=1,2$) satisfy condition $(1)$ of Prop.~\ref{prop:functoriality of splitting} we conclude that $q'_{\tau}$ commutes with $s$. Likewise, we also see that $q_{\phi}$ commutes with $s$ for each $\phi\in \hat{H}_\tau$. The commutativity of the above diagram follows.
\end{proof}

\begin{rem}
The above corollary also holds for arbitrary pro-finite groups $G=H\rtimes \Gamma$ and for non-noetherian $R$ if one formulates it directly in terms of the $\KTh$-groups $\KTh_1(\cat{TSP}_{R[[H]]}(R[[G]]))$ and $\KTh_0(\cat{TSP}^{R[[H]]}(R[[G]]))$.
\end{rem}

We will now assume that $R=\mathcal{O}$ is a complete discrete valuation ring of characteristic $0$ and residue characteristic $p$. In \cite{CFKSV}, the authors also consider the left denominator set
$$
S^*=\bigcup_{n=0}^{\infty}p^nS.
$$
By the same method as in the proof of Theorem~\ref{thm:splitting abstract}, we obtain the following result.

\begin{thm}
Assume that $\mathcal{O}$ is a complete discrete valuation ring of characteristic $0$ residue characteristic $p$ and that $G$ is a compact $p$-adic Lie group. Then
$$
\KTh_{n+1}(\mathcal{O}[[G]]_{S^*})\isomorph\KTh_{n+1}(\mathcal{O}[[G]][\frac{1}{p}])\oplus \KTh_{n}(\mathcal{O}[[G]][\frac{1}{p}],\mathcal{O}[[G]]_{S^*})
$$
for $n\geq 0$ and
$$
\KTh_0(\mathcal{O}[[G]]_{S^*})\isomorph\KTh_0(\mathcal{O}[[G]][\frac{1}{p}]).
$$
\end{thm}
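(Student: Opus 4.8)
The plan is to run the argument of Theorem~\ref{thm:splitting abstract} after inverting $p$ everywhere. Write $A=\mathcal{O}[[H]]$ and $B=\mathcal{O}[[G]]=A[[t;\sigma,\delta]]$ with $t=\gamma-1$, $\sigma(a)=\gamma a\gamma^{-1}$, $\delta=\sigma-\id$, as established above for completed group rings; here $\gamma\in B^{\times}$. Since $G$ is a compact $p$-adic Lie group, $B$ is noetherian and the set $S$ of Definition~\ref{defn:can Ore set} is a left denominator set. The first point to record is that $\mathcal{O}[[G]]_{S^{*}}=B_{S}[\frac{1}{p}]=(B[\frac{1}{p}])_{\bar S}$, where $\bar S$ is the image of $S$ in $B[\frac{1}{p}]$; this $\bar S$ is a left denominator set because $S$ is one in $B$ and $\{p^{n}\}$ is central. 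Consequently the localisation homotopy fibre sequence \cite[Thm.~1.8.2]{ThTr:HAKTS+DC} attached to $B[\frac{1}{p}]\to\mathcal{O}[[G]]_{S^{*}}$ can be obtained by $p$-localising the fibre sequence $\cat{TSP}^{A}(B)\to\cat{TSP}(B)\to\cat{TSP}_{A}(B)$ of Theorem~\ref{thm:splitting abstract}: one gives each of these Waldhausen categories the same objects and cofibrations but the larger class of weak equivalences consisting of the maps whose cone has $p$-power-torsion cohomology, and writes $\cat{TSP}(B)_{(p)}$, $\cat{TSP}_{A}(B)_{(p)}$, $\cat{TSP}^{A}(B)_{(p)}$ for the results. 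Using Theorem~\ref{thm:existence of Ore set} to compare weak equivalences, the natural functors $\cat{SP}_{\{p^{n}\}}(B)\to\cat{TSP}(B)_{(p)}$ and $\cat{SP}_{S^{*}}(B)\to\cat{TSP}_{A}(B)_{(p)}$ are $\KTh$-equivalences by the approximation theorem \cite[Thm.~1.9.1]{ThTr:HAKTS+DC}, so their $\KTh$-groups are $\KTh_{n}(B[\frac{1}{p}])$ and $\KTh_{n}(\mathcal{O}[[G]]_{S^{*}})$ for $n>0$ by \cite{WeibYao:Localization}; and a dévissage as in the proof of Theorem~\ref{thm:existence of Ore set}, using that an $S^{*}$-torsion finitely generated $B$-module has finitely generated-over-$A$ quotient by its submodule of $p$-power-torsion elements, identifies the Thomason--Trobaugh fibre with $\cat{TSP}^{A}(B)_{(p)}$ and its $\KTh$-groups with the relative groups $\KTh_{n}(B[\frac{1}{p}],\mathcal{O}[[G]]_{S^{*}})$ for $n\geq 0$.

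Next I would repeat the proof of Theorem~\ref{thm:splitting abstract} essentially verbatim. The completed tensor product $\cmplx P\mapsto B\ctensor_{A}\cmplx P$ remains a Waldhausen exact functor $\cat{TSP}^{A}(B)_{(p)}\to\cat{TSP}(B)_{(p)}$: it is Waldhausen exact for the original weak equivalences by Lemma~\ref{lem:completed base change for perfect complexes}, and, since $B$ is noetherian and $B\ctensor_{A}-$ preserves the derived tensor product, it sends a map with $p$-power-torsion cone to a map with $p$-power-torsion cone. The short exact sequence $(\ref{eq:exact sequence})$ of Proposition~\ref{prop:the exact sequence}, built from the unit $\gamma$, is an admissible short exact sequence in $\cat{TSP}(B)_{(p)}$, so the additivity theorem \cite[Cor.~1.7.3]{ThTr:HAKTS+DC} forces the inclusion $\cat{TSP}^{A}(B)_{(p)}\to\cat{TSP}(B)_{(p)}$ to induce the zero map on $\Kspace$, and by \cite[Thm.~6.8]{Bredon:TopologyAndGeometry} the fibre sequence splits into short split exact sequences
$$
0\mto\KTh_{n+1}(\mathcal{O}[[G]][\tfrac{1}{p}])\mto\KTh_{n+1}(\mathcal{O}[[G]]_{S^{*}})\mto\KTh_{n}(\mathcal{O}[[G]][\tfrac{1}{p}],\mathcal{O}[[G]]_{S^{*}})\mto 0
$$
for $n\geq 0$, which is the first assertion. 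For the second, I note that the localisation long exact sequence for $\mathcal{O}[[G]][\frac{1}{p}]\to\mathcal{O}[[G]]_{S^{*}}$ ends in
$$
\KTh_{1}(\mathcal{O}[[G]]_{S^{*}})\xrightarrow{\del_{0}}\KTh_{0}(\mathcal{O}[[G]][\tfrac{1}{p}],\mathcal{O}[[G]]_{S^{*}})\mto\KTh_{0}(\mathcal{O}[[G]][\tfrac{1}{p}])\mto\KTh_{0}(\mathcal{O}[[G]]_{S^{*}})\mto 0,
$$
and that $\del_{0}$ is surjective by the $n=0$ case just established; hence $\KTh_{0}(\mathcal{O}[[G]]_{S^{*}})\isomorph\KTh_{0}(\mathcal{O}[[G]][\frac{1}{p}])$.

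I expect the genuinely new work to be confined to the first paragraph: checking that $p$-localising the Waldhausen categories is compatible with the localisation fibre sequence, that it computes the $\KTh$-groups of $\mathcal{O}[[G]][\frac{1}{p}]$ and of $\mathcal{O}[[G]]_{S^{*}}$ (with care in low degrees, where the $\cat{TSP}_{A}$-type categories are not known to have the expected $\KTh_{0}$), and in particular the dévissage identifying the fibre with $\cat{TSP}^{A}(B)_{(p)}$ — this is the analogue, for $S^{*}$ in place of $S$, of Theorem~\ref{thm:existence of Ore set}. By contrast the homotopy-theoretic core — the additivity computation with the sequence $(*)$ and the appeal to Bredon's theorem — is word for word the same as in Theorem~\ref{thm:splitting abstract}, which is why only ``the same method'' needs to be invoked.
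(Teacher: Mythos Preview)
Your core strategy---apply additivity to the exact sequence of Proposition~\ref{prop:the exact sequence} and then invoke \cite[Thm.~6.8]{Bredon:TopologyAndGeometry}---is exactly what the paper does. The set-up, however, is genuinely different, and the paper's choice avoids the difficulty you flag.

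The paper does \emph{not} $p$-localise the Waldhausen categories $\cat{TSP}^{A}(B)$, $\cat{TSP}(B)$, $\cat{TSP}_{A}(B)$. Instead it first observes that $\mathcal{O}[[G]][\tfrac{1}{p}]$ and $\mathcal{O}[[G]]_{S^{*}}$ are noetherian and Auslander regular (via \cite[Thm.~33.4]{Schneider:LieGroups} and the argument of \cite[Prop.~4.3.4]{FK:CNCIT}), so that Quillen's $G$-theory localisation sequence for the abelian categories $\cat{M}(\mathcal{O}[[G]][\tfrac{1}{p}])_{S^{*}}\to\cat{M}(\mathcal{O}[[G]][\tfrac{1}{p}])\to\cat{M}(\mathcal{O}[[G]]_{S^{*}})$ computes the desired $\KTh$-groups. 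The $\KTh_{0}$-statement then comes for free from the tail of Quillen's sequence. For the splitting, the paper works directly with modules: given a finitely generated $S^{*}$-torsion $\mathcal{O}[[G]][\tfrac{1}{p}]$-module $M$, it chooses generators to produce a $p$-torsion-free finitely generated $\mathcal{O}[[G]]$-submodule $M'\subset M$ with $M'[\tfrac{1}{p}]=M$; this $M'$ is $S$-torsion, hence finitely generated over $\mathcal{O}[[H]]$, and Proposition~\ref{prop:the exact sequence} applied to $M'$ and then inverted at $p$ yields a functorial short exact sequence $0\to\mathcal{O}[[G]][\tfrac{1}{p}]\tensor_{\mathcal{O}[[H]][\frac{1}{p}]}M\to\mathcal{O}[[G]][\tfrac{1}{p}]\tensor_{\mathcal{O}[[H]][\frac{1}{p}]}M\to M\to 0$ in $\cat{M}(\mathcal{O}[[G]][\tfrac{1}{p}])$, independent of the chosen generators. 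Additivity plus Bredon then finishes as in Theorem~\ref{thm:splitting abstract}.

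In your approach the r\^ole of regularity is played by the d\'evissage identifying the Thomason--Trobaugh fibre (the $S^{*}$-torsion complexes with $p$-torsion weak equivalences) with your $\cat{TSP}^{A}(B)_{(p)}$ (the $S$-torsion complexes with the same weak equivalences). This step is essential---without it the functor $B\ctensor_{A}-$ is not even defined on the fibre, since an $S^{*}$-torsion complex need not be perfect over $A$---and you are right that it is where the work lies; but it is not the route the paper takes, and your sketch does not actually carry it out. (Incidentally, your uniform description of the three $_{(p)}$-categories as ``same objects and cofibrations, weak equivalences the maps with $p$-power-torsion cone'' cannot be what you intend for $\cat{TSP}_{A}(B)_{(p)}$: there the weak equivalences must be the maps with $S^{*}$-torsion cone.) The paper's use of regularity trades this d\'evissage for a standard identification $\KTh\cong G$ and lets one work with a single module at a time; your route would, if completed, have the advantage of not requiring regularity.
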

\begin{proof}
Since $\mathcal{O}[[G]]$ is noetherian, the rings $A=\mathcal{O}[[G]][\frac{1}{p}]$ and $B=\mathcal{O}[[G]]_{S^*}$ are also noetherian. By \cite[Thm.~33.4]{Schneider:LieGroups}  $\mathcal{O}[[G']]$ is Auslander regular for any open $p$-valuable subgroup $G'$ of $G$. We may apply the argument of \cite[Prop.~4.3.4]{FK:CNCIT} to deduce that $A$ is also Auslander regular. Hence, the same is true for $B$. Let $\cat{M}(R)$ denote the abelian category of finitely generated modules over any noetherian ring $R$. Further, we let $\cat{M}(A)_{S^*}$ denote the abelian category of finitely generated, $S^*$-torsion modules over $A$. We then have isomorphisms
\begin{align*}
\KTh_n(A)&\isomorph \KTh_n(\cat{M}(A)),\\
\KTh_n(B)&\isomorph \KTh_n(\cat{M}(B)),\\
\KTh_n(A,B)&\isomorph \KTh_n(\cat{M}(A)_{S^*})
\end{align*}
for all $n\geq 0$. The localisation sequence for Quillen $G$-theory shows that the map $\KTh_0(\cat{M}(A))\mto\KTh_0(\cat{M}(B))$ is a surjection.

Let now $M$ be a module in $\cat{M}(A)_{S^*}$ and fix a system of generators $m_1,\dots,m_n$. Define
$$
M'=\sum_{i=1}^n \mathcal{O}[[G]]m_i.
$$
The $\mathcal{O}[[G]]$-module $M'$ is $p$-torsion-free and $S^*$-torsion, hence $S$-torsion and therefore, finitely generated as $\mathcal{O}[[H]]$-module. From Prop.~\ref{prop:the exact sequence} we obtain the exact sequence of finitely generated $\mathcal{O}[[G]]$-modules
$$
0\mto \mathcal{O}[[G]]\tensor_{\mathcal{O}[[H]]}M'\xrightarrow{\kappa}\mathcal{O}[[G]]\tensor_{\mathcal{O}[[H]]}M'\xrightarrow{\mu}M'\mto 0.
$$
Inverting $p$ we obtain a corresponding exact sequence for $M$ which does not depend on the choice of the generators. By the Waldhausen additivity theorem and by \cite[Thm.~6.8]{Bredon:TopologyAndGeometry} we conclude as in the proof of Theorem~\ref{thm:splitting abstract} that the localisation sequence splits into short split exact sequences.
\end{proof}

The following corollary extends \cite[Prop. 3.4]{CFKSV}.

\begin{cor}
Assume that $G$ is a compact $p$-adic Lie group. Then
$$
\KTh_0(\mathcal{O}[[G]])\mto
\KTh_0(\mathcal{O}[[G]][\frac{1}{p}])=\KTh_0(\mathcal{O}[[G]]_{S^*})
$$
is a split injection. Hence,
$$
\del_0\colon\KTh_1(\mathcal{O}[[G]]_{S^*})\mto\KTh_0(\mathcal{O}[[G]],\mathcal{O}[[G]]_{S^*})
$$
is a surjection.
\end{cor}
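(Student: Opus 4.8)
The plan is to reduce the split injectivity of $\KTh_0(\mathcal{O}[[G]])\to\KTh_0(\mathcal{O}[[G]][\frac1p])$ to a classical statement about the integral group algebra of a finite quotient of $G$, which avoids having to analyse the ring $\mathcal{O}[[G]][\frac1p]$ directly. Since $G$ is a $p$-adic Lie group it possesses an open normal pro-$p$ subgroup $N$; put $\overline{G}=G/N$, let $\pi\colon G\qto\overline{G}$ be the projection and $K=\mathrm{Frac}(\mathcal{O})$. Then $\pi$ induces a commutative square of ring homomorphisms relating $\mathcal{O}[[G]]\to\mathcal{O}[[G]][\frac1p]$ to $\mathcal{O}[\overline{G}]\to\mathcal{O}[\overline{G}][\frac1p]=K[\overline{G}]$. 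Every simple $\mathcal{O}[[G]]$-module is finite, hence a $k[[G]]$-module, and its restriction to the complete local ring $k[[N]]$ has a nonzero $N$-fixed socle; as $N$ is normal, $N$ therefore acts trivially on every simple module, so the kernel $J$ of $\mathcal{O}[[G]]\qto\mathcal{O}[\overline{G}]$ is contained in $\mathrm{Jac}(\mathcal{O}[[G]])$. Because $\mathcal{O}[[G]]$ is finitely generated over the noetherian ring $\mathcal{O}[[N]]$ and is $J$-adically complete, idempotents and finitely generated projectives lift along $\mathcal{O}[[G]]\qto\mathcal{O}[\overline{G}]$, so $\pi$ induces an isomorphism $\KTh_0(\mathcal{O}[[G]])\wto\KTh_0(\mathcal{O}[\overline{G}])$. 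Since a map is split injective as soon as some composite with it is, it now suffices to show that $\KTh_0(\mathcal{O}[\overline{G}])\to\KTh_0(K[\overline{G}])$ is a split injection.

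For this I would invoke modular representation theory of $\overline{G}$. The algebra $K[\overline{G}]$ is semisimple, so $\KTh_0(K[\overline{G}])\isomorph R_K(\overline{G})$ is free of finite rank, and $\KTh_0(\mathcal{O}[\overline{G}])\isomorph\KTh_0(k[\overline{G}])$ is free of finite rank on the classes $[P_\lambda]$ of the projective indecomposables; the map carries $[P_\lambda]$ to the $K$-character of $K\otimes_{\mathcal{O}}P_\lambda$. It is injective because the Cartan matrix of $k[\overline{G}]$ is nonsingular, and by Serre's characterisation of projective characters its image is exactly the subgroup of those virtual $K$-characters that vanish on all $p$-singular conjugacy classes. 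That subgroup is the kernel of the integral homomorphism $R_K(\overline{G})\to\prod_{c}\mathcal{O}$ obtained by evaluating characters at the $p$-singular classes $c$; being the kernel of a map to a torsion-free group it is a saturated sublattice of $R_K(\overline{G})$, hence a direct summand of this free abelian group. Combined with injectivity this gives the desired split injection. (This part is essentially \cite[Prop.~3.4]{CFKSV}; the only non-formal ingredients are nonsingularity of the Cartan matrix and Serre's criterion.)

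Finally, the last assertion follows formally once $\iota\colon\KTh_0(\mathcal{O}[[G]])\to\KTh_0(\mathcal{O}[[G]][\frac1p])$ is known to be injective. By the preceding theorem the localisation map identifies $\KTh_0(\mathcal{O}[[G]][\frac1p])$ with $\KTh_0(\mathcal{O}[[G]]_{S^*})$, so the composite $\KTh_0(\mathcal{O}[[G]])\to\KTh_0(\mathcal{O}[[G]]_{S^*})$ is injective; feeding this into the localisation sequence for the left denominator set $S^*$,
$$\KTh_1(\mathcal{O}[[G]]_{S^*})\xrightarrow{\del_0}\KTh_0(\mathcal{O}[[G]],\mathcal{O}[[G]]_{S^*})\mto\KTh_0(\mathcal{O}[[G]])\mto\KTh_0(\mathcal{O}[[G]]_{S^*}),$$
forces $\KTh_0(\mathcal{O}[[G]],\mathcal{O}[[G]]_{S^*})\to\KTh_0(\mathcal{O}[[G]])$ to vanish, so $\del_0$ is surjective. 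I expect the genuine obstacle of the whole argument to be the first step: the $p$-power-torsion perfect complexes over $\mathcal{O}[[G]]$ are $S^*$-torsion but not $S$-torsion and hence invisible to the completed tensor product construction of Theorem~\ref{thm:splitting abstract}, so what must be proved is precisely that their classes vanish in $\KTh_0(\mathcal{O}[[G]])$ — which is what the reduction to $\overline{G}$ achieves.
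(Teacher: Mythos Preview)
Your proof is correct and follows essentially the same route as the paper: reduce to a finite quotient $G/N$ by an open normal pro-$p$ subgroup using that the kernel of $\mathcal{O}[[G]]\to\mathcal{O}[G/N]$ lies in the Jacobson radical (so $\KTh_0$ is unchanged), then invoke Serre's result that $\KTh_0(\mathcal{O}[\overline{G}])\to\KTh_0(K[\overline{G}])$ is a split injection, and finally read off the surjectivity of $\del_0$ from the localisation sequence. The paper does exactly this, only citing the relevant references (\cite[Prop.~3.2]{Witte:MCVarFF}, \cite[Thm.~25.3]{Lam:FirstCourseNoncomRings}, \cite[Ch.~16, Thm.~34]{Serre:LRFG}) rather than sketching the arguments as you do. One small caveat: your description of the image of $e$ via evaluation of characters at $p$-singular classes, and the claim that these values land in $\mathcal{O}$, presupposes that $K$ is a splitting field for $\overline{G}$; for general $\mathcal{O}$ it is cleaner to appeal directly to Serre's theorem that $\coker(e)$ is torsion-free, which is what the paper does.
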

\begin{proof}
Let $P$ be an open normal pro-$p$-subgroup of $G$. The kernel of
$\mathcal{O}[[G]]\mto\mathcal{O}[G/P]$ is contained in the Jacobson
radical of $\mathcal{O}[[G]]$ \cite[Prop. 3.2]{Witte:MCVarFF}. Hence,
$\KTh_0(\mathcal{O}[[G]])\mto\KTh_0(\mathcal{O}[G/P])$ is an isomorphism \cite[Thm.~25.3]{Lam:FirstCourseNoncomRings}. Let $K=\mathcal{O}[\frac{1}{p}]$ be the fraction field of $\mathcal{O}$ and consider the commutative diagram
$$
\xymatrix{
\KTh_0(\mathcal{O}[[G]])\ar[r]\ar[d]^{\isomorph}&\KTh_0(\mathcal{O}[[G]][\frac{1}{p}])\ar[d]\\
\KTh_0(\mathcal{O}[G/P])\ar[r]^{e}&\KTh_0(K[G/P])
}
$$
By \cite[Ch.~16, Thm.~34]{Serre:LRFG}, the homomorphism $e$ is split injective.
\end{proof}

The localisation sequence for $\mathcal{O}[[G]]\mto \mathcal{O}[[G]]_{S^*}$ does not split in general. For example, let $G=H\times \Int_p$ with $H$ a finite group. In this case, the kernel of
$$
\KTh_1(\Int_p[[G]])\xrightarrow{i} \KTh_1(\Int_p[[G]][\frac{1}{p}])\subset \KTh_1(\Int_p[[G]]_{S^*})
$$
contains the group $SK_1(\Int_p[H])$, which can be nontrivial if $p$ divides the order of $H$. (In fact, the other inclusion $\ker(i)\subset SK_1(\Int_p[H])$ is also true by \cite[Prop.~5.3]{Witte:NoncommutativeLFunctions}.)

It is tempting to hope that it does split if $G$ has no element of order $p$, but this seems to be difficult to prove even for commutative groups $G$, in which case it is related to the Gersten conjecture. For the first $\KTh$-group however, we can restate the following result obtained by Burns and Venjakob:

\begin{cor}
Let $\mathcal{O}$ be a discrete valuation ring of characteristic $0$ with a residue field $k$ of characteristic $p$. Assume that $G$ is a compact $p$-adic Lie group without $p$-torsion. Then
$$
\KTh_1(\mathcal{O}[[G]]_{S^*})\isomorph\KTh_1(\mathcal{O}[[G]])\oplus\KTh_0(\mathcal{O}[[G]],\mathcal{O}[[G]]_S)\oplus \KTh_0(k[[G]]).
$$
\end{cor}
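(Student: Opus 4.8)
The plan is to compare the two localisations through the factorisation $\mathcal{O}[[G]]\to\mathcal{O}[[G]]_S\to\mathcal{O}[[G]]_S[\frac{1}{p}]=\mathcal{O}[[G]]_{S^*}$ and to feed it into the localisation sequence of $G$-theory. Since $G$ has no element of order $p$, the rings $\mathcal{O}[[G]]$, $\mathcal{O}[[G]]_S$, $k[[G]]$ are noetherian and Auslander regular (as in the proof of the preceding theorem), and so is $k[[G]]_S:=\mathcal{O}[[G]]_S/p\,\mathcal{O}[[G]]_S$, which is the localisation of $k[[G]]$ at the image of $S$; in particular Quillen $\KTh$-theory agrees with $G$-theory for all of them. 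By Corollary~\ref{cor:splitting sps-rings with S} --- applicable because $\mathcal{O}[[G]]$ is noetherian and Remark~\ref{rem:existence of stable I} supplies the stable ideal --- one already has $\KTh_1(\mathcal{O}[[G]]_S)\isomorph\KTh_1(\mathcal{O}[[G]])\oplus\KTh_0(\mathcal{O}[[G]],\mathcal{O}[[G]]_S)$, so it suffices to produce a splitting $\KTh_1(\mathcal{O}[[G]]_{S^*})\isomorph\KTh_1(\mathcal{O}[[G]]_S)\oplus\KTh_0(k[[G]])$.

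First I would write down the localisation sequence for $\mathcal{O}[[G]]_S\to\mathcal{O}[[G]]_S[\frac{1}{p}]$. As $p$ is a non-zero-divisor in $\mathcal{O}[[G]]_S$, d\'evissage and regularity identify the relative term with $\KTh_\ast(\mathcal{O}[[G]]_S/p\,\mathcal{O}[[G]]_S)=\KTh_\ast(k[[G]]_S)$, so the sequence reads
$$
\KTh_1(k[[G]]_S)\xrightarrow{\tau_1}\KTh_1(\mathcal{O}[[G]]_S)\to\KTh_1(\mathcal{O}[[G]]_{S^*})
\xrightarrow{\partial}\KTh_0(k[[G]]_S)\xrightarrow{\tau_0}\KTh_0(\mathcal{O}[[G]]_S)\to\cdots.
$$
The crux is that the transfer maps $\tau_0,\tau_1$ vanish. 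I would derive this from the fact that $p\,\mathcal{O}[[G]]_S$ lies in the Jacobson radical of $\mathcal{O}[[G]]_S$: if $M$ is a finitely generated $\mathcal{O}[[G]]_S$-module with $pM=M$, choose a finitely generated $\mathcal{O}[[G]]$-module $N$ with $N\tensor_{\mathcal{O}[[G]]}\mathcal{O}[[G]]_S\isomorph M$; then $N/pN$ is $S$-torsion, hence finitely generated over $\mathcal{O}[[H]]$, so $N$ is finitely generated over $\mathcal{O}[[H]]$ by the topological Nakayama lemma (as in Theorem~\ref{thm:existence of Ore set}), forcing $M=0$. Consequently $GL_n(\mathcal{O}[[G]]_S)\to GL_n(k[[G]]_S)$ is surjective and idempotents lift, so any finitely generated projective $k[[G]]_S$-module $Q$ is of the form $\tilde Q/p\tilde Q$ with $\tilde Q$ finitely generated projective over $\mathcal{O}[[G]]_S$, and any automorphism of $Q$ lifts to one of $\tilde Q$; reading off the classes in $\KTh_0$ and $\KTh_1$ from $0\to\tilde Q\xrightarrow{p}\tilde Q\to Q\to 0$, and using the resolution theorem for the regular ring $k[[G]]_S$, one gets $\tau_0=\tau_1=0$. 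This yields a short exact sequence $0\to\KTh_1(\mathcal{O}[[G]]_S)\to\KTh_1(\mathcal{O}[[G]]_{S^*})\xrightarrow{\partial}\KTh_0(k[[G]]_S)\to 0$.

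It remains to replace $\KTh_0(k[[G]]_S)$ by $\KTh_0(k[[G]])$ and to split this sequence. For the first, I would run the localisation sequence for $k[[G]]\to k[[G]]_S$ and observe that its transfer $\KTh_0(\cat{M}(k[[G]])_S)\to\KTh_0(k[[G]])$ kills the class of every finitely generated $k[[G]]$-module $M$ which is finitely generated over $k[[H]]$: indeed Proposition~\ref{prop:the exact sequence}, applied to the skew power series ring $k[[G]]=k[[H]][[t;\sigma,\delta]]$, gives an exact sequence $0\to k[[G]]^{\sigma}\ctensor_{k[[H]]}M\to k[[G]]\ctensor_{k[[H]]}M\to M\to 0$ whose two left-hand terms are isomorphic as $k[[G]]$-modules via the bimodule isomorphism $k[[G]]\isomorph k[[G]]^{\sigma}$ induced by right multiplication with the image of $\gamma^{-1}$, so $[M]=0$; hence $\KTh_0(k[[G]])\isomorph\KTh_0(k[[G]]_S)$. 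Finally, for an open normal pro-$p$-subgroup $P$ the kernel of $k[[G]]\to k[G/P]$ lies in the Jacobson radical, so $\KTh_0(k[[G]])\isomorph\KTh_0(k[G/P])$, which is a free abelian group of finite rank; therefore the short exact sequence above splits and, combined with Corollary~\ref{cor:splitting sps-rings with S}, gives the asserted decomposition. The main obstacle is the vanishing of $\tau_0$ and $\tau_1$ --- equivalently, that $p\,\mathcal{O}[[G]]_S$ lies in the Jacobson radical of $\mathcal{O}[[G]]_S$ --- together with the input of Proposition~\ref{prop:the exact sequence}, which is exactly what forces the correction term to be $\KTh_0(k[[G]])$ rather than the a priori larger $\KTh_0(k[[G]]_S)$; the remaining steps are routine bookkeeping with localisation and d\'evissage.
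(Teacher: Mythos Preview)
Your argument is correct and complete. The paper's own proof is a one-line citation: it invokes \cite[Thm.~2.1]{BV:DescentTheory} (extended from $\Int_p$ to a general complete discrete valuation ring $\mathcal{O}$) for the decomposition $\KTh_1(\mathcal{O}[[G]]_{S^*})\isomorph\KTh_1(\mathcal{O}[[G]]_S)\oplus\KTh_0(k[[G]])$, and then appends the paper's new splitting result (Corollary~\ref{cor:splitting sps-rings with S}) to decompose $\KTh_1(\mathcal{O}[[G]]_S)$ further.

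What you have done is to unpack that citation and give a self-contained proof of the Burns--Venjakob step using only the machinery already developed in this paper. The two routes are mathematically the same: the factorisation $\mathcal{O}[[G]]_S\to\mathcal{O}[[G]]_{S^*}$, the $G$-theory localisation sequence with relative term $\KTh_\ast(k[[G]]_S)$ via d\'evissage and regularity, the vanishing of the transfers $\tau_0,\tau_1$ because $p$ lies in the Jacobson radical of $\mathcal{O}[[G]]_S$ (your Nakayama argument is exactly the right one), and the identification $\KTh_0(k[[G]])\isomorph\KTh_0(k[[G]]_S)$ via Proposition~\ref{prop:the exact sequence} --- this last point is precisely where the paper's central exact sequence re-enters, and it is the same mechanism Burns--Venjakob use. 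Your version has the advantage of being self-contained and of making transparent which hypotheses are actually used (no $p$-torsion is needed only for Auslander regularity, hence for $G=\KTh$); the paper's version is shorter but opaque without access to \cite{BV:DescentTheory}.

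One small remark: for $\tau_1=0$ your lifting-plus-additivity argument is correct, but it is worth noting explicitly that you are using the additivity theorem for $\KTh_1$ of the exact category $\cat{M}(\mathcal{O}[[G]]_S)$ applied to the short exact sequence of automorphisms $0\to(\tilde Q,\tilde\alpha)\xrightarrow{p}(\tilde Q,\tilde\alpha)\to(Q,\alpha)\to 0$, together with the fact that $\KTh_1(k[[G]]_S)$ is generated by classes of automorphisms of free modules.
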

\begin{proof}
This is \cite[Thm 2.1]{BV:DescentTheory} extended to general complete discrete valuation rings $\mathcal{O}$ of characteristic $0$ and residue characteristic $p$ combined with our new knowledge that $\KTh_1(\mathcal{O}[[G]])$ injects into $\KTh_1(\mathcal{O}[[G]]_S)$.
\end{proof}

\bibliographystyle{amsalpha}
\bibliography{Literature}
\end{document}